\documentclass[11pt,reqno]{amsart}
\usepackage{lmodern}
\usepackage[latin1]{inputenc}
\usepackage{amsfonts}
\usepackage{amssymb}
\usepackage{amsmath}
\usepackage{amsthm}
\usepackage{cite}
\usepackage{graphicx}
\usepackage{amscd}
\usepackage{color}
\usepackage{bm}
\usepackage{enumerate}
\usepackage[dvips]{epsfig}
\usepackage{psfrag}
\usepackage{epsfig}
\usepackage{comment}
\usepackage{hyperref} 
\usepackage{orcidlink}

\usepackage[abbrev,backrefs]{amsrefs}
\allowdisplaybreaks

\usepackage{cite}
\newtheorem{theorem}{Theorem}
\newtheorem{definition}[theorem]{Definition}
\newtheorem{corollary}[theorem]{Corollary}
\newtheorem{proposition}[theorem]{Proposition}
\newtheorem{lemma}[theorem]{Lemma}
\newtheorem{remark}[theorem]{Remark}

\numberwithin{theorem}{section}

\numberwithin{equation}{section}

\newcommand{\abs}[1]{\lvert#1\rvert}

\newcommand{\R}{\mathbb{R}}

\newcommand{\N}{\mathbb{N}}

\newcommand{\cB}{{\mathcal B}}

\newcommand{\cM}{{\mathcal M}}

\newcommand{\weak}{\rightharpoonup}

\newcommand{\B}{{\bf B}}

\newcommand{\eps}{\varepsilon}

\renewcommand{\phi}{\varphi}

\renewcommand{\epsilon}{\varepsilon}

\let\psfragfont\footnotesize
\let\psfragfonta\tiny

\psfrag{Oe}{\psfragfont$\Omega_\eps$}
\psfrag{z0}{\psfragfont$\tilde z_0$}
\psfrag{Ke}{\psfragfont$K_\eps$}
\psfrag{A}{\psfragfont$A_\eps$}
\psfrag{twoeps}{\psfragfont$2\eps$}
\psfrag{0}{\psfragfont$0$}
\psfrag{Y0}{\psfragfont$Y_0$}
\psfrag{rn-1}{\psfragfont$\R^{N-1}$}
\psfrag{rn+}{\psfragfont$\R^N_+$}
\psfrag{or}{\psfragfonta$0$}
\psfrag{u}{\psfragfont$u$}
\psfrag{u?}{\psfragfont$u$ ?}
\psfrag{e1}{\psfragfonta$e_1$}
\psfrag{Al}{\psfragfonta$A_\lambda?$}
\psfrag{-e1}{\psfragfonta$-e_N$}
\psfrag{B}{\psfragfont$\B$}
\psfrag{k}{\psfragfonta$K$}
\psfrag{eh}{\psfragfonta$e_H$}
\psfrag{alh}{\psfragfonta$\alpha_H$}
\psfrag{sigma}{\psfragfonta$\Sigma$}
\psfrag{om}{\psfragfonta$\Omega$}
\psfrag{om1}{\psfragfonta$\Omega_1$}
\psfrag{om2}{\psfragfonta$\Omega_2$}
\psfrag{dh}{\psfragfonta$T_\lambda$}
\psfrag{ph}{\psfragfonta$T_\lambda$}
\psfrag{h}{\psfragfonta$K_\lambda$}
\psfrag{x}{\psfragfont$x$}
\psfrag{z}{\psfragfont$z$}
\psfrag{xl}{\psfragfont$x^\lambda$}
\psfrag{xk}{\psfragfonta$y_k$}
\psfrag{ex}{\psfragfont$e$}
\psfrag{eh}{\psfragfont$e_H$}
\psfrag{y}{\psfragfont$y$}
\psfrag{o}{\psfragfont$0$}
\psfrag{x1}{\psfragfont$\overline x$}
\psfrag{x_1}{\psfragfont$x_1$}
\psfrag{y1}{\psfragfont$\overline y$}
\psfrag{yl}{\psfragfont$y^\lambda$}
\psfrag{lam}{\psfragfont$ $}
\psfrag{b}{\psfragfont$\B$}
\psfrag{pb}{\psfragfont$\partial \B$}
\psfrag{sr}{\psfragfont$S_r$}

\author[A. Cannone]{Alessandro Cannone \orcidlink{https://orcid.org/0009-0006-4073-8298} }
\address{\noindent Alessandro Cannone \newline
	Dipartimento di Matematica, Universit\`{a} degli Studi di Bari Aldo Moro,\newline
	Via Orabona 4, 70125 Bari, Italy.}\email{alessandro.cannone@uniba.it}

\author[M. Yu]{Meng Yu \orcidlink{https://orcid.org/0009-0003-8065-4863}}
\address{\noindent Meng Yu\newline Institut f\"ur Mathematik, Goethe-Universit\"at Frankfurt\newline Frankfurt am Main 60054, Germany}
\email{  yumeng161@mails.ucas.ac.cn}
\subjclass[2000]{35J50, 35Q40, 31A10}

\keywords{Trudinger-Moser inequality, logarithmic convolution potential, extremal functions, symmetry}
\begin{document}

	\title[Extremal functions for nonlocal interaction functionals in dimension $N$]{Blow-up analysis and extremal functions for nonlocal interaction functionals in dimension $N$}

\begin{abstract}
    In this paper we study Moser-Trudinger type inequalities for some nonlocal energy functionals in  presence of a logarithmic convolution potential, when the domain is a ball of $\R^N$ with $N \geq 2$. In particular, we perform a blow-up analysis to prove existence of extremal functions in the borderline case of critical growth. Using this, we extend the results in \cite{CiWeYu} to higher dimension and sharpen \cite{CC}.
\end{abstract}
\maketitle
\section{Introduction}
In the last century, the study of nonlocal interaction equations has attracted great interest. These kinds of problems have emerged from various applications in different contexts, ranging from vortex theory,
statistical dynamics of selfgravitating clouds \cite{Suzuki,W},  quantum theory for crystals 
\cite{Dolbeault-Perthame},  to the description of vortices in turbulent Euler flows \cite{CLMP}.

Recently, in \cite{CiWe2}, the authors established Trudinger-Moser inequalities in the presence of a logarithmic kernel in the planar case, characterizing the critical non-linear growth rates for these inequalities. Then in \cite{CC} it was generalized for any dimension $N\geq2$.\\ \noindent
The problem in the general case is to maximize the quantity\begin{equation}
    \label{maximization problem}
    m_1(N,F):=\sup\limits_{u\in \cB_1} \Phi(u),
\end{equation}
where\begin{equation}\label{nonlocal functional}
    \Phi(u):=\int_{B_1} \int_{B_1} \ln{\frac{1}{|x-y|}} F(u(x))F(u(y))\, dx\, dy,
\end{equation} is the nonlocal interaction functional and 
\begin{equation}\label{set of function}
    \cB_1:=\{u \in W^{1,N}_0(B_1)\, |\, |\nabla u|_N \leq 1\}
\end{equation} is the set of functions and $|\cdot|_N$ is the standard Lebesgue norm.\\
On the nonlinearity $F$ we have the following condition: 
\begin{itemize}
\item[$(F_0)$] $F: \R \to [0,\infty)$ is even and continuous  on $[0,\infty)$. Moreover, there exist constants $\alpha,c>0$ with 
	\begin{equation}
		\label{eq:general-growth-condition}
		F(t) \le c e^{\alpha |t|^{N/(N-1)}}  \qquad \text{for $t \in \R$},
	\end{equation}
\end{itemize}which ensures that the double integral in \eqref{nonlocal functional} is well-defined for functions $u \in \mathrm{W}^{1,N}_0(B_1)$. Indeed, splitting the kernel $\ln \frac{1}{| \cdot |}$ into its positive and negative part and defining the functionals $\Phi^{\pm} \colon \mathcal{M}(B_1) \to [0, \infty]$ by
\begin{equation*}
    \Phi^{\pm}(u) = \int_{B_1} \int_{B_1} \ln^{\pm} \frac{1}{|x - y|} F(u(x))F(u(y)) \, \text{d}x \, \text{d}y,
\end{equation*}
where $\ln^{\pm} = \max\{\pm \ln, 0\}$ and $\mathcal{M}(B_1)$ denotes the space of the real Lebesgue-measurable functions on $B_1$, it follows from \cite{CC} that $\Phi^{\pm}(u) < \infty$ for every $u \in \mathrm{W}^{1,N}_0(B_1)$, and therefore the quantity in \eqref{nonlocal functional} has a well-defined finite value
\begin{equation*}
    \Phi(u) := \int_{B_1} \int_{B_1} \ln \frac{1}{|x - y|} F(u(x))F(u(y)) \, \text{d}x \, \text{d}y = \Phi^{+}(u) - \Phi^{-}(u) \quad \text{for every } u \in \mathrm{W}^{1,N}_0(B_1).
\end{equation*}
In order to study the maximization problem  we stress that  the value 
 $\alpha_N:= N \omega_{N-1}^{1/(N-1)}$, where $\omega_{N-1}$ is the measure of the surface of the unit ball of $\R^N$, plays a key role.
We stress that $\omega_{N-1}$ tends to zero as the dimension goes to $+ \infty$.
\\ \noindent We begin by remarking that if  $F$ satisfies $(F_0)$ with $\alpha < \alpha_N$,  then 
\begin{equation}\label{rottt}
	m_1(F,N) := \sup_{u \in \cB_1} \Phi(u) <\infty
\end{equation}
by the classical Trudinger-Moser inequality  \cites{trudinger,moser} and the logarithmic Hardy-Littlewood-Sobolev inequality \cite[Theorem 2]{beckner}, as  shown in \cite{CC}.\\ \noindent Regarding the critical case $\alpha_N$, in \cite{CC}, the following theorem was proven under an additional condition on $F$, namely that it is increasing on $[0, +\infty)$.

\begin{theorem}\cite[Theorem 1.2]{CC}
	\label{sec:introduction-main-thm-C-F2}
	Suppose that $F$ satisfies $(F_0)$ and is increasing on $[0,+\infty)$.
\begin{enumerate}
		\item If $F$ has at most $\beta$-critical growth for some $\beta \le -\frac{N}{2(N-1)}$, then 
$$
m_1(N, F) := \sup_{u \in \cB_1} \Phi(u) <\infty.
$$
		\item If $F$ has at most $\beta$-critical growth for some $\beta < -\frac{N}{2(N-1)}$, then $m_1(N,F)$ is attained, and every maximizer for $\Phi$ in $\cB_1$ is, up to sign, a radial and radially decreasing function in $\cB_1$.\\  
\item If $F$ has at least $\beta$-critical growth for some $\beta>-\frac{N}{2(N-1)} $, then 
$m_1(N,F)= \infty$.
\end{enumerate}
\end{theorem}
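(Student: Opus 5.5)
The plan is to reduce to radial functions by symmetrization, to estimate the radial problem in Moser's logarithmic variables, and to argue by concentration--compactness for attainment. Throughout, "at most (resp. at least) $\beta$-critical growth" is understood as $F(t)\le c(1+|t|)^{\beta}e^{\alpha_N|t|^{N/(N-1)}}$ (resp. $\ge$ for large $|t|$).

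\emph{Symmetrization.} For $x,y\in B_1$ write
$$\ln\frac{1}{|x-y|}=\ln\frac{2}{|x-y|}-\ln 2 .$$
The kernel $\ln\frac{2}{|z|}$ is positive and strictly symmetric decreasing on $B_2\supset B_1-B_1$. The constant term contributes $-\ln 2\,(\int_{B_1}F(u))^2$, which is invariant under rearrangement. Since $F$ is even and increasing on $[0,\infty)$, we have $F(u^*)=(F(|u|))^*$. Riesz's rearrangement inequality then gives $\Phi(u)\le\Phi(u^*)$, and Pólya--Szegő gives $u^*\in\cB_1$. Hence we may restrict to radial, radially decreasing $u\ge0$.

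\emph{Radial upper bound.} For $N\ge3$, $\ln|\cdot|$ is subharmonic; for $N=2$ it is harmonic. Averaging over spheres therefore yields
$$\int_{\partial B_r}\ln\frac{1}{|x-y|}\,d\sigma(y)\le|\partial B_r|\,\ln\frac{1}{\max(|x|,r)}.$$
Consequently
$$\Phi(u)\le 2\int_{B_1}\ln\frac1{|x|}\,F(u(x))\int_{B_{|x|}}F(u(y))\,dy\,dx .$$
Next substitute $|x|=e^{-t/N}$ and $w(t)=\alpha_N^{(N-1)/N}u(x)$, so that $\int_0^\infty|w'|^N\,dt=|\nabla u|_N^N\le1$. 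The bound becomes, up to constants,
$$I(w)=\int_0^\infty t\,e^{-t}F(w(t))\int_t^\infty e^{-s}F(w(s))\,ds\,dt .$$
Hölder gives $w(t)\le t^{(N-1)/N}\delta_t^{1/N}$ with $\delta_t=\int_0^t|w'|^N$, and $w(s)\le w(t)+(s-t)^{(N-1)/N}(1-\delta_t)^{1/N}$. In the worst case $w\sim t^{(N-1)/N}$, so $e^{-t}F(w)\sim(1+t)^{\beta(N-1)/N}$ on a region of length $O(1)$. The product of the two integrals is then $\sim t\cdot t^{2\beta(N-1)/N}$, which stays bounded exactly when $\beta\le-\frac{N}{2(N-1)}$. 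The rigorous version splits into $\delta_t\le1-\eta$ (exponential gain) and $\delta_t$ close to $1$, where a Carleson--Chang type estimate on the tail controls the inner integral.

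\emph{Main obstacle.} I expect the hardest step to be the borderline case $\beta=-\frac N{2(N-1)}$ of (1). There the cancellation is exact, and one must show that the inner integral $\int_t^\infty e^{-s}F(w)\,ds$ is $O(e^{-t}F(w(t)))$ uniformly, using the Carleson--Chang lemma for the remaining energy $1-\delta_t$.

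\emph{Attainment and sharpness.} For (2), take a radial maximizing sequence $u_n\rightharpoonup u$ in $W^{1,N}_0(B_1)$. The same estimate with $\beta$ strictly below the threshold gives an extra factor $t^{-\epsilon}$. This shows that the contribution to $\Phi$ from regions where $w_n$ concentrates (large $t$) tends to $0$ uniformly. Away from the origin, dominated convergence applies, so $\Phi(u_n)\to\Phi(u)$. Since $\Phi(\lambda\phi)>0$ for small bumps $\phi$, we get $m_1>0$, which excludes $u=0$; thus $u$ is a maximizer. For the symmetry of every maximizer, equality in the strict Riesz inequality (Lieb), for the strictly decreasing kernel $\ln\frac2{|z|}$, forces $F(|u|)$ to be symmetric decreasing about the origin, since it is supported in $B_1$. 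Combined with maximality, which forces $|\nabla u|_N=1$ and $u$ not to change sign, this yields that $u$ is, up to sign, radial and radially decreasing.

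For (3), I test with the Moser functions $w_k(t)=t\,k^{-1/N}$ for $t\le k$ and $w_k=k^{(N-1)/N}$ for $t>k$. The lower growth bound gives inner integral $\gtrsim e^{-k}F(k^{(N-1)/N})\gtrsim k^{\beta(N-1)/N}$ for $t\approx k$, and a similar lower bound for the outer one. For radial functions the discarded sphere-average term is bounded below by $\ln\frac1{\max}-C$. The constant error term $-C(\int F(u_k))^2$ is bounded, because $\int F(u_k)$ stays bounded for these functions. Altogether $\Phi(u_k)\gtrsim k^{1+2\beta(N-1)/N}\to\infty$ whenever $\beta>-\frac N{2(N-1)}$.
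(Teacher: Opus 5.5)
The paper gives no proof of this theorem. It is quoted from \cite{CC}, so I compare your proposal with the in-paper analogues: Proposition~\ref{C-finiteness} (which defers to \cite[Prop.~4.2]{CC}), Proposition~\ref{C-infiniteness} and Section~\ref{sec:cont-prop-phi}.

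\textbf{The gap is in (1).} For this central part you give only a heuristic. The estimate you single out as decisive,
\[
\int_t^\infty e^{-s}F(w(s))\,ds=O\bigl(e^{-t}F(w(t))\bigr)\qquad\text{uniformly in $t$ and $w$},
\]
is false, even when $\delta_t$ is close to $1$. Here, as in your $I(w)$, $F(w)$ means $F(\kappa w)$ with $\kappa=\alpha_N^{-(N-1)/N}$. For a counterexample, take $F(\tau)=(1+|\tau|)^\beta e^{\alpha_N|\tau|^{N/(N-1)}}$ and $w(s)=s\,(2t)^{-1/N}$ on $[0,2t]$, constant afterwards; this $w$ has energy $1$.
\begin{itemize}
\item Then $w(t)^{N/(N-1)}=2^{-1/(N-1)}t$, so the right side is of order $e^{-(1-2^{-1/(N-1)})t}\,t^{\beta(N-1)/N}$.
\item The left side is at least $e^{-2t}F(w(2t))\sim t^{\beta(N-1)/N}$, which is exponentially larger.
\item Replacing $2t$ by $t/(1-\eta)$ gives the same failure with $\delta_t=1-\eta$.
\end{itemize}

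What is actually needed is much softer and uses no Carleson--Chang lemma. Since $w$ is nondecreasing and $\beta\le 0$, you can freeze the polynomial factor at $t$:
\[
\int_t^\infty e^{-s}F(w(s))\,ds\le c\,(1+\kappa w(t))^{\beta}\int_0^\infty e^{w^{N/(N-1)}(s)-s}\,ds\le c\,C_M\,(1+\kappa w(t))^{\beta},
\]
where $C_M$ is Moser's constant. Hence
\[
I(w)\le c^2C_M\int_0^\infty t\,e^{w^{N/(N-1)}(t)-t}\,(1+\kappa w(t))^{2\beta}\,dt .
\]
Now split the range of $t$:
\begin{itemize}
\item On $\{w^{N/(N-1)}(t)\ge t/2\}$ you have $t(1+\kappa w)^{2\beta}\le C(1+t)^{1+2\beta(N-1)/N}\le C$, precisely because $\beta\le-\frac{N}{2(N-1)}$. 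Moser's inequality then bounds this part.
\item Elsewhere the integrand is at most $te^{-t/2}$.
\end{itemize}
In this paper the Carleson--Chang bound \eqref{eq:cc-estimate} enters only in Section~\ref{sec:an-upper-bound}, for the sharp constant along concentrating sequences, not for finiteness.

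\textbf{Part (2)} rests on ``the same estimate'' and so inherits the gap. With the corrected bound restricted to $t\ge T$, you do get a uniform tail bound $\lesssim(1+T)^{1+2\beta(N-1)/N}+(2T+4)e^{-T/2}$ and uniform integrability of $F(u_n)$. That gives a route to $\Phi(u_n)\to\Phi(u)$ needing neither \cite[Theorem 1.6]{Lions} (used in the paper for $u\ne 0$) nor the $A_n^\pm$ splitting (used for $u=0$).

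Your sphere-average bound is sound, and for $N\ge 3$ it is the right tool. There $\ln\frac{1}{|x|}$ is not the Newtonian kernel. The average of $\ln\frac{1}{|x-y|}$ over $|y|=r$ lies between $\ln\frac{1}{\max(|x|,r)}-\ln 2$ and $\ln\frac{1}{\max(|x|,r)}$; for $N=3$ and $|x|=r$ it equals $\ln\frac1r-\ln 2+\frac12$. So \eqref{newton-1-argument} holds only as an upper bound. Consequently you must pass to the limit in $\Phi$ itself, not in your upper bound. This works: the discrepancy is $\iint k\,F(u(x))F(u(y))$ with $0\le k\le\ln 2$, and $F(u_n)\to F(u)$ in $L^1$.

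There are further problems in (2):
\begin{itemize}
\item The step ``$m_1>0$ excludes $u=0$'' is a non sequitur. $\Phi(0)=F(0)^2\int_{B_1}\int_{B_1}\ln\frac{1}{|x-y|}\,dx\,dy$ can be positive (e.g.\ $N=2$, $F(0)>0$).
\item That step is also unnecessary. Once $\Phi(u_n)\to\Phi(u)$, the weak limit is a maximizer even if $u=0$, exactly as in Propositions~\ref{abstract-strong-continuity} and~\ref{C-existence-of-maximizer}.
\item For the symmetry of every maximizer, Lieb's equality case only makes $F(|u|)1_{B_1}$ a translate of its rearrangement. Being supported in $B_1$ pins the center at $0$ only if $F(0)>0$.
\item If $F(0)=0$, you must exclude maximizers vanishing near $\partial B_1$, since their translates are again maximizers.
\item Passing from $F(|u|)$ to $|u|$ needs strict monotonicity of $F$. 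The paper gives no argument for this symmetry part either.
\end{itemize}

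\textbf{Part (3)} matches Proposition~\ref{C-infiniteness}, except for one claim. $\int_{B_1}F(u_k)$ is not bounded in general: for $\beta>0$ it grows like $k^{\beta(N-1)/N}$. What you need is $\bigl(\int_{B_1}F(u_k)\bigr)^2=o\bigl(k^{1+2\beta(N-1)/N}\bigr)$, and this must be checked against the upper growth of $F$.
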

Here, \textit{at most $\beta$-critical growth} means that $F(t)\leq c \, e^{\alpha_N|t|^{\frac{N}{N-1}}}(1+|t|)^{\beta}$ for $t \in \R$ with some constant $c >0$, while \textit{at least $\beta$-critical growth} means that there exist $t_0$, $c>0$ with the property that $F(t)\geq c\, e^{\alpha_N |t|^{\frac{N}{N-1}}}|t|^\beta$ for $|t|\geq t_0$.\\ \noindent
We note that the existence of extremal functions in the case of the $\beta$-threshold is an open problem, which will be addressed in this article. We aim to prove their existence through a blow-up analysis, as was done in the two-dimensional case for $\beta=-1$ in \cite{CiWeYu}.\\ \noindent
In our critical case $\beta=-\frac{N}{2(N-1)}$, we assume in the following that $F: \R \to [0,+\infty)$ has the form 
\begin{equation}
  \label{eq:F-shape}
F(t)= \frac{e^{\alpha_N |t|^{\frac{N}{N-1}}}}{ ( 1 + |t| )^\frac{N}{2(N-1)} } g(|t|),
\end{equation}
where
\begin{itemize}
\item[$(g_0)$]  $g: [0,\infty) \to [0,\infty)$ satisfies  \begin{equation} \label{eq:g-0-first} g(t) \le \gamma e^{\gamma |t|^{N/(N-1)}} \qquad \text{for $t \ge 0$ with some $\gamma \ge 1$.} 
\end{equation}
\end{itemize}
For our main results, we also need the following monotonicity condition.
\begin{itemize}
\item[$(g_1)$] The function $g$ is of class $C^1$ and satisfies $F' \ge 0$ on $(0,\infty)$, i.e.,
  \begin{equation}\label{strictlyincreasing}
  	\frac{N}{2(N-1)} (2 \alpha_N t^\frac{N}{N-1} + 2 \alpha_N t^\frac{1}{N-1} -1) g(t)+(1+t)g'(t)  \ge 0 \quad \forall t \in (0, \infty).
	\end{equation}
\end{itemize}
  
\begin{remark}
\label{remark-g-0-assumption}  
        We also note for later use that, if (\ref{strictlyincreasing}) holds, we have
        $$g'(t) \ge \frac{N}{2(N-1)} \frac{1- 2 \alpha_N t^\frac{1}{N-1} - 2 \alpha_N t^\frac{N}{N-1} }{1+t}g(t) \ge 0$$ and therefore $g(t) \ge g(0)$ for $t \in [0,t_0]$, where $t_0 > 0$ is the first point in $(0,+\infty)$ such that $ 1 - 2 \alpha_N t_0^\frac{1}{N-1} - 2 \alpha_N t_0^\frac{N}{N-1} = 0 $.
\end{remark}
In our first theorem, we provide sharp borderline conditions for the problem of maximizing $\Phi$ in $\cB_1$ depending on the asymptotic behaviour of the function $g$ at infinity. To state our main results, we define
\begin{equation}
  \label{eq:def-C-g}
C_g:= \limsup \limits_{t \to + \infty} {g(t)} \in  [0,\infty].  
\end{equation}

\begin{theorem}	\label{sec:introduction-main-thm-last}
	Suppose that $g$ satisfies $(g_0)$.
\begin{enumerate} 
\item If $C_g< \infty$, then $m_1(F) <\infty$.
\item If $\lim \limits_{t \to + \infty} g(t)=  + \infty$,
then $m_1(F)= + \infty$.
\item If $g$ also satisfies $(g_1)$ and $C_g = 0$, 
	 then $m_1(F)$ is attained, and every maximizer for $\Phi$ in $\cB_1$ is, up to sign, a radial and radially decreasing function in $\cB_1$.
\end{enumerate}
\end{theorem}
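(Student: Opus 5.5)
Parts (1) and (2) follow essentially from the previous result; the real work is part (3).

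\emph{Part (1).} If $C_g<\infty$, then $g$ is bounded on $[0,\infty)$. Indeed, it is bounded on compacts by $(g_0)$ and eventually bounded by $C_g+1$. Hence $F$ has at most $\beta$-critical growth with $\beta=-\frac{N}{2(N-1)}$. Part (1) of Theorem \ref{sec:introduction-main-thm-C-F2} then gives finiteness. The monotonicity hypothesis there was only used to reduce to radial functions via symmetric decreasing rearrangement. To remove it, I would replace $F$ by the increasing majorant $\tilde F(t)=\sup_{s\le |t|}F(s)$, which still has at most $\beta$-critical growth. Since $\Phi^+$ only increases and $\Phi^-$ is bounded on $\cB_1$ (because $|x-y|\le 2$), we get
$$\Phi(u)\le \Phi_{\tilde F}(u)+\ln 2\Big(\int_{B_1}\tilde F(u)\Big)^2.$$
The last term is bounded by the Trudinger--Moser inequality.

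\emph{Part (2).} I would use the Moser sequence $u_k$: it equals $(\ln k/\omega)^{(N-1)/N}$ on $B_{1/k}$, with the usual logarithmic profile outside. The computation from \cite{CC} for at least $\beta$-critical growth shows that $\Phi(u_k)$ is of order $\int\!\!\int$ over $B_{1/k}\times B_{1/k}$. This gives $\ln k \cdot k^{-2N}e^{2\alpha_N t_k^{N/(N-1)}}t_k^{-N/(N-1)}g(t_k)^2$, which equals $c\,g(t_k)^2$ up to constants. The positive part dominates, since $\ln\frac1{|x-y|}\ge \ln(k/2)$ there, and the rest is bounded below. So $\Phi(u_k)\to\infty$ as $g(t_k)\to\infty$.

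\emph{Part (3).} This is a blow-up analysis in the spirit of \cite{CiWeYu}. By $(g_1)$, $F$ is increasing, so rearrangement (Riesz inequality for the decreasing kernel $\ln\frac1{|x-y|}$, plus Pólya--Szegő) reduces the problem to radial decreasing $u\ge0$. The steps are as follows.
\begin{enumerate}
\item Take subcritical approximations $F_\eps(t)=F(t)e^{-\eps|t|^{N/(N-1)}}$. These have maximizers $u_\eps$ that are radial and decreasing, with $|\nabla u_\eps|_N=1$. They satisfy the Euler--Lagrange equation $-\Delta_N u_\eps=\lambda_\eps^{-1}F_\eps'(u_\eps)\,w_\eps$, where $w_\eps(x)=\int_{B_1}\ln\frac1{|x-y|}F_\eps(u_\eps(y))\,dy$. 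Moreover $m_\eps\to m_1(F)$.
\item If $c_\eps=u_\eps(0)$ stays bounded, elliptic estimates give convergence to a maximizer. So assume $c_\eps\to\infty$.
\item Perform the blow-up analysis. The rescaling at scale $r_\eps$ chosen from $c_\eps$ converges to the $N$-Laplacian Liouville bubble. Away from the origin, $c_\eps^{1/(N-1)}u_\eps$ converges to a Green-type function. Concentration-compactness (Lions) forces $|\nabla u_\eps|^N\weak\delta_0$.
\item Obtain the upper bound. I would show that in the concentrating regime, $\limsup m_\eps$ is bounded by a quantity proportional to $\limsup_{t\to\infty}g(t)^2=C_g^2=0$, plus contributions from outside the concentration region that tend to $\Phi$ of the weak limit, which is $0$. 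The key is that the mass $\int F_\eps(u_\eps)$ near the origin is controlled by $g(c_\eps)$ times a universal constant. This is exactly why $\beta=-\frac{N}{2(N-1)}$ is borderline: the power $(1+t)^{-N/(2(N-1))}$ squared cancels the $\ln(1/r_\eps)\sim c_\eps^{N/(N-1)}$ factor from the kernel. Thus blow-up would give $m_1(F)\le 0$.
\item Conclude by contradiction. Since $m_1(F)>0$ (test with a small-support function, using $g(0)>0$ or $g\not\equiv0$), blow-up is impossible. This contradiction yields existence, and symmetry of maximizers follows from the strict Riesz inequality.
\end{enumerate}

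The main obstacle is step 4. Precise pointwise estimates for $u_\eps$ are needed in the neck region between the bubble scale and fixed scales, in order to bound $\int_{B_1}\int_{B_1}\ln\frac1{|x-y|}F_\eps(u_\eps(x))F_\eps(u_\eps(y))\,dx\,dy$. I would first try to bound $F_\eps(u_\eps)$ on level sets by $g(u_\eps)$ times Carleson--Chang-type estimates adapted to dimension $N$, as in \cite{CiWeYu}.
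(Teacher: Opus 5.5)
\textbf{Parts (1) and (2).} These follow the paper: in (1) you majorize $F$ by an increasing function with at most $(-\frac{N}{2(N-1)})$-critical growth, and in (2) you use the Moser sequence. One step in (2) needs proof: ``the rest is bounded below''. Under $(g_0)$ alone, with $g\to\infty$ and no monotonicity, $\int_{B_1}F(u_k)$ is not controlled by Trudinger--Moser. So the crude bound $-\ln 2\,(\int_{B_1}F(u_k))^2$ is not obviously dominated by your main term. The paper discards everything outside $B_{1/n}$ using \eqref{newton-1-argument}. That identity is exact only for $N=2$: for $N\ge 3$ the function $\ln\frac{1}{|x|}$ is strictly superharmonic, so the spherical means of the kernel are only $\le \ln\frac{1}{\max\{|x|,|y|\}}$. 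A dimension-free fix runs as follows. First, $\ln^-\frac{1}{|x-y|}\neq 0$ forces $\max\{|x|,|y|\}>\frac12$, and on that region $F(u_k)\to g(0)$ uniformly, so $\Phi^-(u_k)\le C\int_{B_1}F(u_k)$. Second, take the heights $t_k$ at record points of $g$, i.e.\ $g(t_k)=\max_{[0,t_k]}g$. Then $\int_{B_1}F(u_k)\le C g(t_k)=o(g(t_k)^2)$, which is negligible against your main term.

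\textbf{Part (3) has a genuine gap.} Step 4 carries all the weight but is only announced. Worse, the conclusion you draw from it is wrong. Away from the concentration point, $F(u_\eps)\to F(0)=g(0)$. Hence for a concentrating radial sequence with $C_g=0$ the energy tends to $\Phi(0)=g(0)^2\int_{B_1}\int_{B_1}\ln\frac{1}{|x-y|}\,dx\,dy$, not to $0$; for $N=2$ this equals $\frac{\pi^2}{4}g(0)^2>0$. So blow-up gives $m_1(F)\le \Phi(0)$, not $m_1(F)\le 0$, and ``$m_1(F)>0$'' yields no contradiction. Your test-function argument for $m_1(F)>0$ is also unsound, since $F(u)\ge g(0)$ on all of $B_1$ whatever the support of $u$. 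In fact no contradiction is needed: $0\in\cB_1$, so $m_1(F)\le\Phi(0)$ simply says that $u\equiv 0$ is a maximizer. Two further issues:
\begin{itemize}
\item $F e^{-\eps|t|^{N/(N-1)}}$ need not be increasing, so even the radial reduction for your approximating problems is not automatic.
\item Strict Riesz alone does not give symmetry of every maximizer. $(g_1)$ allows $F$ to be constant, and then every $u\in\cB_1$ is a maximizer; the paper itself only proves attainment in $\cB_1^*$.
\end{itemize}

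\textbf{The paper's route.} The blow-up machinery (Euler--Lagrange equation, bubble, Green function, neck estimates) is unnecessary. The paper applies the direct method to a maximizing sequence $u_n\in\cB_1^*$ with $u_n\weak u$. It shows $\Phi(u_n)\to\Phi(u)$ for every such sequence (Proposition~\ref{abstract-strong-continuity}).
\begin{itemize}
\item If $u\neq0$, Lions' improved Trudinger--Moser inequality bounds $F(u_n)$ in some $L^{s_0}$ with $s_0>1$, which gives convergence.
\item If $u=0$, write $F=F(0)+\tilde F$ and use $u_n\to0$ uniformly on $[\delta,1]$. Near the origin, split into $A_n^+=\{u_n(r)\ge[\eps\ln\frac1r]^{(N-1)/N}\}$ and its complement.
\item On $A_n^+$, the two weights $(1+u_n)^{-N/(2(N-1))}$ cancel the kernel factor $\ln\frac1r$ up to $\frac1\eps$. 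Moreover $g(u_n)\le\sup\{g(t):t\ge[\eps\ln\frac1\delta]^{(N-1)/N}\}\to0$ as $\delta\to0$.
\item On the complement, $F(u_n(r))\le M r^{-\alpha_N\eps}$, which is integrable.
\end{itemize}
This elementary estimate holds for arbitrary radially decreasing sequences in $\cB_1$, and it is exactly what your step 4 was missing.
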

\begin{remark}
  \begin{itemize}
  \item[(i)] Theorem~\ref{sec:introduction-main-thm-last}(i) is essentially contained in \cite[Theorem 1.2]{CC}, since $F$ has at most $(-1)$-critical growth if $C_g< \infty$. However, as has been mentioned already, it was assumed in addition in \cite[Theorem 1.2]{CC} that $F$ is increasing on $[0,\infty)$, and we shall note in the proof of Proposition~
\ref{C-finiteness}, Section~\ref{sec:preliminaries}   below that this restriction is unnecessary.      
  \item[(ii)] 
We notice that Theorem \ref{sec:introduction-main-thm-last}(ii) and (iii) both improve \cite[Theorem 1.2]{CC}.
Indeed, consider first $\sigma \ge \frac{N}{N-1} \log 3$ and the function 
$$
t \mapsto g(t)=  \log^\sigma (2+ t), \quad t \ge 0
$$
Then $g$ satisfies $(g_0)$, $(g_1)$ and $\lim \limits_{t \to + \infty} g(t)=  + \infty$, so $m_1(F)= + \infty$ by Theorem~\ref{sec:introduction-main-thm-last}(ii). However, $F$ has not at least $\beta$-critical growth if $\beta >-\frac{N}{2(N-1)}$, so 
\cite[Theorem 1.2]{CC} does not apply.
Similarly, we may consider $0 < \sigma <\frac{N}{(N-1)}\log 2$ and the function 
$$
t \mapsto g(t)=  \frac{t}{2+t}\log^{-\sigma} (2+ t), \quad t \ge 0
$$
Then $g$ satisfies $(g_0)$, $(g_1)$ and $C_g = 0$, so $m_1(F)$ is attained by Theorem~\ref{sec:introduction-main-thm-last}(iii). On the other hand,  $F$ does not have at most $\beta$-critical growth if $\beta <-\frac{N}{2(N-1)}$, so \cite[Theorem 1.2]{CC} does not apply.
\end{itemize}
\end{remark}

Theorem~\ref{sec:introduction-main-thm-last} leaves open the question whether $m_1(F)$ is attained in the purely critical case where
$C_g \in (0, \infty)$. The study of this question is the context of the main results of this paper. Our first answer to this question is the following conditional result.

\begin{theorem}	\label{sec:introduction-main-thm-critical-sufficient-cond}
	Suppose that $g$ satisfies $(g_0)$, $(g_1)$ and $C_g \in (0, \infty)$. 
        If there exists $u \in \cB_1$ with
        \begin{equation}
          \label{eq:sufficient-cond}
        \Phi(u) > \frac{2 \omega_{N-1}^2}{N^3} \Bigl(\frac{g^2(0)}{4} + \frac{N}{2}  \omega_{N-1}^\frac{1}{N-1} C_g^2 e^{2(1+\frac{1}{2}+\cdots+ \frac{1}{N-1})}   \Bigr),          
        \end{equation}
then $m_1(F)$ is attained in $\cB_1$.
\end{theorem}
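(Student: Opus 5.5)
The plan is a blow-up analysis along a subcritical approximation, in the spirit of Carleson--Chang and of \cite{CiWeYu}. First we reduce to radial functions. By $(g_1)$ the function $F$ is even and nondecreasing on $[0,\infty)$, so $F(|u|^*) = F(u)^*$, where ${}^*$ denotes Schwarz symmetrization. The Riesz rearrangement inequality does not apply directly to the sign-changing kernel $\ln\frac{1}{|x-y|}$. On $B_1\times B_1$, however, we can replace it by $\ln\frac{2}{|x-y|}$, which is positive and radially decreasing, at the cost of the term $\ln 2\,(\int F(u))^2$; this term is invariant under rearrangement. Together with the P\'olya--Szeg\H{o} inequality, this shows that it suffices to maximize over radial, radially decreasing $u\in\cB_1$.

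Next we approximate from below. For $\alpha<\alpha_N$ set
\[
F_\alpha(t)=\frac{e^{\alpha |t|^{N/(N-1)}}}{(1+|t|)^{N/(2(N-1))}}\,g(|t|).
\]
By the subcritical result recalled before \eqref{rottt} (Trudinger--Moser together with the logarithmic HLS inequality) and Vitali's theorem, the corresponding functional $\Phi_\alpha$ is weakly continuous on $\cB_1$. Hence it has a radial maximizer $u_\alpha\ge0$, and $m_\alpha:=\sup_{\cB_1}\Phi_\alpha\to m_1(F)$ as $\alpha\uparrow\alpha_N$. Each $u_\alpha$ satisfies an Euler--Lagrange equation
\[
-\Delta_N u_\alpha=\lambda_\alpha^{-1}F_\alpha'(u_\alpha)\,w_\alpha,\qquad w_\alpha(x)=\int_{B_1}\ln\frac{1}{|x-y|}F_\alpha(u_\alpha(y))\,dy.
\]
Passing to a subsequence, $u_\alpha\weak u_0$. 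If $c_\alpha:=u_\alpha(0)=\max u_\alpha$ stays bounded, dominated convergence gives $\Phi(u_0)=m_1(F)$, and we are done.

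So assume $c_\alpha\to\infty$; the aim is then to prove $m_1(F)\le$ the right-hand side of \eqref{eq:sufficient-cond}, which contradicts the hypothesis. We first show concentration. Using Lions' concentration-compactness for radial functions, $|\nabla u_\alpha|^N\,dx\weak\delta_0$ and $u_\alpha\to0$ away from the origin. Next we rescale at the scale $r_\alpha$ defined by $r_\alpha^N c_\alpha^{N/(N-1)}e^{\alpha c_\alpha^{N/(N-1)}}\sim\lambda_\alpha\,\cdot$ (suitable normalization). The functions $c_\alpha^{1/(N-1)}(u_\alpha(r_\alpha x)-c_\alpha)$ should converge to the standard Liouville-type bubble
\[
-\frac{N-1}{\alpha_N}\ln\Bigl(1+c_N|x|^{N/(N-1)}\Bigr).
\]
Away from the origin, $c_\alpha^{1/(N-1)}u_\alpha\to G$, a Green-type function.

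Finally we split $\Phi(u_\alpha)$ into interaction regions. The part supported near $0\times0$ carries $(\int F(u_\alpha))^2$-type mass with the logarithm evaluated essentially at $\ln(1/r_\alpha)$. This part produces the $C_g^2 e^{2(1+\frac12+\dots+\frac1{N-1})}$ term, via the Carleson--Chang constant for the Dirichlet ball. Here the weight $(1+t)^{-N/(2(N-1))}$ exactly cancels the $\ln(1/r_\alpha)\sim \alpha_N c_\alpha^{N/(N-1)}/N$ growth, and $g(c_\alpha)\to C_g$ at most. The part with both points in the region where $u_\alpha\to0$ gives the $g(0)^2/4$ term; this contribution comes from $F(0)=g(0)$ integrated against the logarithm over the ball, i.e.\ $\int\!\!\int_{B_1\times B_1}\ln\frac1{|x-y|}$ times $g(0)^2$ after normalization. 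The mixed terms vanish.

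\textbf{Main obstacle.} The sharp upper bound in the concentrating case is where I expect the real difficulty. It requires the fine asymptotics that pin down the constant, namely Carleson--Chang-type estimates on the neck region between $r_\alpha$ and a fixed radius, with $\lambda_\alpha$ controlled. I would first try to avoid the full PDE blow-up analysis. The idea is to use the radial reduction $u(x)=v(t)$ with $|x|=e^{-t/N}$, then apply the Carleson--Chang lemma to one-dimensional profiles, bounding $\ln\frac1{|x-y|}\le\ln\frac1{\max(|x|,|y|)}$ for radial functions. This reduces the problem to sharp one-dimensional integral estimates.
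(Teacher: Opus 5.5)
Your overall structure is the right one. You reduce to radial, radially decreasing functions; your rearrangement argument with the positive kernel $\ln\frac{2}{|x-y|}$ on $B_1\times B_1$ is correct. Then either compactness yields a maximizer, or concentration forces $m_1(F)$ below the threshold in \eqref{eq:sufficient-cond}. The gap is that the statement carrying the whole theorem is never proved: every sequence of radial, radially decreasing $u_n\in\cB_1$ with $u_n\weak 0$ satisfies
\[
\limsup_{n\to\infty}\Phi(u_n)\le\frac{2\omega_{N-1}^2}{N^3}\Bigl(\frac{g^2(0)}{4}+\frac N2\,\omega_{N-1}^{\frac1{N-1}}C_g^2\,e^{2(1+\frac12+\cdots+\frac1{N-1})}\Bigr).
\]
In your blow-up route this bound appears only as a heuristic final paragraph, and the steps before it are asserted rather than derived:
the Euler--Lagrange equation carries the nonlocal potential $w_\alpha$, which is not asymptotically constant near the origin and is coupled to the multiplier $\lambda_\alpha$;
the blow-up scale is left as a ``suitable normalization'';
convergence to the bubble and to a Green function, and the neck estimates between $r_\alpha$ and a fixed radius that actually fix the constant, are not established.
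You acknowledge this as the main obstacle, so the proposal is a program, not a proof. Also, the case distinction should be on the weak limit $u_0$, not on $c_\alpha$. If $u_0\neq0$, Lions' improved Trudinger--Moser inequality gives $\Phi_\alpha(u_\alpha)\to\Phi(u_0)$ whether or not $c_\alpha\to\infty$. Concentration of $|\nabla u_\alpha|^N\,dx$ at the origin can only be expected when $u_0=0$.

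None of the PDE machinery is needed. The paper takes an arbitrary Schwarz-symmetrized maximizing sequence. If its weak limit is nonzero, it concludes by Lions' lemma (Proposition~\ref{abstract-strong-continuity}). Otherwise the sequence is an SCS-sequence, and the bound above (Theorem~\ref{sec:introduction-CC-theorem}(i)) is proved by the one-dimensional reduction you mention only in your last sentence. That reduction needs more than ``apply the Carleson--Chang lemma''. With $w_n(t)=N^{\frac{N-1}{N}}\omega_{N-1}^{\frac1N}u_n(e^{-t/N})$ and $c:=N^{\frac{1-N}{N}}\omega_{N-1}^{-\frac1N}$, $\Phi$ becomes the double integral \eqref{eq:functional}, where the weight $y$ comes from $\ln\frac1r=\frac yN$. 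One cuts at $a_n=\inf\{t\ge3: w_n^{N/(N-1)}(t)\ge t-3\log t\}\to\infty$. Below $a_n$ the integrands are bounded by $\|g\|_{L^\infty}y^{-2}$ and $w_n\to0$ locally uniformly, so $[0,a_n]^2$ contributes at most $g^2(0)/4$. The mixed part vanishes thanks to the factor $(1+c\,w_n(a_n))^{-\frac N{2(N-1)}}\to0$. Above $a_n$ one splits according to whether $w_n(y)\ge\varepsilon y^{\frac{N-1}N}$:
there the weight $y\,(1+c\,w_n(y))^{-\frac N{N-1}}$ is at most $N\omega_{N-1}^{\frac1{N-1}}\varepsilon^{-\frac N{N-1}}$, and the complementary region is exponentially small;
symmetry gives $\int_{a_n}^\infty\int_y^\infty f(x)f(y)\,dx\,dy=\frac12\bigl(\int_{a_n}^\infty f\bigr)^2$;
the Carleson--Chang estimate $\limsup_n\int_{a_n}^\infty e^{w_n^{N/(N-1)}(x)-x}\,dx\le e^{1+\frac12+\cdots+\frac1{N-1}}$, combined with $\limsup g\le C_g$ along $w_n\ge w_n(a_n)\to\infty$, yields the second term as $\varepsilon\to1$.

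Finally, your comparison $\ln\frac1{|x-y|}\le\ln\frac1{\max(|x|,|y|)}$ is false pointwise (take $x=y$). It holds after averaging over the spheres $|x|=r$, $|y|=\rho$, with equality for $N=2$ and strict inequality for $N\ge3$, because $\ln\frac1{|z|}$ is then superharmonic. This averaged inequality is exactly what the upper bound requires; it is also the correct reading of \eqref{newton-1-argument} when $N\ge3$. It further shows that for $N\ge3$ the quantity $g^2(0)\int_{B_1}\int_{B_1}\ln\frac1{|x-y|}\,dx\,dy$, which you identify with the $g^2(0)/4$ term, is in fact strictly smaller. That is harmless for an upper bound.
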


The sufficient condition (\ref{eq:sufficient-cond}) is a consequence of a detailed analysis of $\Phi(u_n)$ for concentrating sequences. For this we need the following definition.

\begin{definition}
  \label{def-scs-sequence}
We call a sequence of functions $u_n \in \cB_1$ a {\em Schwarz symmetric concentrating sequence} ($SCS$-sequence in short) if $u_n$ is radial, nonnegative and nonincreasing in the radial variable for every $n$ and satisfies $u_n \weak 0$ weakly in $W^{1,N}_0(B_1)$ but not strongly.  
\end{definition}

We then have the following upper bound.

\begin{theorem}	\label{sec:introduction-CC-theorem}
  Suppose that $g$ satisfies $(g_0)$, $(g_1)$ and $C_g \in (0, \infty)$. 
  \begin{itemize}
  \item[(i)] For any $SCS$-sequence $(u_n)_n \subset \cB_1$ we have
        \begin{equation}
          \label{limsup-SCS}
          \limsup_{n \to \infty} \Phi(u_n) \le \frac{2 \omega_{N-1}^2}{N^3} \Bigl(\frac{{g^2(0)}}{4} + \frac{N}{2}  \omega_{N-1}^\frac{1}{N-1} C_g^2 e^{2(1+\frac{1}{2}+\cdots+ \frac{1}{N-1})} \Bigr).
        \end{equation}
  \item[(ii)] If $\lim \limits_{t \to \infty}g(t)=C_g$, then there exists a $SCS$-sequence $(u_n)$ with
        \begin{equation}
          \label{sharp-lim-SCS}
     \lim_{n \to \infty} \Phi(u_n)= \frac{2 \omega_{N-1}^2}{N^3} \Bigl(\frac{{g^2(0)}}{4} +\frac{N}{2}  \omega_{N-1}^\frac{1}{N-1} C_g^2 e^{2(1+\frac{1}{2}+\cdots+ \frac{1}{N-1})}  \Bigr),
\end{equation}
so the upper bound in (\ref{limsup-SCS}) is sharp.  If, in addition,
\begin{equation}
\label{g-2-prime} \liminf \limits_{\tau \to \infty} \Bigl(g(\tau)-{C_g}\Bigr)\tau^\rho >0  \qquad \qquad \text{for some $\rho<\frac{1}{N-1}$,}
\end{equation}
then this sequence satisfies 
        \begin{equation}
          \label{sharp-lim-SCS-above}
\Phi(u_n)> \frac{2 \omega_{N-1}^2}{N^3} \Bigl(\frac{{g^2(0)}}{4} + \frac{N}{2}  \omega_{N-1}^\frac{1}{N-1} C_g^2 e^{2(1+\frac{1}{2}+\cdots+ \frac{1}{N-1})}   \Bigr) \qquad \text{for large $n$.}
\end{equation}
  \end{itemize}
\end{theorem}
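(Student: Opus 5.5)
We work in Moser variables. For radial $u\in\cB_1$ set $r=e^{-s/N}$ and
\[
w(s)=\alpha_N^{(N-1)/N}\, u(e^{-s/N}), \qquad s\ge 0 .
\]
Then $w(0)=0$, $\int_0^\infty |w'|^N\,ds\le 1$, and $\alpha_N|u|^{N/(N-1)}=w^{N/(N-1)}$. Since $dx=\frac{\omega_{N-1}}{N}e^{-s}\,ds$, every factor $F(u(x))\,dx$ becomes
\[
\frac{\omega_{N-1}}{N}\, e^{w^{N/(N-1)}-s}\,(1+|u|)^{-\frac{N}{2(N-1)}}\, g(|u|)\,ds .
\]
For radial functions we average the logarithmic kernel over spheres: $\frac{1}{\omega_{N-1}^2}\iint_{S\times S}\ln\frac{1}{|r\theta-\rho\eta|}\,d\theta\,d\eta=\ln\frac{1}{\max(r,\rho)}+h(\min(r,\rho)/\max(r,\rho))$ with $h$ bounded and $h(0)=0$. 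So the problem becomes a one-dimensional double integral. It has the kernel $\frac{1}{N}\max(s,t)^{-1}$-type weight, more precisely $\ln\frac{1}{\max(r,\rho)}=\frac{\min(s,t)}{N}$, plus a bounded correction.

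\textbf{Part (i).} Let $(u_n)$ be an SCS-sequence. We split $\Phi(u_n)$ according to where the mass of $F(u_n)$ sits. Since $u_n\weak 0$ but not strongly, the concentration-compactness lemma of Lions (in its radial $N$-dimensional form) gives $|\nabla u_n|^N\rightharpoonup\delta_0$ and $u_n\to 0$ locally uniformly on $B_1\setminus\{0\}$.

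Fix a cut at $A_n=\{w_n\le K\}$ versus $\{w_n>K\}$. On the nonconcentrated region we have $F(u_n)\to F(0)=g(0)$ in $L^p$ away from the origin. The self-interaction of this part tends to
\[
g(0)^2\iint_{B_1\times B_1}\ln\frac{1}{|x-y|}\,dx\,dy = \frac{2\omega_{N-1}^2}{N^3}\cdot\frac{g^2(0)}{4},
\]
which we will check by the radial formula above; this accounts for the first term. The cross term should vanish, because the concentrated region has measure $\to 0$ and its $F$-mass tends to a finite limit $\mu$ times a logarithm that is compensated. More precisely, the cross term equals $\frac{\omega_{N-1}}{N}\int \frac{\min(s,t)}{N}\cdots$ with $t$ bounded, and the inner mass is $O(1)$ times $e^{-t}$-weights, so it is controlled. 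The key quantity is the concentrated self-interaction
\[
\frac{\omega_{N-1}^2}{N^3}\iint \min(s,t)\,e^{w^{N/(N-1)}(s)-s}\,e^{w^{N/(N-1)}(t)-t}\,w(s)^{-N/2}w(t)^{-N/2}\,g\,g\;ds\,dt
\]
(up to constants from $(1+|u|)^{-N/(2(N-1))}\sim \alpha_N^{1/2}w^{-N/2}$). Here we bound $g$ by $C_g+o(1)$ and then need a Carleson--Chang type sharp estimate. Using $w(s)\le s^{(N-1)/N}\bigl(\int|w'|^N\bigr)^{1/N}$ and the standard Carleson--Chang argument (Hölder on $[0,T]$, optimizing the profile that is linear up to the concentration scale and then constant), one shows that the double integral is asymptotically at most $\frac{N}{2}\omega_{N-1}^{1/(N-1)}e^{2(1+\cdots+\frac{1}{N-1})}$ times the prefactor. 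The constant $e^{1+\frac12+\cdots+\frac1{N-1}}$ is exactly the Carleson--Chang constant in dimension $N$, and here it appears squared. This step is the main obstacle. The weight $\min(s,t)\,w^{-N/2}w^{-N/2}$ must be shown to behave like $s\,w^{-N}\approx 1$ on the concentration plateau, i.e.\ $w^{N/(N-1)}\approx s$ there. We would adapt the proof from \cite{CiWeYu} for $N=2$: reduce to $\bigl(\int e^{w^{N/(N-1)}-s}\,s^{1/2}w^{-N/2}\,ds\bigr)^2$ and apply the Carleson--Chang lemma in its refined form, which controls $\int_T^\infty e^{w^{N/(N-1)}-s}\,ds\le e^{1+\cdots+1/(N-1)}$ asymptotically when $\int_0^T|w'|^N\le\delta$ small.

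\textbf{Part (ii).} We take the Carleson--Chang extremal profile in Moser variables, normalized in $\cB_1$: $w_n$ is piecewise defined, with $w_n(s)=(s/n)^{(N-1)/N}\cdot$(normalization) for $s\le n$, followed by a logarithmic tail chosen so that $\int_n^\infty e^{w_n^{N/(N-1)}-s}\,ds\to e^{1+\cdots+\frac{1}{N-1}}$. Plugging this into the decomposition above, using $g\to C_g$ uniformly on the range where $u_n$ is large and $u_n\to 0$ elsewhere, gives equality in the limit (\ref{sharp-lim-SCS}).

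Under (\ref{g-2-prime}), we compute the next-order correction. The main error terms coming from $w_n$ are of order $n^{-1}$ (from the normalization and from the $h$-correction). The gain from $g-C_g\gtrsim u^{-\rho}\sim n^{-\rho(N-1)/N}$ on the concentration region is larger, because $\rho(N-1)/N<1/N<1$. We will also have to check that the nonconcentrated part approaches its limit with error $o(n^{-\rho(N-1)/N})$, and this requires choosing the profile carefully (for example replacing the linear part by a slight perturbation). This then yields the strict inequality (\ref{sharp-lim-SCS-above}).
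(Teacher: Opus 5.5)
There are genuine gaps. Your plan for (i) follows the paper's three-way split ($J_n$, $I_n$, $K_n$ in \eqref{int-splitting}), but the argument fails at several points, most clearly in (ii) and (iii).

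\textbf{Parts (ii) and (iii).} The profile you propose, $w_n(s)=c\,(s/n)^{(N-1)/N}$ on $[0,n]$, has $|w_n'|^N\propto s^{-1}$. Its Dirichlet energy is therefore infinite, so it comes from no $u_n\in\cB_1$. The paper instead uses the \emph{linear} Moser piece $w_n(t)=t\,n^{-1/N}(1-\delta_n)^{(N-1)/N}$, with $\delta_n=\frac{s\log n}{n}$. This is followed by the logarithmic tail in \eqref{def-w-n}, with $A_n$ fixed by $\int_0^\infty|w_n'|^N\,dt=1$.

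You also never prove a lower bound. Three facts are needed:
\begin{itemize}
\item $\int_n^\infty e^{w_n^{N/(N-1)}-t}\,dt\ge e^{1+\frac12+\cdots+\frac1{N-1}}-o(1)$, which is Lemma~\ref{precise-A-n-asymptotics};
\item $w_n=(n(1-\delta_n))^{(N-1)/N}+o(1)$ on $[n,\infty)$, since $\log\frac{A_n+1}{A_n}=O(\log n)$; this makes the weight in $\tilde K_n$ at least $N\omega_{N-1}^{1/(N-1)}-o(1)$;
\item $\liminf_n\tilde J_n\ge\frac{g^2(0)}{4}$, via Remark~\ref{remark-g-0-assumption}.
\end{itemize}
Combined with (i), these give \eqref{sharp-lim-SCS}.

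In (iii) your error bookkeeping is wrong: the errors are not $O(n^{-1})$. The tail integral is only known to be $\ge e^{1+\frac12+\cdots+\frac1{N-1}}-O(n^{-s})$, since the deficit has the size of $A_n\sim n^{-s}$. The paper controls $\tilde J_n$ only up to $O(n^{\tau-1/N})$. The actual content of (iii) is the choice $s\in(\frac{(N-1)\rho}{N},\frac1N)$ and $\tau\in(0,\frac1N-s)$. This makes the gain $\sim n^{-(N-1)\rho/N}$ produced by \eqref{g-2-prime} on $[n,\infty)$ dominate both errors. That is exactly where $\rho<\frac1{N-1}$ enters, and it is what your ``choosing the profile carefully'' leaves undone.

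\textbf{Part (i).} The decisive estimate for the concentrated part is only announced, and as written it would not go through.
\begin{itemize}
\item You take $(1+|u|)^{-\frac{N}{2(N-1)}}\sim\alpha_N^{1/2}w^{-N/2}$, which is the planar exponent. The correct one is $w^{-\frac{N}{2(N-1)}}$. With yours, the weight behaves like $s^{2-N}$ rather than $1$ where $w^{N/(N-1)}\approx s$.
\item After $\min(s,t)\le\sqrt{st}$, the factor $\sqrt{s}\,w_n^{-\frac{N}{2(N-1)}}$ is unbounded where $w_n^{N/(N-1)}\ll s$. A fixed cut $\{w_n\le K\}$ does not address this.
\end{itemize}
The paper handles this as follows. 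It cuts at $a_n=\inf\{t\ge3:\,w_n^{N/(N-1)}(t)\ge t-3\log t\}$, so that $y\,e^{w_n^{N/(N-1)}(y)-y}\le y^{-2}$ on $[3,a_n)$ while \eqref{eq:cc-estimate} holds beyond $a_n$. It then splits $[a_n,\infty)$ into $H_n^{\pm}=\{w_n(x)\gtrless\eps x^{(N-1)/N}\}$. On $H_n^+$ it bounds the weight by $N\omega_{N-1}^{1/(N-1)}\eps^{-N/(N-1)}$, using only the smaller variable and the monotonicity of $w_n$. It disposes of $H_n^-$ through the factor $e^{(\eps^{N/(N-1)}-1)x}$, and finally lets $\eps\to1$. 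The cross term vanishes simply because of the factor $(1+N^{\frac{1-N}{N}}\omega_{N-1}^{-1/N}w_n(a_n))^{-\frac{N}{2(N-1)}}\to0$; no logarithmic compensation is involved.

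\textbf{The correction $h$.} Your $h$ is not the harmless $O(n^{-1})$ term you claim. For $N\ge3$, $\ln|\cdot|$ is strictly subharmonic, so $h\le0$ and $h\not\equiv0$; for $N=3$, $h(1)=\frac12-\ln2$. The bulk self-interaction therefore tends to $g(0)^2\iint_{B_1\times B_1}\ln\frac{1}{|x-y|}\,dx\,dy$. This is strictly smaller than $\frac{2\omega_{N-1}^2}{N^3}\cdot\frac{g^2(0)}{4}$ when $g(0)>0$.

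The paper obtains $\frac{g^2(0)}{4}$ only because it works throughout with \eqref{newton-1-argument} and Lemma~\ref{transformation}, which contain no $h$-term. That representation is Newton's theorem, which is exact for the logarithmic kernel only when $N=2$. Since $h\le0$, dropping it is harmless for the upper bound (i). However, your step ``which we will check by the radial formula above'' fails in your own set-up. You cannot keep $h$ and also obtain the stated constant in (ii)--(iii).
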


Theorem~\ref{sec:introduction-main-thm-critical-sufficient-cond} and Theorem~\ref{sec:introduction-CC-theorem}(ii) immediately give rise to the following Theorem on the existence of maximizers. 

\begin{theorem}	\label{sec:introduction-main-thm-critical-asymptotic}
  Suppose that $g$ satisfies $(g_0)$, $(g_1)$, $\lim \limits_{t \to \infty}g(t)=C_g \in (0,\infty)$ and (\ref{g-2-prime}). Then $m_1(F)$ is attained in $\cB_1$.
\end{theorem}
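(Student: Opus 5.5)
The plan is to derive Theorem~\ref{sec:introduction-main-thm-critical-asymptotic} directly from the two preceding results, as the text announces. All of its hypotheses are already available: $(g_0)$, $(g_1)$, $\lim_{t\to\infty} g(t) = C_g \in (0,\infty)$ (so in particular $C_g = \limsup_{t\to\infty} g(t) \in (0,\infty)$ in the sense of \eqref{eq:def-C-g}), and the quantitative approach condition \eqref{g-2-prime}.

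First, I apply Theorem~\ref{sec:introduction-CC-theorem}(ii). Since $g(t)\to C_g$, it yields an $SCS$-sequence $(u_n)\subset\cB_1$. Because \eqref{g-2-prime} also holds, this sequence satisfies the strict inequality \eqref{sharp-lim-SCS-above} for all large $n$. Fix one such index $n_0$ and set $u:=u_{n_0}\in\cB_1$. Then
\begin{equation*}
\Phi(u) > \frac{2 \omega_{N-1}^2}{N^3} \Bigl(\frac{g^2(0)}{4} + \frac{N}{2}  \omega_{N-1}^\frac{1}{N-1} C_g^2 e^{2(1+\frac{1}{2}+\cdots+ \frac{1}{N-1})}\Bigr),
\end{equation*}
which is exactly the sufficient condition \eqref{eq:sufficient-cond}.

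Second, I invoke Theorem~\ref{sec:introduction-main-thm-critical-sufficient-cond}. Its hypotheses are $(g_0)$, $(g_1)$, $C_g\in(0,\infty)$ and the existence of some $u\in\cB_1$ satisfying \eqref{eq:sufficient-cond}; all of these have been verified. Hence $m_1(F)$ is attained in $\cB_1$. Finiteness of $m_1(F)$ is not a separate issue, since it already follows from Theorem~\ref{sec:introduction-main-thm-last}(i).

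There is no genuine obstacle at this level: the argument is a direct concatenation of the two earlier theorems. The substantive difficulty lies inside them. In Theorem~\ref{sec:introduction-CC-theorem}(ii) it is the construction of Moser-type test functions with a carefully tuned second-order expansion, where \eqref{g-2-prime} with $\rho<\frac{1}{N-1}$ makes the positive correction $g(\tau)-C_g$ dominate the negative error terms. In Theorem~\ref{sec:introduction-main-thm-critical-sufficient-cond} it is the blow-up analysis, which shows that a maximizing sequence either converges strongly or concentrates with energy bounded by \eqref{limsup-SCS}. The only point I would check here is that both theorems use the same value of $C_g$, which holds because the limit exists and therefore coincides with the $\limsup$.
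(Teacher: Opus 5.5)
Your proposal is correct and is exactly the paper's argument. The paper states that this theorem follows immediately from Theorem~\ref{sec:introduction-CC-theorem}(ii) and Theorem~\ref{sec:introduction-main-thm-critical-sufficient-cond}: a large-$n$ member of the SCS-sequence satisfies \eqref{sharp-lim-SCS-above}, hence \eqref{eq:sufficient-cond}, which is the concatenation you give.
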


This paper is structured as follows. Section \ref{sec:preliminaries} is dedicated to reviewing some preliminaries concerning nonlocal interaction energies and demonstrating the finiteness of the supremum $m_1(F)$, as established in Theorem~\ref{sec:introduction-main-thm-last}(i). In Section \ref{sec:unbounded-case}, we address the unbounded case and provide the proof of Theorem \ref{sec:introduction-main-thm-last}(ii). Section \ref{sec:cont-prop-phi} then deals with the existence of extremal functions in the subcritical case, thereby completing the proof of Theorem \ref{sec:introduction-main-thm-last}.

Subsequently, in Section \ref{sec:an-upper-bound}, we perform the change of variables motivated by the work of Carleson and Chang \cite{carlesonchang}, and determine an upper bound for the Schwarz symmetric concentrating sequences. Section \ref{sec:sharpn-upper-limit} is devoted to answering an open problem raised in \cite{CC} and proving the second part of Theorem \ref{sec:introduction-CC-theorem}. To this end, we first establish Lemma \ref{xi-choice}, a technical result that plays a crucial role in the proof of the theorem. The construction of an SCS-sequence satisfying \eqref{sharp-lim-SCS} draws inspiration from Figueiredo et al. \cite{figuereido-do-o-ruf}. However, it is necessary to operate in a complementary parameter regime, and the estimates involved differ significantly from those presented in \cite{figuereido-do-o-ruf}.

\medskip
{\bf Notation.} Throughout this paper, if $v: \R^N \to \R$ is a radially symmetric function, we let $v$ also denote the associated function $[0,\infty) \to \R$ of the radial variable $r=|x|$.

\bigskip
\noindent
{\bf Acknowledgments.} The first author is supported by the INdAM-GNAMPA Project (CUP: E53C25002010001) and by the PhD scholarship 
\textit{PDEs from Quantum Science} (CUP: H91I23000500007). \\ \noindent The authors would like to thank Prof. Silvia Cingolani and Prof. Tobias Weth for their valuable comments and inspiring discussions that improved the presentation of this paper.

\section{Preliminaries and the first part of Theorem \ref{sec:introduction-main-thm-last}}
\label{sec:preliminaries}

In this section we  will complete the proof of Theorem~\ref{sec:introduction-main-thm-last}(i), but before it is useful to introduce some notation and recall some preliminary results.\\ \noindent Let $\cM_+(\R^N)$ denote the cone of nonnegative real-valued measurable functions on $\R^N$. If $\Omega \subset \R^N$ is a measurable subset and $u$ is a nonnegative real-valued measurable function on $\Omega$, we also regard $u$ as a function in $\cM(\R^N)$ by trivial extension. We then define the quadratic forms $b_\pm: \cM_+(\R^N) \to [0,\infty]$ by 
\begin{equation*}
(v,w) \mapsto b_\pm(v,w)= \int_{\R^N} \int_{\R^N}
\ln^\pm \! \frac{1}{|x-y|}\, v(x)w(y)\,dx dy.
\end{equation*}
Moreover, we define 
$$
b_0(v,w):=b_+(v,w)-b_-(v,w) 
$$
for all functions $v,w \in \cM_+(\R^N)$ for which $b_\pm(v,w)<\infty$. For the sake of brevity, we also set
$$
b_\pm (v):= b_\pm(v,v) \qquad \text{and}\qquad b_0(v):= b_0(v,v)\quad \text{if $b_+(v) < \infty$.}
$$
By definition, we then have
$$
\Phi_\pm(u)= b_\pm(1_{B_1} F(u)) \qquad \text{and}\qquad   \Phi(u)= b_0(1_{B_1} F(u)) \qquad \text{for $u \in W^{1,N}_0(B_1)$.}
$$
Moreover, as noted in the introduction, all of these values are finite if $u \in W^{1,N}_0(B_1)$. We also recall from the introduction that 
 $$
 \cB_1:= \{u \in W^{1,N}_0(B_1)\::\: |\nabla u|_N \le 1\}.
 $$
In order to study the maximization problem for $\Phi$ in the set $\cB_1$, it is important to note that the functional $\Phi$ increases under Schwarz symmetrization if $F$ is an even, nonnegative and increasing on $[0,\infty)$. This is the consequence of the following Riesz rearrangement type inequalities noted in \cite[Lemma 2.3]{CiWe2}. For $v \in \cM_+(\R^N)$ we have
\begin{equation}
  \label{eq:Riesz-rearrangement}
b_+(v^*) \geq b_+(v)\qquad \text{and}\qquad b_-(v^*) \leq b_-(v), 
\end{equation}
where, here and in the following, $v^*$ denotes the Schwarz symmetrization of $v$. We then let 
$$
\cB_1^*:=  \{u^* \::\: u \in \cB_1\}
$$
denote the corresponding Schwarz symmetrized set of $\cB_1$. By the Polya-Szego inequality, we have $\cB_1^* \subset \cB_1$.
Hence the following key corollary readily follows from~\eqref{eq:Riesz-rearrangement}.

 \begin{corollary}
   \label{reduction-to-the-radial-case}
   Let $g$ satisfy $(g_0)$ and $(g_1)$, and let $F$ be given by (\ref{eq:F-shape}). Then we have
\begin{equation}
  \label{eq:m-1-symmetrization-identity}
   m_1(F)= \sup_{u \in \cB_1^*}\Phi(u)  \qquad \text{and}\qquad  m_1^+(F)= \sup_{u \in \cB_1^*}\Phi^+(u),
\end{equation}
where $m_1^+(F)= \sup \limits_{\cB_1}\Phi^+$.
\end{corollary}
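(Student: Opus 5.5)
The plan is to show that Schwarz symmetrization $u \mapsto u^*$ maps $\cB_1$ into $\cB_1^*\subset\cB_1$ and does not decrease $\Phi$ or $\Phi^+$. Together with the trivial inclusion $\cB_1^*\subset \cB_1$ (Polya--Szeg\H{o}), this gives the two identities in \eqref{eq:m-1-symmetrization-identity}.

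\emph{Step 1: symmetrization commutes with $F$.} By $(g_1)$ we have $F'\ge 0$ on $(0,\infty)$, so $F$ is nondecreasing on $[0,\infty)$; it is also even and continuous there. I therefore use the standard fact that $(F(u))^* = F(u^*) = F(|u|^*)$ for $u\in\cB_1$, extended by zero outside $B_1$. Here $u^*$ means $|u|^*$. The justification is that $F(|u|)$ and $|u|$ have comparable level sets: $\{F(|u|)>s\}=\{|u|>t_s\}$ or $\{|u|\ge t_s\}$ with $t_s=\inf\{t: F(t)>s\}$, and these have equal measure for $|u|^*$. Consequently $1_{B_1}F(u^*) = (1_{B_1}F(u))^*$, since $u^*$ is supported in $B_1$, which is its own symmetrization. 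A little care is needed where $F$ is constant on an interval, but monotone (not strictly) functions commute with the decreasing rearrangement, so this causes no problem.

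\emph{Step 2: apply the rearrangement inequalities.} Set $v=1_{B_1}F(u)\in\cM_+(\R^N)$. By \eqref{eq:Riesz-rearrangement},
\[
\Phi^+(u^*)=b_+(v^*)\ge b_+(v)=\Phi^+(u), \qquad \Phi^-(u^*)=b_-(v^*)\le b_-(v)=\Phi^-(u).
\]
All quantities are finite because $u,u^*\in W^{1,N}_0(B_1)$, as recalled in the introduction from \cite{CC}. Subtracting gives $\Phi(u^*)\ge \Phi(u)$.

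\emph{Step 3: conclude.} By Polya--Szeg\H{o}, $|\nabla u^*|_N\le|\nabla u|_N\le 1$, so $u^*\in\cB_1^*\subset\cB_1$. Hence $\sup_{\cB_1}\Phi\le\sup_{\cB_1^*}\Phi\le\sup_{\cB_1}\Phi$, and the same chain holds for $\Phi^+$. This proves the corollary.

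The only delicate point is Step 1, namely the identity $(F\circ|u|)^*=F\circ |u|^*$ for a merely nondecreasing $F$. This is classical, but it is exactly where the monotonicity assumption $(g_1)$ enters.
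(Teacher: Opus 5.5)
Your proposal is correct and follows the paper's proof. The paper uses the same three ingredients: $F$ is even, nonnegative and nondecreasing on $[0,\infty)$, which gives $[1_{B_1}F(u)]^* = 1_{B_1}F(u^*)$; the rearrangement inequalities \eqref{eq:Riesz-rearrangement}, which give $\Phi(u^*)\ge\Phi(u)$ and $\Phi^+(u^*)\ge\Phi^+(u)$; and $\cB_1^*\subset\cB_1$ by P\'olya--Szeg\H{o}. Your extra remarks (the level-set justification of the commutation and the finiteness of $\Phi^\pm$ before subtracting) only spell out what the paper leaves implicit.
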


 \begin{proof}
   Since $F$ is nonnegative, even and increasing on $[0,\infty)$ by assumptions $(g_0)$ and $(g_1)$, we have $[1_{B_1}F(u)]^*= 1_{B_1}F(u^*)$. Therefore
$$
\Phi(u^*)= b_0(1_{B_1} F(u^*))= b_0([1_{B_1}F(u)]^*) \ge b_0(1_{B_1}F(u)) = \Phi(u) 
$$
and
$$
\Phi^+(u^*)= b_+(1_{B_1} F(u^*))= b_+([1_{B_1}F(u)]^*) \ge b_+(1_{B_1}F(u)) = \Phi^+(u)  \qquad \text{for $u \in W^{1,N}_0(B_1)$.}
$$
Since moreover $u^* \in \cB_1^* \subset \cB_1$ for $u \in \cB_1$, the claim follows.
\end{proof}

As a consequence of Corollary~\ref{reduction-to-the-radial-case}, it is important to study the restrictions of the maps $b_\pm$ to radial functions. As noted in \cite[Corollary 2.8]{CiWe2}, for {\em radial} functions $v,w \in \cM_+(\R^N)$ with $b_\pm(v)< \infty$ we have, by Newton's theorem, 
\begin{equation}\label{newton-2-arguments}
\frac{b_0(v,w)}{\omega_{N-1}^2}= \int_{0}^1 r^{N-1} w(r) \Bigl(\ln \frac{1}{r}  \int_{0}^r \rho^{N-1}  v(\rho)d\rho + \int_{r}^1 \rho^{N-1} (\ln \frac{1}{\rho}) v(\rho)d\rho\Bigr)dr
 \end{equation}
and 
\begin{equation}
\label{newton-1-argument}
\frac{b_0(v)}{\omega_{N-1}^2}= 2\int_{0}^\infty r^{N-1} v(r) \ln \frac{1}{r} \int_{0}^r \rho^{N-1}  v(\rho)d\rho dr. 
\end{equation}

We also note the following lemmas.
\begin{lemma}
\label{general-boundedness} Let, for $i=1,2$, $g_i$ be $C^1$ nonnegative bounded even functions, and let $\cB_{1,rad}:= \{u \in \cB_1\::\: \text{$u$ radial}\}$. Then
\begin{equation}
  \label{eq:def-phi-beta-1-beta-2}
\Phi_{g_1,g_2}(u_1,u_2):= \int_{0}^1 r 
\frac{e^{\alpha_N u_1^{\frac{N}{N-1}}(r)} g_1(u_1(r))}{(1+ |u_1(r)|)^{\frac{N}{2(N-1)}}} 
\ln \frac{1}{r} \int_{0}^r \rho \frac{e^{\alpha_N u_2^{\frac{N}{N-1}}(\rho)} g_2(u_2(\rho))}{(1+ |u_2(\rho)|)^{\frac{N}{2(N-1)}}} 
 d\rho\,dr
\end{equation}
defines a bounded functional ${\Phi_{g_1,g_2}}: \cB_{1,rad} \times \cB_{1,rad} \to [0,\infty)$. 
\end{lemma}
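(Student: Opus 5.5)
We reduce to the single-function functional $\Phi$ with a model nonlinearity and then invoke the finiteness statement of \cite[Theorem 1.2]{CC}, i.e. Theorem~\ref{sec:introduction-main-thm-C-F2}(1). Throughout we read the weights $r$, $\rho$ in \eqref{eq:def-phi-beta-1-beta-2} as the radial volume weights $r^{N-1}$, $\rho^{N-1}$, matching the Newton formula \eqref{newton-1-argument]}; for $N=2$ the two readings coincide. With the literal weights $r,\rho$ and $N\ge 3$, this reduction would have to be re-examined.

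\textbf{Step 1: reduction to $g\equiv 1$.} Since $0\le g_i\le \|g_i\|_\infty$, we have
$$\Phi_{g_1,g_2}(u_1,u_2)\le \|g_1\|_\infty\|g_2\|_\infty\, Q(v_1,v_2), \qquad v_i:=F_0(u_i),\quad F_0(t):=\frac{e^{\alpha_N|t|^{N/(N-1)}}}{(1+|t|)^{\frac{N}{2(N-1)}}},$$
where
$$Q(v,w):=\int_0^1 r^{N-1}v(r)\ln\frac1r\int_0^r\rho^{N-1}w(\rho)\,d\rho\,dr .$$
So it suffices to bound $Q(v_1,v_2)$ uniformly in $u_1,u_2\in\cB_{1,rad}$.

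\textbf{Step 2: a Cauchy--Schwarz inequality for $Q$.} For $r,\rho\in(0,1]$ we have $\ln\frac1r\,1_{\{\rho<r\}}\le \ln\frac{1}{\max\{r,\rho\}}$. Moreover, the symmetric kernel has the representation
$$\ln\frac{1}{\max\{r,\rho\}}=\int_0^1 1_{\{r<s\}}1_{\{\rho<s\}}\,\frac{ds}{s}.$$
Hence, for nonnegative $v,w$ supported in $[0,1]$,
$$Q(v,w)\le \tilde Q(v,w):=\int_0^1\Bigl(\int_0^s r^{N-1}v\,dr\Bigr)\Bigl(\int_0^s \rho^{N-1}w\,d\rho\Bigr)\frac{ds}{s}.$$
The form $\tilde Q$ is a positive semidefinite symmetric bilinear form, so Cauchy--Schwarz gives $\tilde Q(v,w)\le \tilde Q(v,v)^{1/2}\tilde Q(w,w)^{1/2}$. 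By \eqref{newton-1-argument} and Fubini, $\tilde Q(v,v)=\frac{b_0(1_{B_1}v)}{2\omega_{N-1}^2}$. Applied to $v=1_{B_1}F_0(u)$ with $u$ radial, this reads $\tilde Q(v,v)=\Phi_{F_0}(u)/(2\omega_{N-1}^2)$.

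\textbf{Step 3: uniform bound.} It remains to show that $\sup_{\cB_1}\Phi_{F_0}<\infty$. Theorem~\ref{sec:introduction-main-thm-C-F2}(1) requires an increasing nonlinearity, and $F_0$ need not be increasing near $0$. We therefore use the increasing majorant $\tilde F(t):=\max_{|s|\le |t|}F_0(s)$. Since $F_0'>0$ for $t$ larger than some $t_0$, we have $\tilde F=F_0$ for large $|t|$, while $\tilde F$ is bounded on $[-t_0,t_0]$. Thus $\tilde F\le C e^{\alpha_N|t|^{N/(N-1)}}(1+|t|)^{-\frac{N}{2(N-1)}}$, i.e. $\tilde F$ has at most $\beta$-critical growth with $\beta=-\frac{N}{2(N-1)}$, and Theorem~\ref{sec:introduction-main-thm-C-F2}(1) gives $\sup_{\cB_1}\Phi_{\tilde F}<\infty$.

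We also need $\tilde Q(F_0(u))\le\tilde Q(\tilde F(u))$. This holds because $\tilde Q(v,v)$ is monotone in $v\ge 0$: it is an integral of squares of nonnegative partial integrals. We do \emph{not} use monotonicity of $b_0$ itself, which fails in general. Combining Steps 1--3,
$$\Phi_{g_1,g_2}(u_1,u_2)\le \frac{\|g_1\|_\infty\|g_2\|_\infty}{2\omega_{N-1}^2}\,\sup_{\cB_1}\Phi_{\tilde F}<\infty .$$

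The main obstacle is Step 2, namely passing from the non-symmetric kernel $\ln\frac1r\,1_{\{\rho<r\}}$ to a positive definite form that is identified with $\Phi$. The representation of $\ln\frac{1}{\max\{r,\rho\}}$ as $\int 1_{\{r<s\}}1_{\{\rho<s\}}\frac{ds}{s}$ handles both issues at once: it dominates the original kernel and makes positive definiteness and monotonicity evident.
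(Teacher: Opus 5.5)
Your argument is correct, modulo the remarks below, and it takes a genuinely different route. The paper's proof is a one-line appeal to the planar result \cite[Lemma 2.10]{CiWe2} with $\beta_1=\beta_2=-\frac{N}{2(N-1)}$. You instead reduce the non-symmetric bilinear form to the one-function functional. Dominating the kernel $\ln\frac1r\,1_{\{\rho<r\}}$ by $\ln\frac{1}{\max\{r,\rho\}}=\int_0^1 1_{\{r<s\}}1_{\{\rho<s\}}\frac{ds}{s}$ gives positive semidefiniteness (hence Cauchy--Schwarz) and monotonicity in $v$ at once. After that, only the known finiteness of $\sup_{\cB_1}\Phi$ in the borderline growth class is needed. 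This buys a self-contained proof valid in every dimension, instead of a transplant of a two-dimensional lemma. The price is that Cauchy--Schwarz forces you to bound each factor separately, so the method is tied to equal exponents $\beta_1=\beta_2$; that is exactly the case of the lemma. Your increasing majorant $\tilde F$ is correct but not needed: Proposition~\ref{C-finiteness} with $g\equiv 1$ already gives $\Phi_{F_0}\le \Phi^+_{F_0}\le m_1^+(F_0)<\infty$ without any monotonicity assumption.

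Three remarks on details.

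First, your reading of the weights as $r^{N-1},\rho^{N-1}$ is forced, not merely convenient. Take $N\ge 3$, $g_1=g_2\equiv 1$, and the Moser functions $u_n$ from the proof of Proposition~\ref{C-infiniteness}, for which $e^{\alpha_N u_n^{N/(N-1)}}=n^N$ on $[0,\frac1n]$. Then the literal expression with weights $r,\rho$ is at least of order $n^{2N-4}$, so the lemma as printed is false.

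Second, by \eqref{newton-1-argument} one has $\tilde Q(v,v)=2Q(v,v)=b_0(1_{B_1}v)/\omega_{N-1}^2$, not $b_0(1_{B_1}v)/(2\omega_{N-1}^2)$. This only changes the constant.

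Third, a point you inherit from the paper rather than introduce. For $N\ge 3$ the function $y\mapsto \ln\frac{1}{|x-y|}$ is superharmonic, not harmonic, since $\Delta_y \ln\frac{1}{|x-y|}=-(N-2)|x-y|^{-2}$. Consequently, for $|x|=r$, its average over the sphere $\{|y|=\rho\}$ equals $\ln\frac{1}{\max\{r,\rho\}}$ minus a quantity in $[0,\ln 2]$; for $N=3$ and $r=\rho$ the defect is $\ln 2-\frac12$. Hence \eqref{newton-1-argument} really only gives $b_0(1_{B_1}v)\le \omega_{N-1}^2\tilde Q(v,v)$, which is the wrong direction for your Step 2. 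The repair is one line: $\tilde Q(v,v)\le b_0(1_{B_1}v)/\omega_{N-1}^2+\ln 2\,\bigl(\int_0^1 r^{N-1}v\,dr\bigr)^2$. For $v=F_0(u)$ (or $\tilde F(u)$) the last term is bounded uniformly on $\cB_1$ by the Trudinger--Moser inequality.
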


\begin{proof}
The result follows directly from \cite[Lemma 2.10]{CiWe2}, applied with $\beta_1 = \beta_2 = -\frac{N}{2(N-1)}$.
\end{proof}

\begin{lemma}
	\label{C-B-1-restimate}
	Suppose that $g$ satisfies $(g_0)$ and $C_g \in [0, \infty)$. Then the functional $\Phi_-$ is uniformly bounded on $\cB_1$, i.e., we have $m_1^-(F)= \sup \limits_{\cB_1}\Phi^-< \infty$.   
\end{lemma}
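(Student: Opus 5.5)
The plan is to bound $\Phi^-(u)$ uniformly on $\cB_1$ by exploiting that $\ln^- \frac{1}{|x-y|} = \ln^+|x-y|$ vanishes unless $|x-y|>1$, and that on $B_1\times B_1$ we always have $|x-y|<2$. Hence
\[
\Phi^-(u) = \int_{B_1}\int_{B_1} \ln^+|x-y|\, F(u(x))F(u(y))\,dx\,dy \le \ln 2 \Bigl(\int_{B_1} F(u)\,dx\Bigr)^2,
\]
so it suffices to show that $\sup_{u\in\cB_1}\int_{B_1}F(u)\,dx<\infty$. This reduces a nonlocal estimate to a local one of Trudinger--Moser type.

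For the local bound I would use the form \eqref{eq:F-shape}. If $C_g<\infty$ and $g$ is continuous (or at least locally bounded, which follows from $(g_0)$), then $g$ is bounded on $[0,\infty)$. Indeed, $\limsup_{t\to\infty} g(t) = C_g$ gives $g\le C_g+1$ for $t\ge T$, while $(g_0)$ gives $g\le \gamma e^{\gamma T^{N/(N-1)}}$ on $[0,T]$. Writing $M:=\sup g<\infty$ and using $(1+|t|)^{-\frac{N}{2(N-1)}}\le 1$, we get $F(t)\le M e^{\alpha_N|t|^{N/(N-1)}}$. The classical Moser inequality \cites{trudinger,moser} then yields $\int_{B_1} e^{\alpha_N |u|^{N/(N-1)}}\,dx\le C_N$ for all $u\in\cB_1$. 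This gives $\int_{B_1}F(u)\le M C_N$, and therefore $\Phi^-(u)\le \ln 2\,(MC_N)^2$ uniformly on $\cB_1$.

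The only point needing care is the boundedness of $g$. Here I will rely on the reading that $C_g\in[0,\infty)$ and $(g_0)$ together control $g$ on compacts and near infinity as above. If $g$ were merely measurable, $(g_0)$ still bounds it on $[0,T]$, so no continuity is actually required. No symmetrization or monotonicity (so no use of $(g_1)$) is needed, consistent with the hypotheses of the lemma.
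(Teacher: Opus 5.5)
Your proposal is correct and is essentially the paper's proof: you bound $\ln^+|x-y|\le \ln 2$ on $B_1\times B_1$ to get $\Phi^-(u)\le(\ln 2)\|F(u)\|_{L^1(B_1)}^2$, and then bound $\|F(u)\|_{L^1(B_1)}$ uniformly using $F(t)\le c\,e^{\alpha_N|t|^{N/(N-1)}}$ and the Trudinger--Moser inequality. Your explicit check that $(g_0)$ together with $C_g<\infty$ forces $g$ to be bounded supplies the detail the paper covers only with the phrase ``by assumption.''
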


\begin{proof}
	Let $u \in \cB_1$. Then we have 
	$$
	\Phi_-(u)= b_-(F(u),F(u)) = \int_{B_1} \int_{B_1} \ln^- \!\frac{1}{|x-y|}\, F(u(x))F(u(y))\,dx dy \le (\ln 2) \|F(u)\|_{L^1(B_1)}^2
	$$
	where $\|F(u)\|_{L^1(B_1)} \le c_1 \int_{B_1}e^{\alpha_N |u|^{\frac{N}{N-1}}}\,dx \le c_2$
	with constants $c_1,c_2>0$ independent of $u$ by assumption and  the Trudinger-Moser inequality.
\end{proof}

Now Theorem~\ref{sec:introduction-main-thm-last}(i) is a direct consequence of the following Proposition.

\begin{proposition}
	\label{C-finiteness}
	Suppose that $g$ satisfies $(g_0)$ and $C_g \in [0, \infty)$. Then we have 
	$$
	m_1(F) \le m_1^+(F)< \infty,\qquad \text{where}\qquad  m_1^+(F)= \sup_{\cB_1}\Phi^+. 
	$$
\end{proposition}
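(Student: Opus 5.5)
The inequality $m_1(F)\le m_1^+(F)$ is immediate, since $\Phi=\Phi^+-\Phi^-\le\Phi^+$ pointwise on $\cB_1$. The real task is to show $m_1^+(F)<\infty$ without assuming that $F$ is increasing, because under $(g_0)$ alone Corollary~\ref{reduction-to-the-radial-case} is not available.

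The idea is to replace $F$ by an increasing majorant. Since $C_g<\infty$ and $g$ is bounded on compact sets by \eqref{eq:g-0-first}, we have $M:=\sup_{t\ge0}g(t)<\infty$. Hence
$$
F(t)\le \tilde F(t):= M\,\frac{e^{\alpha_N|t|^{\frac{N}{N-1}}}}{(1+|t|)^{\frac{N}{2(N-1)}}}.
$$
The function $\tilde F$ need not be increasing near $0$, because the factor $(1+t)^{-N/(2(N-1))}$ decreases. So we pass to
$$
\hat F(t):=\sup_{0\le s\le |t|}\tilde F(s).
$$
This is even, continuous, nondecreasing on $[0,\infty)$, and satisfies $\hat F\ge F\ge 0$. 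Moreover $\hat F\le C\tilde F$ for $|t|\ge1$ with some constant $C$, since $\tilde F(t)\to\infty$ and $\tilde F$ is eventually increasing. Consequently $\hat F(t)\le c\,e^{\alpha_N|t|^{N/(N-1)}}(1+|t|)^{-N/(2(N-1))}$ for all $t$, i.e.\ $\hat F$ has at most $\beta$-critical growth with $\beta=-\frac{N}{2(N-1)}$.

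Because $\ln^+\ge0$ and $0\le F\le\hat F$, we get $\Phi^+_F(u)\le\Phi^+_{\hat F}(u)$ for every $u\in\cB_1$. For $\hat F$ the Riesz inequality \eqref{eq:Riesz-rearrangement}, combined with $[1_{B_1}\hat F(u)]^*=1_{B_1}\hat F(u^*)$ and the P\'olya--Szeg\H{o} inequality, reduces matters to $u\in\cB_1^*$, exactly as in Corollary~\ref{reduction-to-the-radial-case}.

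For radial $u$, Newton's formula \eqref{newton-1-argument} gives, up to the factor $\omega_{N-1}^2$,
$$
b_+(v)\le b_0(v)+b_-(v)
$$
with $v=1_{B_1}\hat F(u)$. This is twice a radial double integral of the form $\Phi_{g_1,g_2}$ in Lemma~\ref{general-boundedness}, with $r^{N-1}$ weights. Taking $g_1=g_2$ to be a bounded smooth even function dominating $\hat F\,e^{-\alpha_N|t|^{N/(N-1)}}(1+|t|)^{N/(2(N-1))}$, that lemma (equivalently \cite[Theorem 1.2(i)]{CC} applied to the increasing $\hat F$) bounds $b_0(v)$ uniformly. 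The remaining term $b_-(v)$ is bounded as in Lemma~\ref{C-B-1-restimate}, using $\|\hat F(u)\|_{L^1}\le c\int_{B_1}e^{\alpha_N|u|^{N/(N-1)}}$ and Trudinger--Moser.

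The main obstacle is the first step: checking that $\hat F$ retains the critical exponent $-\frac{N}{2(N-1)}$ uniformly, so that the borderline radial estimate still applies. This reduces to the elementary fact that $\tilde F$ is increasing for $t$ beyond some $t_1$ and bounded on $[0,t_1]$. A secondary point is that Lemma~\ref{general-boundedness} is stated with weights $r$ rather than $r^{N-1}$. I would interpret it, as the reference to \cite{CiWe2,CC} intends, in the $N$-dimensional radial form; if in doubt, I would invoke \cite[Theorem 1.2(i)]{CC} for $\hat F$ directly.
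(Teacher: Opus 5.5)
Your proof is correct and uses the same idea as the paper: dominate $F$ by an even, nondecreasing majorant with at most $(-\frac{N}{2(N-1)})$-critical growth, compare the $\Phi^+$ values using $\ln^+\ge 0$, and then invoke the known result of \cite{CC} for increasing nonlinearities. The differences are cosmetic. The paper takes the explicit majorant $F_\kappa(t)=\kappa\frac{1+e^{1-t}t}{(1+t)^{N/(2(N-1))}}e^{\alpha_N|t|^{N/(N-1)}}$ and cites \cite[Prop. 4.2]{CC} for $m_1^+(F_\kappa)<\infty$ directly, whereas you use the monotone envelope $\hat F$ and recover $m_1^+(\hat F)<\infty$ from $m_1(\hat F)<\infty$ together with the Trudinger--Moser bound on $b_-$ (with \cite[Theorem 1.2(i)]{CC} this splitting holds for every $u\in\cB_1$, so your radial reduction can be skipped).
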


\begin{proof}
We note that $F$ has at most $(-\frac{N}{2(N-1)})$-critical growth if $C_g< \infty$, and therefore the assertion is already proved in \cite[Prop. 4.2]{CC} under the additional assumption that $F$ is increasing. So it remains to prove $m_1^+(F)< \infty$ without this extra assumption. Since $g$ satisfies $(g_0)$ and $C_g < \infty$, we can choose $\kappa >0$ large enough such that 
  $$
  F(t) \le F_\kappa(t):= \kappa \frac{1+ e^{1-t}t}{(1+t)^{\frac{N}{2(N-1)}}}e^{\alpha_N |t|^{\frac{N}{N-1}}}
  $$
  Moreover, the function $F_\kappa$ is increasing
  and also has at most $(-\frac{N}{2(N-1)})$-critical growth, so we have
  $m_1^+(F_\kappa) <\infty$ by \cite[Prop. 4.2]{CC}.
Moreover, 
\begin{align*}
  \Phi^+(u)=  \int_{B_1} \int_{B_1} \ln^+ \frac{1}{|x-y|}\, F(u(x))F(u(y))\,dx dy &\le  \int_{B_1} \int_{B_1} \ln^+ \frac{1}{|x-y|}\, F_\kappa(u(x))F_\kappa(u(y))\,dx dy\\
       &\le m_1^+(F_\kappa)\qquad \text{for $u \in \cB_1$,}
\end{align*}
which shows the required finiteness of $m_1^+(F)$.
\end{proof}

\section{The second part of Theorem \ref{sec:introduction-main-thm-last}: the unbounded case.}
\label{sec:unbounded-case}

In this section we shall complete the proof of Theorem~\ref{sec:introduction-main-thm-last}(ii), which we restate in the following Proposition.

\begin{proposition}
\label{C-infiniteness}
Suppose that $g$ satisfies $(g_0)$ and 
 $\lim \limits_{s \to + \infty} {g(s)}= \infty$.
 Then there exists a sequence of functions $u_n \in \cB_1 \cap L^\infty(B_1)$ with $\Phi(u_n) \to \infty$ as $n \to \infty$.
\end{proposition}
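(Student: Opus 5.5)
Since the critical exponent $\beta=-\frac{N}{2(N-1)}$ is exactly the threshold at which the Moser functions produce a bounded (but nonvanishing) contribution to $\Phi$, the growth $g\to\infty$ should make $\Phi$ blow up along truncated Moser functions. I will therefore work with the classical Moser sequence
$$
u_n(x)=\omega_{N-1}^{-1/N}\begin{cases} n^{\frac{N-1}{N}}, & |x|\le e^{-n/N},\\ n^{-1/N}\,N\ln\frac{1}{|x|}, & e^{-n/N}\le |x|\le 1,\end{cases}
$$
possibly with an extra scaling to simplify constants. These are radial, bounded, lie in $\cB_1$ with $|\nabla u_n|_N=1$, and satisfy $\alpha_N u_n^{N/(N-1)}=N n$ on the small ball $B_{r_n}$, $r_n=e^{-n/N}$. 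Since $\Phi=\Phi^+-\Phi^-$, it suffices to show $\Phi^-(u_n)$ stays bounded while $\Phi^+(u_n)\to\infty$.

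For $\Phi^-$, the proof of Lemma~\ref{C-B-1-restimate} gives $\Phi^-(u)\le(\ln 2)\|F(u)\|_{L^1(B_1)}^2$. However, that lemma used $C_g<\infty$, so I cannot just invoke the Trudinger--Moser bound. Instead I estimate $\|F(u_n)\|_{L^1}$ directly. On $B_{r_n}$ the contribution is $|B_{r_n}|e^{Nn}(1+t_n)^{-\frac{N}{2(N-1)}}g(t_n)$ with $t_n=(n/\alpha_N')^{(N-1)/N}$ (the appropriate constant). This equals a constant times $n^{-1/2}g(t_n)$. On the annulus, after the substitution $s=N\ln(1/r)$, the integrand becomes $e^{s^{N/(N-1)}n^{-1/(N-1)}-s}$ times the $g$-factor. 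This is an integral that is bounded for $s\le n$ when $g$ is controlled at the corresponding level. Both pieces may grow if $g$ grows, so the clean route is to compare $\Phi^+$ and $\Phi^-$ rather than bound $\Phi^-$ absolutely. Indeed, $\Phi^-(u)\le \ln 2\,\|F(u)\|_1^2$, while by \eqref{newton-1-argument}
$$
\Phi^+(u)\ge \Phi(u)+\Phi^-(u),\qquad \Phi(u_n)\ge 2\omega_{N-1}^2\int_0^{r_n} r^{N-1}F(u_n)\ln\tfrac1r\int_0^r\rho^{N-1}F(u_n)\,d\rho\,dr + (\text{cross terms}\ge -C\|F(u_n)\|_1^2).
$$
A simpler way to organize this is to localize. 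Use \eqref{newton-1-argument} for $\Phi$ itself, splitting $v=F(u_n)1_{B_1}=v_1+v_2$ with $v_1=v1_{B_{r_n}}$. Since $\ln\frac1r\ge0$ on $B_1$, all terms in \eqref{newton-1-argument} are nonnegative, so $\Phi(u_n)\ge b_0(v_1)/\omega_{N-1}^2\cdot\omega_{N-1}^2$. Moreover $v_1$ is constant on $B_{r_n}$, so
$$
b_0(v_1)=2\omega_{N-1}^2 F(t_n)^2\int_0^{r_n}r^{N-1}\ln\tfrac1r\frac{r^N}{N}\,dr \approx c\,F(t_n)^2 r_n^{2N}\ln\tfrac1{r_n}= c\, n\, e^{-2n}F(t_n)^2.
$$
Now $F(t_n)=e^{n}(1+t_n)^{-\frac{N}{2(N-1)}}g(t_n)\sim e^n n^{-1/2}g(t_n)$ (after matching the exponents, which requires the normalization with $\alpha_N t_n^{N/(N-1)}=n$, i.e. a dilation so that the ball has radius $e^{-n/N}$ and $e^{Nn\cdot\frac1N}$ is the correct balance). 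Hence $b_0(v_1)\ge c\,g(t_n)^2\to\infty$.

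The main obstacle is getting the exponent bookkeeping exact. I need $|B_{r_n}|^2 e^{2\alpha_N t_n^{N/(N-1)}}\ln(1/r_n)\,(1+t_n)^{-N/(N-1)}$ to be bounded below by a positive constant; the power $-\frac{N}{2(N-1)}$ is precisely what cancels the factor $\ln(1/r_n)\sim n\sim t_n^{N/(N-1)}$. With the Moser normalization one has $\alpha_N t_n^{N/(N-1)}=n$ and $|B_{r_n}|\sim e^{-n}$, so this product is indeed $\asymp n\cdot n^{-1}=1$. I will verify this carefully, possibly by testing with $u_n$ rescaled by a factor $\lambda\le1$ to stay in $\cB_1$. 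The positivity of $\ln(1/|x-y|)$ is not available pointwise, since $|x-y|<2$; this is why I use the radial formula \eqref{newton-1-argument}, which has a nonnegative integrand on $[0,1]$, together with the nonnegativity of $F$. That disposes of $\Phi^-$ without any bound. The result is $\Phi(u_n)\ge c\,g(t_n)^2\to\infty$ with $u_n\in\cB_1\cap L^\infty(B_1)$.
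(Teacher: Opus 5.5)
Your route is the paper's. It uses the same Moser functions; your plateau radius $r_n=e^{-n/N}$ versus the paper's $1/n$ is only a reparametrization. It keeps only the part $r\le r_n$ of \eqref{newton-1-argument}, where $F(u_n)$ is constant, and obtains $\Phi(u_n)\ge c\,g(t_n)^2$ because $\ln(1/r_n)\sim n$ cancels $(1+t_n)^{-N/(N-1)}\sim n^{-1}$.

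There is one slip first. The $u_n$ you display is not in $\cB_1$: on the annulus $|\nabla u_n|_N^N=N^N n^{-1}\ln(1/r_n)=N^{N-1}$. You must multiply it by $N^{-(N-1)/N}$, i.e. take $u_n=\omega_{N-1}^{-1/N}(N/n)^{1/N}\ln\frac{1}{|x|}$ on the annulus. Then the plateau value satisfies $\alpha_N t_n^{N/(N-1)}=n$ (not $Nn$), which is the normalization your final bookkeeping actually uses.

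The real gap is the step ``all terms in \eqref{newton-1-argument} are nonnegative, hence $\Phi(u_n)\ge b_0(v_1)$''. The paper's proof contains the same step. Formula \eqref{newton-1-argument} is Newton's theorem for the kernel $\ln\frac{1}{|x-y|}$. It rests on the mean value property of $y\mapsto \ln\frac{1}{|x-y|}$, hence on harmonicity of $\ln|z|$ off the origin, which holds only for $N=2$.

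For $N\ge 3$ one has $\Delta \ln|z|=(N-2)|z|^{-2}>0$. By subharmonicity, the spherical mean of $\ln\frac{1}{|x-y|}$ over $|y|=\rho$ is strictly smaller than $\ln\frac{1}{\max\{|x|,\rho\}}$. For instance, for $N=3$ and $|x|=\rho=1$ it equals $\frac12-\ln 2<0$, where the formula puts $0$. So for $N\ge3$, \eqref{newton-1-argument} is only an upper bound for $b_0(v)$. Moreover, the cross terms $2b_0(v_1,v_2)+b_0(v_2)$ can be negative, coming from pairs with $|x-y|>1$ near $\partial B_1$. As written, the argument (yours and the paper's) is justified only for $N=2$.

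Your abandoned first idea of comparing $\Phi^+$ and $\Phi^-$ closes the gap for every $N$. Put $v_n=F(u_n)1_{B_1}$ and $a_n=\|v_n\|_{L^1(B_{1/4})}$. Since $|x-y|<2r_n$ on $B_{r_n}\times B_{r_n}$ and $|x-y|<\frac12$ on $B_{1/4}\times B_{1/4}$,
\[
\Phi^+(u_n)\ge \max\Bigl\{|B_{r_n}|^2F(t_n)^2\ln\tfrac{1}{2r_n},\ (\ln 2)\,a_n^2\Bigr\}\ge \max\bigl\{c\,g(t_n)^2,\ (\ln 2)\,a_n^2\bigr\}.
\]
On the other side, $\ln^-\frac{1}{|x-y|}\neq 0$ forces $|x-y|>1$, hence $\max\{|x|,|y|\}>\frac12$. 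Moreover $u_n\le \omega_{N-1}^{-1/N}(N/n)^{1/N}\ln 4\to 0$ on $B_1\setminus B_{1/4}$, and $F$ is locally bounded by $(g_0)$. Therefore
\[
\Phi^-(u_n)\le 2\ln 2\,\|v_n\|_{L^1(B_1)}\,\|v_n\|_{L^1(B_1\setminus B_{1/2})}\le C(a_n+1).
\]
Combining the two bounds,
\[
\Phi(u_n)\ge \tfrac{c}{2}g(t_n)^2+\tfrac{\ln 2}{2}a_n^2-C(a_n+1)\ge \tfrac{c}{2}g(t_n)^2-C'\to\infty.
\]
Absorbing the possibly growing mass $a_n$ into $\Phi^+$ is exactly what the radial formula was hiding.
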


\begin{proof}
Set $v_n = F(u_n)$.
For $n \in \N$, 
${n \geq 2}$, we now define 
 $u_n= m_n \in W^{1,N}_0(B_1) \cap L^\infty(B_1)$ as in \cite[p. 309]{Doo-Marcos}, namely
\[
m_n:=
\begin{cases}
	\frac{1}{{\omega^{1/N}_{N-1}}} {(\ln n)^{(N-1)/N}},  \quad  \  0 \leq |x| \leq \frac{1}{n}, \\
\frac{1}{{\omega^{1/N}_{N-1}}} \frac{\ln(\frac{1}{\abs{x}})}{(\ln n)^{1/N}}, \quad  \,\,\,\, \,\,\,\,\, \,\,\, \ \frac{1}{n} \leq |x| \leq 1.
\end{cases}
\]
 
As noted in \cite[p. 310]{Doo-Marcos}, we then have $|\nabla u_n|_N \le 1$ for $n$ large and thus $u_n \in \cB_1^*$. Moreover, $v_n :=F(u_n) \in L^\infty(B_1)$ and therefore 
$$
\Phi_\pm(u_n) = b_\pm(v_n,v_n) < \infty \qquad \text{for $n \in \N$.}
$$
We have, for $n$ large,  
$$
v_n = g((\frac{(\ln n)^{N-1}}{\omega_{N-1}})^{\frac{1}{N}}) e^{\alpha_N \Big(\frac{(\ln n)}{\omega^{1/(N-1)}_{N-1}}\Big)}\:\ge\: 
 c_{1} \frac{g((\frac{(\ln n)^{N-1}}{\omega_{N-1}})^{\frac{1}{N}})}{(\ln n)^{\frac{1}{2}}}n^{N}
\qquad \text{on $B_{\frac{1}{n}}(0)$}
$$
with some constant  $c_1>0$. We derive that
\begin{align*}
\frac{b_0(v_n,v_n)}{\omega_{N-1}^2}& \geq  2\int_{0}^{\frac{1}{n}} r^{N-1} v_n(r) \ln \frac{1}{r} \int_{0}^r \rho^{N-1}  v_n(\rho)d\rho dr \\ & \ge 2 c^2_{1} \frac{g^2((\frac{(\ln n)^{N-1}}{\omega_{N-1}})^{\frac{1}{N}})}{\ln n}n^{2N}
 \int_{0}^{\frac{1}{n}} r^{N-1} \ln \frac{1}{r} \int_{0}^{r} \rho^{N-1} d\rho dr\\
&= \frac{2}{N} c^2_{1} \frac{g^2((\frac{(\ln n)^{N-1}}{\omega_{N-1}})^{\frac{1}{N}})}{\ln n}n^{2N}
  \int_{0}^{\frac{1}{n}} r^{2N-1}\ln \frac{1}{r} dr \\
&= \frac{2}{N} c^2_{1} \frac{g^2((\frac{(\ln n)^{N-1}}{\omega_{N-1}})^{\frac{1}{N}})}{\ln n}n^{2N} \frac{1}{2Nn^{2N}}\Big(\ln n +\frac{1}{2N}\Big)\\ &
\ge \frac{1}{{N^2}} c_1^2 g^2((\frac{(\ln n)^{N-1}}{\omega_{N-1}})^{\frac{1}{N}})
\end{align*}

so that $b_0(v_n,v_n) \to \infty$ as $n \to \infty$.  This shows that $\Phi(u_n) = b_0(v_n,v_n)  \to  \infty$ as $n \to \infty$, 
as required. 
\end{proof}

\section{the proof of Theorem \ref{sec:introduction-main-thm-last}: the existence of maximizers in the subcritical case}
\label{sec:cont-prop-phi}
In this section we provide an abstract strong continuity result for the functional $\Phi$, which is partly based on \cite[Theorem 1.6]{Lions}. Moreover, we will complete the proof of Theorem~\ref{sec:introduction-main-thm-last}. 

\begin{proposition}
\label{abstract-strong-continuity}
Suppose that $g$ satisfies $(g_0)$ and $(g_1)$, and let $(u_n)_n$ be a sequence in $\cB_1^*$ with $u_n \weak u$ in $W^{1,N}_0(B_1)$. Suppose moreover that \underline{one} of the following conditions is satisfied:
\begin{itemize}
\item[(i)] $u \not =0$.
\item[(ii)] $C_g=0$, i.e., $\lim \limits_{t \to \infty}g(t)=0$.
\end{itemize}
 Then we have 
$$
\lim_{n \to \infty}\Phi(u_n)= \Phi(u).
$$
\end{proposition}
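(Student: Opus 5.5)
Since $\Phi = \Phi^+ - \Phi^-$ with $\Phi^\pm(u_n)=b_\pm(1_{B_1}F(u_n))$, I would prove convergence of each part separately. Passing to a subsequence, $u_n \to u$ a.e. in $B_1$ and in every $L^q(B_1)$; the convergence of the full sequence then follows by uniqueness of the limit. Since $F$ is continuous, $v_n:=F(u_n)\to v:=F(u)$ a.e. The kernel $\ln^-\frac{1}{|x-y|}$ is bounded by $\ln 2$ on $B_1\times B_1$. Hence $\Phi^-(u_n)\to\Phi^-(u)$ follows from Vitali's theorem once the functions $v_n$ are uniformly integrable in $L^1(B_1)$, and the same uniform integrability drives the $\Phi^+$ part. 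So the core of the proof is a uniform integrability statement of the form
\[
\lim_{M\to\infty}\sup_n \int_{B_1}\int_{B_1} \ln^+\frac{1}{|x-y|}\,v_n(x)v_n(y)\,1_{\{v_n(x)>M \text{ or } v_n(y)>M\}}\,dx\,dy = 0 .
\]
Combined with a.e. convergence of the truncations $\min(v_n,M)\to\min(v,M)$ and dominated convergence, this gives the claim.

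\textbf{Case (i), $u\neq0$.} Here I would use P.-L. Lions' concentration-compactness improvement of Trudinger--Moser \cite[Theorem 1.6]{Lions}. If $|\nabla u_n|_N\le1$, $u_n\weak u\neq0$, then for every $p<(1-|\nabla u|_N^N)^{-1/(N-1)}$ we have $\sup_n\int_{B_1}e^{p\alpha_N|u_n|^{N/(N-1)}}<\infty$. Since $g$ satisfies $(g_0)$ and we may choose $p>1$ close to $1$, the exponent $\gamma$ from $(g_0)$ must be absorbed into the margin. I would handle this as follows: for $|t|$ large, $(g_0)$ alone is too weak, but from the radial lemma (for $u_n\in\cB_1^*$ we have $u_n(r)\le \omega_{N-1}^{-1/N}(\ln\frac1r)^{(N-1)/N}$), $u_n$ is uniformly bounded away from the origin. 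Large values therefore only occur in small balls $B_\delta$, where Lions' bound with some $p>1$ applies. To be safe, I would replace $e^{\gamma|t|^{N/(N-1)}}$ by a bound using that $F$ is increasing: $F(t)\le c\,e^{(\alpha_N+\gamma)|t|^{N/(N-1)}}$ is not good enough, so instead I would rely on $F(t)\le C e^{\alpha_N(1+\eta)|t|^{N/(N-1)}}$, where $\eta$ is small. This requires $g(t)\le C_\eta e^{\eta|t|^{N/(N-1)}}$, which I expect to follow from $(g_1)$-type monotonicity or simply from the structure of the examples. If it does not, I would restrict to the case $g$ bounded by $C_g<\infty$, as in the application. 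Then $v_n$ is bounded in $L^p(B_1)$ for some $p>1$. By Hölder and the fact that $\ln^+\frac1{|\cdot|}\in L^s$ for all $s<\infty$ (Young's inequality), $\int\!\!\int \ln^+\frac{1}{|x-y|}\,v_n(x)v_n(y)$ restricted to $\{v_n>M\}$ is bounded by $C\|v_n\|_p\|v_n1_{\{v_n>M\}}\|_{q}$ with $q<p$, and this tends to $0$ uniformly in $n$ as $M\to\infty$.

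\textbf{Case (ii), $C_g=0$.} Here no $L^p$ gain is available, and I expect this to be the main obstacle. I would use the radial representation \eqref{newton-1-argument} and Lemma \ref{general-boundedness}. Given $\varepsilon>0$, choose $T$ with $g\le\varepsilon$ on $[T,\infty)$. Write $g=g1_{[0,T]}+g1_{(T,\infty)}$, suitably smoothed, so $g=g_{1}+g_{2}$ with $g_1$ compactly supported and $\sup g_2\le\varepsilon$ (up to a fixed bounded part). Then $\Phi^+(u_n)$ splits into four bilinear terms. The $g_1$--$g_1$ term involves $F_1(u_n)\le C_T$, bounded, so it converges by dominated convergence. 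Every term involving $g_2$ is bounded by $\varepsilon\cdot C$ uniformly on $\cB_{1,rad}$ by Lemma \ref{general-boundedness}, since that lemma gives a bound linear in $\sup g_i$ by bilinearity. The same holds for the limit $u$. Letting $n\to\infty$ and then $\varepsilon\to0$ gives $\Phi^+(u_n)\to\Phi^+(u)$, and $\Phi^-$ is handled identically with the bounded kernel. The care needed is in checking that the bound in Lemma \ref{general-boundedness} indeed scales with $\|g_i\|_\infty$ (bilinearity of $b_+$ in $(v,w)$ makes this automatic after normalizing $g_2/\varepsilon$), and that the cross terms are handled by \eqref{newton-2-arguments} in both orderings.
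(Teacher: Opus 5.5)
Your proof is correct, provided the growth restriction you mention in case (i) becomes an explicit hypothesis. In case (ii) you take a genuinely different route from the paper.

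In case (i) your mechanism is the paper's. Lions' improved Trudinger--Moser bound gives $\sup_n|F(u_n)|_p<\infty$ for some $p>1$. The paper then upgrades $L^1$ convergence to $L^s$ convergence by interpolation and estimates the difference inside Newton's radial formula. You use a.e.\ convergence, truncation and Young's inequality with $\ln^+\frac{1}{|\cdot|}\in L^{p'}$; this is equally short and needs no radial symmetry.

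However, the $L^p$ bound requires $\log g(t)=o(t^{N/(N-1)})$. This does \emph{not} follow from $(g_1)$, which only bounds $g'$ from below. The detour through the radial lemma does not help either: the margin $p-1$ in Lions' bound cannot absorb a factor $e^{\gamma t^{N/(N-1)}}$ near the origin.

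The restriction is in fact necessary. Take $g(t)=(1+t)^{\frac{N}{2(N-1)}}e^{\gamma t^{N/(N-1)}}$, which satisfies $(g_0)$ and $(g_1)$. Let $u_n=a\,m_n+b\phi$, where $m_n$ is as in the proof of Proposition~\ref{C-infiniteness} and $\phi\ge0$ is a fixed radially decreasing function, with $a^N+b^N|\nabla\phi|_N^N<1$ and $(1+\gamma/\alpha_N)a^{N/(N-1)}>1$. Then $u_n$ lies in $\cB_1^*$ for large $n$ and converges weakly to $b\phi\neq0$, yet $\Phi(u_n)\to\infty$ by the computation in that proof. The paper makes the same restriction tacitly when it asserts that $v_n$ is bounded in $L^{s_0}$. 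So assuming $C_g<\infty$ in case (i), as in every application, is the right fix.

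In case (ii) the paper first reduces to $u=0$. It uses radial monotonicity to get $u_n\to0$ locally uniformly on $(0,1]$, subtracts $F(0)$, and cuts Newton's integral at a radius $\delta$. It then splits $(0,\delta)$ into the sets $A_n^\pm$ where $u_n(r)\gtrless[\eps\ln\frac1r]^{(N-1)/N}$. On $A_n^-$ it uses the monotonicity of $F$ from $(g_1)$. On $A_n^+$ the factor $(1+[\eps\ln\frac1r]^{(N-1)/N})^{-N/(N-1)}$ absorbs $\ln\frac1r$ and $C_\eps(\delta)\to0$ gives smallness. Once $u=0$, only the classical Trudinger--Moser inequality is used.

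You instead cut $g$ at a height $T$ with a continuous cutoff, writing $F=F_1+F_2$. Here $F_1$ is bounded and continuous, so its contribution converges by dominated convergence. The tail satisfies $F_2\le\eps F_0$ with $F_0(t)=e^{\alpha_N|t|^{N/(N-1)}}(1+|t|)^{-\frac{N}{2(N-1)}}$. Your route needs neither $u=0$, nor radial monotonicity, nor $(g_1)$, and it works for any weakly convergent sequence in $\cB_1$. The price is that it rests on the sharp critical bound $m_1^+(F_0)<\infty$, i.e.\ Proposition~\ref{C-finiteness} with $g\equiv1$, which comes from \cite{CC}.

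Two small repairs are needed:
\begin{itemize}
\item Get the uniformity in $\eps$ from monotonicity rather than by normalizing $g_2/\eps$, since Lemma~\ref{general-boundedness} only gives a constant depending on the pair of functions. All integrands are nonnegative, so for every $w\in\cB_1$, in particular $u_n$ and $u$, you have $b_+(F_1(w),F_2(w))\le\eps\|g\|_{L^\infty}m_1^+(F_0)$ and $b_+(F_2(w))\le\eps^2m_1^+(F_0)$.
\item The Newton-type expression in that lemma represents $b_0$, not $b_+$. Either work with $b_0$ throughout, or add the trivial bound for $b_-$.
\end{itemize}
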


\begin{proof}
Since $u_n \in \cB_1^*$ for all $n$, it is easy to deduce from the weak convergence $u_n \weak u$ in $W^{1,N}_0(B_1)$ that $u \in \cB_1^*$.

We now assume (i) first, so we assume that $u \not = 0$. Then \cite[Theorem 1.6]{Lions} implies that 
  \begin{equation}
    \label{eq:ii-l-supercritical-boundedness}
\int_{B_1}  e^{(\alpha_N +t )u_n^{\frac{N}{N-1}} } \ dx \qquad \text{is bounded for some $t>0$,}
\end{equation}
 and thus 
  \begin{equation}
    \label{eq:ii-l-1-convergence}
e^{\alpha_N u_n^{\frac{N}{N-1}}} \to e^{\alpha_N u^{\frac{N}{N-1}} }  \qquad \text{in $L^1(B_1)$.}
  \end{equation}
Set $v_n:= 1_{B_1} F(u_n)$ for $n \in \N$ and $v:= 1_{B_1}F(u)$. By \eqref{eq:ii-l-supercritical-boundedness},  $v_n$ is bounded in  $L^{s_0}(\R^N)$ with $s_0=1 + \frac{t}{\alpha_N}>1$. Moreover, since 
$$
v_n \to v\qquad \text{in $L^1(B_1)$,}
$$
interpolation yields that 
$$
v_n \to v  \qquad \text{in $L^s(\R^N)\qquad$ for $1 \le s < s_0$.}
$$
Moreover,  
\begin{align*}
&\frac{\Phi(u_n)-\Phi(u)}{2\omega_{N-1}^2}= \int_{0}^1 r^{N-1} v_n(r) \ln \frac{1}{r} \int_{0}^r \rho^{N-1}  v_n(\rho) d\rho dr - \int_{0}^1 r^{N-1} v(r) \ln \frac{1}{r} \int_{0}^r \rho^{N-1}  v(\rho) d\rho dr\\
&= \int_{0}^1 r^{N-1} v_n(r) \ln \frac{1}{r} \int_{0}^r \rho^{N-1}  [v_n(\rho)-v(\rho)] d\rho dr + \int_{0}^1 r^{N-1} [v_n(r)-v(r)] \ln \frac{1}{r} \int_{0}^r \rho^{N-1}  v(\rho) d\rho dr
\end{align*}
where, for fixed $s \in (1,s_0)$,  
\begin{align*}
&\Bigl|\int_{0}^1 r^{N-1} v_n(r) \ln \frac{1}{r} \int_{0}^r \rho^{N-1}  [v_n(\rho)-v(\rho)] d\rho dr\Bigr|\le \frac{|v_n-v|_{s}}{\omega_{N-1}} \int_{0}^1 r^{N-1} |B_r|^{\frac{1}{s'}}  v_n(r) \ln \frac{1}{r}  dr  \\
&=\frac{|v_n-v|_{s}}{\omega_{N-1}} \int_{0}^1 r^{N-1} \bigg|\frac{r^N \pi^{\frac{N}{2}}}{\Gamma(\frac{N}{2}+1 )}\bigg|^{\frac{1}{s'}}  v_n(r) \ln \frac{1}{r}  dr\\
&\le \frac{|v_n-v|_{s} \pi^{\frac{N}{2s'}}}{\Gamma(\frac{N}{2}+1)\omega_{N-1}} 
\int_{0}^1 r^{N-1+\frac{N}{s'}} v_n(r) \ln \frac{1}{r} dr \le C |v_n-v|_{s} |v_n|_{1}  \to 0 \qquad \text{as $n \to \infty$ } 
\end{align*}
with $C:= \frac{\pi^{{\frac{N}{2s'}}}}{\Gamma(N/2+1)\omega_{N-1}^2}\sup \limits_{r \in (0,1]} r^{\frac{N}{s'}} \ln \frac{1}{r}$  and also 
\begin{align*}
&\Bigl|\int_{0}^1 r^{N-1} [v_n(r)-v(r)] \ln \frac{1}{r} \int_{0}^r \rho^{N-1}  v(\rho) d\rho dr\Bigr| \le \frac{|v|_{s}}{\omega_{N-1}} \int_{0}^1 r^{N-1} |B_r|^{\frac{1}{s'}}  [v_n(r)-v(r)] \ln \frac{1}{r}  dr  \\
&\le \frac{|v|_{s}\pi^{\frac{N}{2s'}}}{\Gamma(\frac{N}{2}+1)\omega_{N-1}} 
\int_{0}^1 r^{N-1+\frac{N}{2s'}} [v_n(r)-v(r)] \ln \frac{1}{r} dr \le C |v|_{s} |v_n-v|_{1} \to 0 \qquad \text{as $n \to \infty$.} 
\end{align*}
We thus conclude that 
\begin{equation*}
m_1(F) = \lim_{n \to \infty} \Phi(u_n) = \Phi(u).
\end{equation*}
Next we assume (ii), and by (i) we may assume that $u=0$, so $u_n \weak 0$ in $W^{1,N}_0(B_1)$. Since $W^{1,N}_0(B_1)$ is compactly embedded into $L^p(B_1)$ for $2<p<\infty$, we have 
\begin{equation}
  \label{eq:l-p-strong-zero}
u_n \to 0 \qquad \text{in $L^p(B_1)$ for $2 < p< \infty$.}
\end{equation}
Since $u_n \in \cB_1^*$ for every $n \in \N$, (\ref{eq:l-p-strong-zero}) implies that 
\begin{equation}
  \label{eq:l-p-strong-zero-locally-uniform}
u_n \to 0 \qquad \text{uniformly in $[\delta,1]$ for every $\delta  \in (0,1)$.}
\end{equation}
We now write $F= \kappa_0 + \tilde F$ with $\kappa_0=F(0)$, where the function $\tilde F = F- \kappa_0$ is also even, nonnegative
and increasing on $[0,\infty)$. Moreover, it satisfies $\tilde F(0)=0$ and 
\begin{equation}
  \label{eq:beta-critical-estimate}
\tilde F(t)\le  c_1 e^{\alpha_N |t|^{\frac{N}{N-1}}} \qquad \text{for $t \in \R$ with a constant $c_1>0$.}
\end{equation}
With
$$
v_n:= \tilde F(u_n) \qquad \text{for $n \in \N$,}
$$
we then have
\begin{equation}
  \label{eq:Phi-u-n-u-relation}
\Phi(u_n)= b_0(v_n) + 2 b_0 \bigl(1_{B_1}\kappa_0,v_n\bigr) + b_0(\kappa_0 1_{B_1})
= b_0(v_n) + 2 b_0 \bigl(1_{B_1} \kappa_0,v_n\bigr) + \Phi(0).
\end{equation}
By (\ref{newton-2-arguments}) we have 
$$
b_0 \bigl(1_{B_1} \kappa_0,v_n \bigr) = \omega_{N-1}^2 \kappa_0 \int_{0}^1 r^{N-1} v_n(r) h(r)dr \quad \text{with}\quad 
h(r)= \ln \frac{1}{r}  \int_{0}^r \rho^{N-1} d\rho + \int_{r}^1 \rho^{N-1} (\ln \frac{1}{\rho}) d\rho. 
$$
Moreover, for any $\delta \in (0,1)$ we have, by T-M inequality and (\ref{eq:beta-critical-estimate}), 
\begin{equation}
  \label{eq:g-L-infty-delta-est}
\Bigl| \int_{0}^\delta r^{N-1} v_n h(r)dr \Bigr| \le \frac{c_1}{\omega_{N-1}} \|h\|_{L^\infty(0,\delta)}\int_{B_1}e^{\alpha_N u_n^{\frac{N}{N-1}}} dx \le \frac{c_1 c(B_1)}{\omega_{N-1}} \|h\|_{L^\infty(0,\delta)}.
\end{equation}
By (\ref{eq:l-p-strong-zero-locally-uniform}) and since $\tilde F(0)=0$, we also have
\begin{equation}
  \label{eq:delta-uniform-convergence}
v_n \to 0 \qquad \text{uniformly in $[\delta, 1]$ for every $\delta \in (0,1)$.}
\end{equation}
Combining (\ref{eq:g-L-infty-delta-est}), (\ref{eq:delta-uniform-convergence}) and the fact that $h(r) \to 0$ as $r \to 0$, we see that  
$$
b_0 \bigl(1_{B_1} \kappa_0,v_n \bigr) \to 0 \qquad \text{as $n \to \infty$.}
$$
To prove that $\Phi(u_n) \to \Phi(0)$ as $n \to \infty$, it thus remains, by (\ref{eq:Phi-u-n-u-relation}), to show that 
\begin{equation}
  \label{limit}
b_0(v_n) \to 0 \qquad \text{as $n \to \infty$.}  
\end{equation}
To see (\ref{limit}), we note that for every $\delta \in (0,1)$ we have
$$
\frac{b_0(v_n)}{2 \omega_{N-1}^2} = \int_{0}^1 r^{N-1} v_n(r) \ln \frac{1}{r} \int_{0}^r \rho^{N-1}  v_n(\rho)d\rho dr = M_n^\delta + N_n^\delta,
$$
where, by T-M inequality, \eqref{eq:beta-critical-estimate} and (\ref{eq:delta-uniform-convergence})
\begin{align}
M_n^\delta &:= \int_{\delta}^1 r^{N-1}v_n(r) \ln \frac{1}{r} \int_{0}^r \rho^{N-1}  v_n(\rho)d\rho dr \\ & \nonumber \le c_1 \int_{\delta}^1 r^{N-1}v_n(r) \ln \frac{1}{r} \int_{0}^1 \rho^{N-1} e^{\alpha_N u_n^{\frac{N}{N-1}}(\rho)} d\rho dr \nonumber\\
&\le \frac{c_1 c(B_1)}{\omega_{N-1}} \int_{\delta}^1 r^{N-1}v_n(r) \ln \frac{1}{r} dr \to 0 \qquad \text{as $n \to \infty$.} \label{b-n-v-n-zero-est-1}
\end{align}
To estimate 
$$
N_n^\delta:= \int_{0}^\delta r^{N-1} v_n(r) \ln \frac{1}{r} \int_{0}^r \rho^{N-1}  v_n(\rho)d\rho dr 
$$
we fix $\eps \in (0,\frac{1}{\alpha_N})$ and define, for any $n \in \N$,
$$
A_n^+
= \{r \in (0,1]\::\: u_n(r) \ge  [\epsilon(-\ln r)]^{\frac{N-1}{N}} \},\qquad
A_n^-
:= \{r \in (0,1]\::\: u_n(r) < [\eps(-\ln r)]^{\frac{N-1}{N}} \}.
$$
Since $\tilde F$ is an increasing function and $g$ is bounded on $[0,\infty)$ as a consequence of the assumptions $C_g=0$, we then have 
\begin{align}
	\label{eq:A--estimatess}
	v_n(r) &\le \tilde F([\eps(-\ln r)]^{\frac{N-1}{N}})\\ & \nonumber \le F([\eps(-\ln r)]^{\frac{N-1}{N}}) = \frac{g([\eps(-\ln r)]^{\frac{N-1}{N}})e^{-\alpha_N \eps \ln r}}{(1+ [\eps(-\ln r)]^{\frac{N-1}{N}}})^{\frac{N}{2(N-1)}} \le M r^{-\alpha_N \eps}\qquad \text{for $r \in A_n^-$,}
\end{align}
with some constant $M>0$ and
\begin{equation}
	\label{eq:A-+estimatess}
	v_n(r) \le  \frac{g(u_n(r)) e^{\alpha_N u_n^{\frac{N}{N-1}}(r)}
	}{(1+ [\eps(-\ln r)]^{\frac{N-1}{N}}          )^{\frac{N}{2(N-1)}}} \le  \frac{C_\eps(r) e^{\alpha_N u_n^{\frac{N}{N-1}}(r)}
	}{(1+ [\eps(-\ln r)]^{\frac{N-1}{N}})^{\frac{N}{2(N-1)}}}  \qquad \text{for $r \in A_n^+$}
\end{equation}
with the increasing function 
$$
r \mapsto C_\eps(r):= \sup \{g(t)\::\: t \ge [\epsilon(-\ln r)]^{\frac{N-1}{N}}\}.
$$
In particular, we thus have
\begin{equation}
	\label{eq:A-+estimatess-special}
	v_n(r) \le C_\eps(1)e^{\alpha_N u_n^{\frac{N}{N-1}}(r)}
 \qquad \text{for $r \in A_n^+$.}
\end{equation}
We now write 
\begin{align*}
N_n^\delta= \int_{A_n^- \cap (0,\delta)} r^{N-1} v_n(r) \ln \frac{1}{r}
\int_{0}^r \rho^{N-1}  v_n(\rho)d\rho dr + \int_{A_n^+ \cap (0,\delta)} r^{N-1} v_n(r) \ln \frac{1}{r}\int_{0}^r \rho^{N-1}  v_n(\rho)d\rho dr,
\end{align*}
where, by (\ref{eq:A--estimatess}) and since $\eps \in (0,\frac{1}{\alpha_N})$,
\begin{align}
&\int_{A_n^- \cap (0,\delta)} r^{N-1} v_n(r) \ln \frac{1}{r}
\int_{0}^r \rho^{N-1}  v_n(\rho)d\rho dr  \le M^2 \int_{0}^\delta r^{N-1-\alpha_N \eps} \ln \frac{1}{r} \int_{0}^1 \rho^{N-1-\alpha_N \eps} d\rho dr \nonumber\\
&\le  M^2 \int_{0}^\delta r^{N-1-\alpha_N \eps} \ln \frac{1}{r}dr = M^2 \Bigl(\frac{\delta^{N-\alpha_N \eps}}{N-\alpha_N \eps}-\frac{\delta^{N-\alpha_N \eps}\ln \delta }{N-\alpha_N \eps}\Bigr) 
\label{N-delta-1-est}
\end{align}
for all $n \in \N$. Moreover, we have 
\begin{align*}
\int_{A_n^+ \cap (0,\delta)} r^{N-1} v_n(r) \ln \frac{1}{r}\int_{0}^r \rho^{N-1}  v_n(\rho)d\rho dr& = \int_{A_n^+ \cap (0,\delta)} r^{N-1} v_n(r) \ln \frac{1}{r}\int_{A_n^+ \cap (0,r)} \rho^{N-1}  v_n(\rho)d\rho dr \\ & + \int_{A_n^+ \cap (0,\delta)} r^{N-1} v_n(r) \ln \frac{1}{r}\int_{A_n^- \cap (0,r)} \rho^{N-1}  v_n(\rho)d\rho dr,
\end{align*}
where, by (\ref{eq:A-+estimatess}) 
\begin{align}
&\int_{A_n^+ \cap (0,\delta]} r^{N-1}v_n(r) \ln \frac{1}{r} \int_{A_n^+ \cap [0,r]} \rho^{N-1}  v_n(\rho)d\rho dr \nonumber\\
&\le \int_{A_n^+ \cap (0,\delta]}
r^{N-1} \frac{C_\eps(r) e^{\alpha_N u_n^{\frac{N}{N-1}}(r)}}{(1+ [\eps(-\ln r)]^{\frac{N-1}{N}})^{\frac{N}{2(N-1)}}}
\ln \frac{1}{r} 
\int_{A_n^+ \cap [0,r]} \rho 
 \frac{C_\eps(\rho) e^{\alpha_N u_n^{\frac{N}{N-1}}(\rho)}
}{(1+ [\eps(-\ln \rho)]^{\frac{N-1}{N}})^{\frac{N}{2(N-1)}}} d\rho dr \nonumber\\
&\le C_\eps(\delta)^2  \int_{A_n^+ \cap (0,\delta]}
\frac{(-\ln r) r e^{\alpha_N u_n^{\frac{N}{N-1}}(r)}}{(1+ [\eps(-\ln r)]^{\frac{N-1}{N}})^{\frac{N}{N-1}}} \int_0^r  
\rho^{N-1} e^{\alpha_N u_n^\frac{N}{N-1}(\rho)}d\rho dr \nonumber\\
&\le \frac{C_\eps(\delta)^2}{\eps} 
\Bigl(\int_{0}^1  
r e^{\alpha_N u_n^\frac{N}{N-1}(r)} dr\Bigr)^2 \le \frac{\bigl(c(B_1)\bigr)^2}{\omega_{N-1}^2 \eps}C_\eps(\delta)^2 \label{N-delta-2-est}
\end{align}
by the classical T-M inequality. Furthermore, by (\ref{eq:A--estimatess}) and (\ref{eq:A-+estimatess-special}),
\begin{align}
& \nonumber \int_{A_n^+ \cap [0,\delta]}
r^{N-1} v_n(r) \ln \frac{1}{r} \int_{A_n^- \cap [0,r]} \rho^{N-1}  v_n(\rho)d\rho dr \\ &\leq 
                C_\eps(1)M \int_{A_n^+ \cap [0,\delta]} {r^{N-1} e^{\alpha_N u_n^{\frac{N}{N-1}}(r)}}
\ln \frac{1}{r}  \int_{0}^r 
\rho^{N-1- \alpha_N \eps} d\rho dr \nonumber\\
  & \nonumber \le \frac{C_\eps(1)M}{N- \alpha_N \eps} \int_{0}^\delta r^{2N-1- \alpha_N \eps} e^{\alpha_N u_n^{\frac{N}{N-1}}(r)} \ln \frac{1}{r} \,dr\\ & \leq
C_\eps(1)M  \sup_{s \in [0,\delta]}\Bigl(s^{N- \alpha_N \eps}\ln \frac{1}{s}\Bigr)
    \int_{0}^1 r^{N-1} e^{\alpha_N u_n^2(r)}\,dr \nonumber\\
  & \le \frac{C_\eps(1)M c(B_1) }{2\pi}\sup_{s \in [0,\delta]}\Bigl(s^{N- \alpha_N \eps}\ln \frac{1}{s}\Bigr)  
\label{N-delta-3-est}
\end{align}
again by the T-M inequality. We observe now that $C_\delta \to 0$ as $\delta \to 0$ by assumption (ii). So, as $\eps \in (0,\frac{1}{\alpha_N})$  the RHS of (\ref{N-delta-1-est}), (\ref{N-delta-2-est}) and (\ref{N-delta-3-est}) tend to zero as $\delta \to 0^+$. Hence we infer that
$$
\lim_{\delta \to 0}\sup_{n \in \N}N_n^\delta =0.
$$
Combining this with (\ref{b-n-v-n-zero-est-1}), we infer (\ref{limit}), as claimed.

\end{proof}

The following Proposition completes the proof of Theorem~\ref{sec:introduction-main-thm-last}.

\begin{proposition}
  \label{C-existence-of-maximizer}
  Suppose that $g$ satisfies $(g_0)$ and $(g_1)$ with $C_g= 0$. Then the value 
$m_1(F)< \infty$ is attained by a function $u \in \cB_1^*$. 
\end{proposition}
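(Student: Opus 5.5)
We argue by the direct method, combining the reduction to Schwarz symmetric functions with the strong continuity result of Proposition~\ref{abstract-strong-continuity}. First, $m_1(F)<\infty$ by Proposition~\ref{C-finiteness}, since $C_g=0<\infty$. Next, Corollary~\ref{reduction-to-the-radial-case} gives $m_1(F)=\sup_{\cB_1^*}\Phi$, so we may pick a maximizing sequence $(u_n)_n\subset\cB_1^*$ with $\Phi(u_n)\to m_1(F)$. The set $\cB_1$ is bounded in the reflexive space $W^{1,N}_0(B_1)$. Hence, after passing to a subsequence, $u_n\weak u$ weakly in $W^{1,N}_0(B_1)$. As observed at the start of the proof of Proposition~\ref{abstract-strong-continuity}, the limit satisfies $u\in\cB_1^*$. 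In particular $u$ is radial, nonnegative and radially nonincreasing, and $|\nabla u|_N\le 1$ by weak lower semicontinuity of the norm.

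Since $C_g=0$, hypothesis (ii) of Proposition~\ref{abstract-strong-continuity} holds regardless of whether $u$ vanishes. That proposition therefore gives $\Phi(u_n)\to\Phi(u)$, and so
\[
\Phi(u)=\lim_{n\to\infty}\Phi(u_n)=m_1(F).
\]
This shows that the value $m_1(F)$ is attained by $u\in\cB_1^*$, which is the claim.

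The main obstacle is excluding loss of compactness through concentration, i.e. the case $u=0$ with $(u_n)$ an SCS-sequence. This is exactly what Proposition~\ref{abstract-strong-continuity}(ii) handles, using the decay $g(t)\to0$ to show that the concentrating part of $b_0(\tilde F(u_n))$ vanishes. So no further work is needed here; we only need to check that its hypotheses are met. They are: $(g_0)$ and $(g_1)$ hold, and the sequence lies in $\cB_1^*$.

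For the symmetry statement in Theorem~\ref{sec:introduction-main-thm-last}(iii), namely that every maximizer is up to sign radial and radially decreasing, we would additionally use the equality cases in the rearrangement inequalities~\eqref{eq:Riesz-rearrangement} and in the P\'olya--Szeg\H{o} inequality. That argument goes beyond the present proposition, which only asserts existence.
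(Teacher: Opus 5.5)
Your proposal is correct and follows the paper's proof essentially verbatim: take a maximizing sequence in $\cB_1^*$, extract a weakly convergent subsequence whose limit lies in $\cB_1^*$, and apply Proposition~\ref{abstract-strong-continuity}(ii) to pass to the limit in $\Phi$. Citing Corollary~\ref{reduction-to-the-radial-case} is in fact a more accurate justification for replacing $u_n$ by $u_n^*$ than P\'olya--Szeg\H{o} alone, which is how the paper phrases it, because it is the rearrangement inequalities \eqref{eq:Riesz-rearrangement} that guarantee $\Phi(u_n^*)\ge\Phi(u_n)$.
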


\begin{proof}
  Let $(u_n)_n$ be a maximizing sequence in $\cB_1$ for $m_F$. By Polya-Szego inequality, we may assume that $u_n \in \cB_1^*$ for $n \in \N$. Since $\cB_1$ is bounded in $W^{1,N}_0(B_1)$, we may also assume that $u_n \weak u \in W^{1,N}_0(B_1)$ with $u \in \cB_1^*$. By Proposition~\ref{abstract-strong-continuity}, we then have
  $$
  m_1(F)= \lim_{n \to \infty}\Phi(u_n)= \Phi(u),
  $$
  so $m_1(F)$ is attained at $u \in \cB_1^*$.
\end{proof}

\section{An upper bound for Schwarz symmetric concentrating sequences}
\label{sec:an-upper-bound}

We devote this section to the proof of the first assertion of Theorem~\ref{sec:introduction-CC-theorem}, concerning the asymptotic upper bound for Schwarz symmetric concentrating sequences (\ref{limsup-SCS}). Exploiting this bound together with the continuity criterion from Proposition~\ref{abstract-strong-continuity}, we will be able to finalize the proof of Theorem~\ref{sec:introduction-main-thm-critical-sufficient-cond}. A key tool in this analysis is a particular change of variables, adapted from \cite{carlesonchang}, defined as follows.

\begin{lemma}
  \label{transformation}
  Let $u \in W_0^{1,N}(B_1)$ be a radial nonnegative and radially decreasing function, and let
    $w: [0,\infty) \to \R$ be defined by $w(t) = N^\frac{N-1}{N} \omega^\frac{1}{N}_{N-1} u(e^{-t/N})$ (where we identify $u$ with its profile function in the radial variable). Then $w \in W_{loc}^{1,N}(\R^+)$ is an increasing function with $w(0)=0$. Moreover, we have
  \begin{equation}
    \label{eq:transformation-Dirichlet-integral}
    \int_0^\infty |w^\prime (t)|^N dt = \omega_{N-1} \int_0^1 |u^\prime (r)|^N r^{N-1} dr
    = \int_{B_1} | \nabla u |^N dx,
  \end{equation}
and
  \begin{equation}\footnotesize 
    \label{eq:functional}
\Phi (u) = \frac{2 \omega_{N-1}^2}{N^3} \int_0^\infty \frac{y e^{w^{N/(N-1)}(y) - y} g( N^\frac{1-N}{N} \omega_{N-1}^{-\frac{1}{N}} w(y) )  }{ ( 1+N^\frac{1-N}{N} \omega_{N-1}^{-\frac{1}{N}} w(y) )^\frac{N}{2(N-1)} } \int_{y}^\infty \frac{ e^{ w^{N/(N-1)} (x) - x} g( N^\frac{1-N}{N} \omega_{N-1}^{-\frac{1}{N}} w(x) ) }{( 1+N^\frac{1-N}{N} \omega_{N-1}^{-\frac{1}{N}} w(x) )^\frac{N}{2(N-1)}} dx dy.
\end{equation}
\end{lemma}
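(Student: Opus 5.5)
The plan is to prove the three assertions by direct substitution $r=e^{-t/N}$, i.e.\ $t=-N\ln r$, and to read off the functional identity from the radial Newton formula \eqref{newton-1-argument}. Throughout, set $c_N:=N^{\frac{N-1}{N}}\omega_{N-1}^{\frac1N}$, so that $w(t)=c_N\,u(e^{-t/N})$ and $u(r)=c_N^{-1}w(-N\ln r)$.

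\textbf{Regularity and monotonicity of $w$.} Since $u$ is radial and lies in $W^{1,N}_0(B_1)$, its profile is locally absolutely continuous on $(0,1]$, with $u(1)=0$ by the boundary condition. For any $T>0$, the interval $t\in[0,T]$ corresponds to $r\in[e^{-T/N},1]$, which stays away from the origin. On such an interval the map $t\mapsto e^{-t/N}$ is smooth with derivative bounded above and below. Hence $w\in W^{1,N}(0,T)$ for every $T$, so $w\in W^{1,N}_{loc}(\R^+)$. The value $w(0)=c_N u(1)=0$ follows immediately. Moreover $w$ is increasing, being the composition of the nonincreasing profile $u$ with the decreasing map $t\mapsto e^{-t/N}$.

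\textbf{Dirichlet integral \eqref{eq:transformation-Dirichlet-integral}.} By the chain rule,
\[
w'(t)=-\tfrac{c_N}{N}\,r\,u'(r),\qquad r=e^{-t/N}.
\]
Using $c_N^N=N^{N-1}\omega_{N-1}$, this gives
\[
|w'(t)|^N=\tfrac{\omega_{N-1}}{N}\,r^N|u'(r)|^N .
\]
Since $dt=-N\,dr/r$, substitution yields
\[
\int_0^\infty|w'|^N\,dt=\omega_{N-1}\int_0^1|u'(r)|^N r^{N-1}\,dr .
\]
The last equality in \eqref{eq:transformation-Dirichlet-integral} is just polar coordinates, $\int_{B_1}|\nabla u|^N\,dx=\omega_{N-1}\int_0^1|u'|^Nr^{N-1}\,dr$.

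\textbf{Functional identity \eqref{eq:functional}.} Put $v:=1_{B_1}F(u)$, which is radial and nonnegative. As recalled in Section~\ref{sec:preliminaries}, $b_\pm(v)<\infty$ for $u\in W^{1,N}_0(B_1)$. Therefore \eqref{newton-1-argument} applies, and since $v$ vanishes outside $B_1$,
\[
\Phi(u)=2\omega_{N-1}^2\int_0^1 r^{N-1}v(r)\ln\tfrac1r\int_0^r\rho^{N-1}v(\rho)\,d\rho\,dr .
\]
All integrands are nonnegative, so Tonelli justifies changing variables in both integrals. We substitute $r=e^{-y/N}$ and $\rho=e^{-x/N}$, using the following correspondences:
\begin{itemize}
\item $\ln\frac1r=\frac yN$;
\item $r^{N-1}\,dr$ becomes $\frac1N e^{-y}\,dy$, and likewise $\rho^{N-1}\,d\rho$ becomes $\frac1N e^{-x}\,dx$;
\item the range $\rho\in(0,r)$ becomes $x\in(y,\infty)$, and the orientation reversals cancel.
\end{itemize}
Collecting the three factors $\frac1N$ produces the constant $\frac{2\omega_{N-1}^2}{N^3}$.

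For the integrand we use $u=N^{\frac{1-N}{N}}\omega_{N-1}^{-\frac1N}w$. The exponent then matches because
\[
c_N^{\frac N{N-1}}=N\,\omega_{N-1}^{\frac1{N-1}}=\alpha_N,
\qquad\text{so}\qquad
\alpha_N u^{\frac N{N-1}}=w^{\frac N{N-1}} .
\]
Thus $F(u(e^{-y/N}))$ equals the factor $\dfrac{e^{w^{N/(N-1)}(y)}\,g(\cdot)}{(1+\cdot)^{\frac N{2(N-1)}}}$ appearing in \eqref{eq:functional}. Combined with $e^{-y}$ and $e^{-x}$ from the measures, this gives exactly the right-hand side of \eqref{eq:functional}.

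\textbf{Main point of care.} The computation itself is routine. The only step needing attention is the legitimacy of applying \eqref{newton-1-argument}, which requires $b_\pm(v)<\infty$; this is supplied by the finiteness result from \cite{CC} quoted in the introduction. Nonnegativity is then used to justify the substitution in the iterated improper integrals near $r=0$, i.e.\ as $y\to\infty$.
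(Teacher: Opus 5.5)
Your argument is the paper's own proof written out in more detail; the paper, too, simply substitutes $r=e^{-t/N}$ into \eqref{newton-1-argument}. Your bookkeeping is right: $|w'|^N=\frac{\omega_{N-1}}{N}r^N|u'|^N$, the factor $N^{-3}$, and $c_N^{N/(N-1)}=\alpha_N$. The regularity, monotonicity, $w(0)=0$ and \eqref{eq:transformation-Dirichlet-integral} are fine for every $N\ge2$.

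The gap, which the paper shares, is in the step you call the only delicate one, and it is not an integrability issue. Formula \eqref{newton-1-argument} is Newton's theorem. It asserts that the average of $\ln\frac{1}{|x-y|}$ over $|x|=r$, $|y|=\rho$ equals $\ln\frac{1}{\max\{r,\rho\}}$. That relies on $z\mapsto\ln\frac{1}{|z|}$ being harmonic in $\R^N\setminus\{0\}$, which holds only for $N=2$; the formula was imported from the planar case. Knowing $b_\pm(v)<\infty$ does not repair this.

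For $N\ge3$ we have $\Delta\ln|z|=(N-2)|z|^{-2}>0$. Hence $x\mapsto\frac{1}{|S_\rho|}\int_{|y|=\rho}\ln|x-y|\,dS(y)$ is radial and strictly subharmonic, so it is strictly increasing in $|x|$. By symmetry in $(r,\rho)$ and scaling, the double spherical average of $\ln|x-y|$ equals $\ln\max\{r,\rho\}+D_N(\min\{r,\rho\}/\max\{r,\rho\})$, with $0<D_N(t)\le\ln2$ for $t\in(0,1]$. For example, take $N=3$ and $r=\rho=1$. Then $|x-y|^2=2-2t$ with $t$ uniform on $[-1,1]$, and the average is $\frac14\int_{-1}^1\ln(2-2t)\,dt=\ln2-\frac12\neq0$.

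Redoing your substitution with the correct kernel, and writing $\Psi(t):=e^{-t}F(u(e^{-t/N}))$, gives
\[
\Phi(u)=\frac{2\omega_{N-1}^2}{N^3}\int_0^\infty\int_y^\infty\bigl(y-N\,D_N(e^{-(x-y)/N})\bigr)\Psi(y)\Psi(x)\,dx\,dy .
\]
So for $N\ge3$, \eqref{eq:functional} holds only as the strict inequality ``$<$'' whenever $F(u)\not\equiv0$; this already happens for $u\equiv0$ with $g(0)>0$. It is an identity only when $N=2$.

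This inequality still suffices for the upper bound in Section~\ref{sec:an-upper-bound}. It does not suffice for the lower bounds in Section~\ref{sec:sharpn-upper-limit}. There the bounded correction $D_N$ survives in the non-concentrating part and lowers the constant $\frac{g^2(0)}{4}$ when $g(0)>0$.
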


\begin{proof}
By definition, $w$ is an increasing function satisfying $w(0)=0$, and (\ref{eq:transformation-Dirichlet-integral}) follows by a straightforward computation. Moreover, since 
  $$
\Phi(u) = 2 \omega_{N-1}^2 \int_0^1 r^{N-1} F(u(r)) \ln \frac{1}{r} \int_0^r \rho^{N-1} F(u(\rho)) d\rho dr
$$
by (\ref{newton-1-argument}), we have, by the change of variables $r= e^{-t/N}$, 
$$
\begin{aligned} \footnotesize
\Phi (u) &= \frac{2 \omega_{N-1}^2}{N^3} \int_0^\infty t e^{-t} F (N^\frac{1-N}{N} \omega_{N-1}^{-\frac{1}{N}} w(t)) \int_0^\infty e^{-(s+t)} F(N^\frac{1-N}{N} \omega_{N-1}^{-\frac{1}{N}} w(s+t)) ds dt \\
&\footnotesize = \frac{2 \omega_{N-1}^2}{N^3} \int_0^\infty \frac{t e^{w^{N/(N-1)}(t) - t} g( N^\frac{1-N}{N} \omega_{N-1}^{-\frac{1}{N}} w(t) ) }{ ( 1+N^\frac{1-N}{N} \omega_{N-1}^{-\frac{1}{N}} w(t) )^\frac{N}{2(N-1)} }  \\ & \times \int_0^\infty \frac{ e^{w^{N/(N-1)} (s+t) - (s+t)} g( ( N^\frac{1-N}{N} \omega_{N-1}^{-\frac{1}{N}} w(s+t) ) }{( 1+N^\frac{1-N}{N} \omega_{N-1}^{-\frac{1}{N}} w(s+t) )^\frac{N}{2(N-1)}} ds dt\\
&= \frac{2 \omega_{N-1}^2}{N^3} \int_0^\infty \frac{y e^{w^{N/(N-1)}(y) - y} g( N^\frac{1-N}{N} \omega_{N-1}^{-\frac{1}{N}} w(y) )  }{ ( 1+N^\frac{1-N}{N} \omega_{N-1}^{-\frac{1}{N}} w(y) )^\frac{N}{2(N-1)} } \\ & \times \int_{y}^\infty \frac{ e^{ w^{N/(N-1)} (x) - x} g( N^\frac{1-N}{N} \omega_{N-1}^{-\frac{1}{N}} w(x) ) }{( 1+N^\frac{1-N}{N} \omega_{N-1}^{-\frac{1}{N}} w(x) )^\frac{N}{2(N-1)}} dx dy.
\end{aligned}
$$
\end{proof}

We may now complete the
\begin{proof}[Proof of Theorem~\ref{sec:introduction-CC-theorem}(i)] 
  Let $(u_n)_n \subset \mathcal{B}_1 $ be any SCS-sequence, and let $w_n:[0,\infty) \to \R$ be defined by $ w_n(t) = N^\frac{N-1}{N} \omega^\frac{1}{N}_{N-1} u_n(e^{-t/N}) $ for $n \in \N$.   By Lemma~\ref{transformation}, we then have $w_n \in W^{1,N}_{loc}(\R_+)$ with $w_n(0) = 0 $,
  \begin{equation}
    \label{eq:local-zero-deriv-conv-0}
\int_0^\infty (w^\prime_n)^N dt \le 1  \qquad \text{for all $n \in \N$}
\end{equation}
and
  \begin{equation}
    \label{eq:local-zero-deriv-conv}
\int_0^A w^\prime_n dt \to 0 \qquad \text{as $n \to \infty$ for every $A>0$.}
\end{equation}
Moreover, $w_n$ is nondecreasing for every $n$, and by (\ref{eq:functional}) it suffices to show that
\begin{equation}\label{upper_bound}
  \begin{split}
  \limsup_{n \to \infty} \int_0^{+\infty} \int_y^{+\infty} \frac{y e^{w_n^{N/(N-1)}(y) - y} g( N^\frac{1-N}{N} \omega_{N-1}^{-\frac{1}{N}} w_n(y) )  }{ ( 1+N^\frac{1-N}{N} \omega_{N-1}^{-\frac{1}{N}} w_n(y) )^\frac{N}{2(N-1)} } \frac{ e^{ w_n^{N/(N-1)} (x) - x} g( N^\frac{1-N}{N} \omega_{N-1}^{-\frac{1}{N}} w_n(x) ) }{( 1+N^\frac{1-N}{N} \omega_{N-1}^{-\frac{1}{N}} w_n(x) )^\frac{N}{2(N-1)}} dx dy\\
  \leq \frac{{g^2(0)}}{4} + \frac{N}{2}  \omega_{N-1}^\frac{1}{N-1} C_g^2 e^{2(1+\frac{1}{2}+\cdots+ \frac{1}{N-1})  } .
  \end{split}
\end{equation}

To see this, we first note that \eqref{eq:local-zero-deriv-conv-0} yields  
  \begin{equation}
    \label{eq:local-uniform-bound}
 w_n^\frac{N}{N-1}(t) = \Bigl( \int_0^t w_n'(s)\,ds \Bigr)^\frac{N}{N-1} \le t \left( \int_0^\infty (w^\prime_n)^N \right)^{1/(N-1)} \le t \qquad \text{for every $t \ge 0$,}     
\end{equation}
while \eqref{eq:local-zero-deriv-conv} gives
  \begin{equation}
    \label{eq:local-uniform-conv}
    w_n \to 0 \qquad \text{locally uniformly on $[0,\infty)$.}     
\end{equation}
We now define, for $n \in \N$,
$$
a_n:= \inf\{ t \in [3,\infty)\::\: w_n^\frac{N}{N-1}(t) \ge t - 3 \log t \} \quad \text{in $[3,\infty]$.}
$$
So if $a_n = \infty$, then $w_n^\frac{N}{N-1}(t) \le t - 3 \log t$ for all $t \ge 3$, while $a_n$ is the first point $a_n \in [1, +\infty)$ with $ w_n^\frac{N}{N-1} (a_n) = a_n - 3 \log a_n $ if $a_n< \infty$. We also note that
\begin{equation}
  \label{eq:a-n-infty}
a_n \to \infty \qquad \text{as $n \to \infty$}  
\end{equation}
by (\ref{eq:local-uniform-conv}). Moreover, we have the estimate
\begin{equation}
  \label{eq:cc-estimate}
\limsup_{n \to \infty} \int_{a_n}^\infty e^{ w_n^{N/(N-1)} (x) - x}\,dx \le e^{1+\frac{1}{2}+\cdots+\frac{1}{N-1}}.
\end{equation}
Indeed, this is true by \cite[P. 140]{figuereido-do-o-ruf} if $a_n$ is replaced by $\tilde a_n = \inf\{ t \in [1,\infty)\::\: w_n^\frac{N}{N-1}(t) \ge t - 2 \log t \}$.
Note that $a_n \le \tilde a_n$, and, moreover
$$
\limsup_{n \to \infty} \int_{a_n}^{\tilde a_n}  e^{ w_n^{N/(N-1)} (x) - x}\,dx \le
\limsup_{n \to \infty} \int_{a_n}^{\tilde a_n} \frac{1}{x^2} \,dx = \frac{1}{a_n}-\frac{1}{\tilde a_n} \to 0\quad \text{as $n \to \infty$,}
$$
where we have used the convention $\frac{1}{\infty}=0$. Hence (\ref{eq:cc-estimate}) holds.

Next we show that
\begin{equation} \label{limit-g-0}
\limsup_{n \to \infty} \int_0^{a_n} \frac{y e^{w_n^{N/(N-1)}(y) - y} g( N^\frac{1-N}{N} \omega_{N-1}^{-\frac{1}{N}} w_n(y) )  }{ ( 1+N^\frac{1-N}{N} \omega_{N-1}^{-\frac{1}{N}} w_n(y) )^\frac{N}{2(N-1)} } dy \le g(0).
\end{equation}
On the one hand, we have, for fixed $ A > 0 $,
$$
\begin{aligned}
&\int_0^{a_n}  \frac{y e^{w_n^{N/(N-1)}(y) - y} g( N^\frac{1-N}{N} \omega_{N-1}^{-\frac{1}{N}} w_n(y) )  }{ ( 1+N^\frac{1-N}{N} \omega_{N-1}^{-\frac{1}{N}} w_n(y) )^\frac{N}{2(N-1)} } dy \\&= \int_0^{A}  \frac{y e^{w_n^{N/(N-1)}(y) - y} g( N^\frac{1-N}{N} \omega_{N-1}^{-\frac{1}{N}} w_n(y) )  }{ ( 1+N^\frac{1-N}{N} \omega_{N-1}^{-\frac{1}{N}} w_n(y) )^\frac{N}{2(N-1)} } dy+ \int_A^{a_n}  \frac{y e^{w_n^{N/(N-1)}(y) - y} g( N^\frac{1-N}{N} \omega_{N-1}^{-\frac{1}{N}} w_n(y) )  }{ ( 1+N^\frac{1-N}{N} \omega_{N-1}^{-\frac{1}{N}} w_n(y) )^\frac{N}{2(N-1)} } dy
\end{aligned}
$$
where, since $w_n \to  0$ uniformly on $[0,A]$,
\begin{align*}
 &\int_0^{A}  \frac{y e^{w_n^{N/(N-1)}(y) - y} g( N^\frac{1-N}{N} \omega_{N-1}^{-\frac{1}{N}} w_n(y) )  }{ ( 1+N^\frac{1-N}{N} \omega_{N-1}^{-\frac{1}{N}} w_n(y) )^\frac{N}{2(N-1)} } dy \\ &\to g(0) \int_0^A y e^{-y}\,dy = g(0) ( 1 - (1+A)e^{-A} ) \,\,\, \textit{as $n \to \infty$}   
\end{align*}
and
\begin{align*}
&\int_A^{a_n}  \frac{y e^{w_n^{N/(N-1)}(y) - y} g( N^\frac{1-N}{N} \omega_{N-1}^{-\frac{1}{N}} w_n(y) )  }{ ( 1+N^\frac{1-N}{N} \omega_{N-1}^{-\frac{1}{N}} w_n(y) )^\frac{N}{2(N-1)} } dy \\ &\le \|g\|_{L^\infty} \int_{A}^{a_n}y e^{-3 \log y}\,dy= \|g\|_{L^\infty} \int_{A}^{a_n}\frac{1}{y^2} \,dy \\
&\;\to\; \|g\|_{L^\infty} \int_{A}^{\infty}\frac{1}{y^2} \,dy = \frac{\|g\|_{L^\infty}}{A}\qquad \text{as $n \to \infty$.} 
\end{align*}
Consequently,
$$
\limsup_{n \to \infty} \int_0^{a_n}  \frac{y e^{w_n^{N/(N-1)}(y) - y} g( N^\frac{1-N}{N} \omega_{N-1}^{-\frac{1}{N}} w_n(y) )  }{ ( 1+N^\frac{1-N}{N} \omega_{N-1}^{-\frac{1}{N}} w_n(y) )^\frac{N}{2(N-1)} } dy
\le  g(0) ( 1 - (1+A)e^{-A} ) + \frac{\|g\|_{L^\infty}}{A},
$$
for all $A>0$, which gives \eqref{limit-g-0}.

Next, we split the integral in \eqref{upper_bound} into three parts, i.e.,
\begin{align}
&\int_0^{+\infty} \int_y^{+\infty}  \frac{y e^{w_n^{N/(N-1)}(y) - y} g( N^\frac{1-N}{N} \omega_{N-1}^{-\frac{1}{N}} w_n(y) )  }{ ( 1+N^\frac{1-N}{N} \omega_{N-1}^{-\frac{1}{N}} w_n(y) )^\frac{N}{2(N-1)} } \frac{ e^{ w_n^{N/(N-1)} (x) - x} g( N^\frac{1-N}{N} \omega_{N-1}^{-\frac{1}{N}} w_n(x) ) }{( 1+N^\frac{1-N}{N} \omega_{N-1}^{-\frac{1}{N}} w_n(x) )^\frac{N}{2(N-1)}} dx dy  \nonumber\\
&= \int_{0}^{a_n} \int_{a_n}^{+\infty}\dots dx dy + \int_{0}^{a_n} \int_{y}^{a_n} \dots dx dy \nonumber+ \int_{a_n}^{+\infty} \int_{y}^{+\infty} \dots dx dy \nonumber\\
&=: I_n + J_n + K_n \label{int-splitting}.
\end{align}
By (\ref{eq:cc-estimate}) and \eqref{limit-g-0} we have,
\begin{align}
I_n &= \left( \int_{0}^{a_n} \frac{y e^{w_n^{N/(N-1)}(y) - y} g( N^\frac{1-N}{N} \omega_{N-1}^{-\frac{1}{N}} w_n(y) )  }{ ( 1+N^\frac{1-N}{N} \omega_{N-1}^{-\frac{1}{N}} w_n(y) )^\frac{N}{2(N-1)} } dy \right)\nonumber\\ & \times \left( \int_{a_n}^{+\infty} \frac{ e^{ w_n^{N/(N-1)} (x) - x} g( N^\frac{1-N}{N} \omega_{N-1}^{-\frac{1}{N}} w_n(x) ) }{( 1+N^\frac{1-N}{N} \omega_{N-1}^{-\frac{1}{N}} w_n(x) )^\frac{N}{2(N-1)}} dx \right) \nonumber\\
&\leq \frac{\|g\|_{L^\infty}}{( 1+N^\frac{1-N}{N} \omega_{N-1}^{-\frac{1}{N}} w_n(a_n) )^\frac{N}{2(N-1)}} \left( \int_{0}^{a_n} \frac{y e^{w_n^{N/(N-1)}(y) - y} g( N^\frac{1-N}{N} \omega_{N-1}^{-\frac{1}{N}} w_n(y) )  }{ ( 1+N^\frac{1-N}{N} \omega_{N-1}^{-\frac{1}{N}} w_n(y) )^\frac{N}{2(N-1)} } dy \right)\nonumber \\ &  \times \left( \int_{a_n}^{+\infty} e^{w_n^{N/(N-1)}(x) - x}dx \right)  \to 0 \quad \text{as}~ n \to +\infty. \label{I-n-upper-bound}
\end{align}

To estimate $J_n$, we follow the idea of proving \eqref{limit-g-0}. We have 
\begin{align*}
  J_n &\leq \int_0^{a_n} \int_y^{a_n}  y e^{w_n^{N/(N-1)}(y) - y} g( N^\frac{1-N}{N} \omega_{N-1}^{-\frac{1}{N}} w_n(y) )   e^{ w_n^{N/(N-1)} (x) - x} g( N^\frac{1-N}{N} \omega_{N-1}^{-\frac{1}{N}} w_n(x) )   dx dy  \\
  &\leq \int_0^{A} \int_y^{A}\dots dx dy \:+\:  \int_0^{A} \int_A^{a_n} \dots  dx dy \:+\:  \int_{A}^{a_n} \int_y^{a_n}\dots  dx dy, 
\end{align*}
where, as $n \to \infty$, by (\ref{eq:local-uniform-conv}), 
\begin{align}
&  \int_0^{A} y e^{w_n^{N/(N-1)}(y) - y} g( N^\frac{1-N}{N} \omega_{N-1}^{-\frac{1}{N}} w_n(y) )  \int_y^{A} e^{w_n^{N/(N-1)}(x) - x} g( N^\frac{1-N}{N} \omega_{N-1}^{-\frac{1}{N}} w_n(x) ) dx dy  \nonumber\\
  &\to\quad {g^2(0)} \int_0^{A}y e^{- y}  \int_y^{A} e^{- x}  dx dy \le  {g^2(0)} \int_0^{A}y e^{- y}  \int_y^{\infty} e^{- x}  dx dy=
    {g^2(0)} \int_0^{A}y e^{- 2y} dy \nonumber \\
    &\qquad \;= \frac{{g^2(0)}}{4}\int_0^{2A}y e^{- y} dy
  = \frac{{g^2(0)}}{4}\Bigl(1 - (1+2A)e^{-2A}\Bigr), \label{J-n-est-1}\\
&\int_0^{A}y e^{w_n^{N/(N-1)}(y) - y} g( N^\frac{1-N}{N} \omega_{N-1}^{-\frac{1}{N}} w_n(y) )   \int_A^{a_n} e^{w_n^{N/(N-1)}(x) - x} g( N^\frac{1-N}{N} \omega_{N-1}^{-\frac{1}{N}} w_n(x) )  dx dy  \nonumber\\
&\le \|g\|_{L^\infty}^2 \int_0^{A}y e^{w_n^{N/(N-1)}(y) - y}  \int_A^{\infty} \frac{1}{x^3} dx dy = \frac{\|g\|_{L^\infty}^2}{2A^2} \int_0^{A}y e^{w_n^{N/(N-1)}(y) - y} dy \nonumber \\
  &\to \quad \frac{\|g\|_{L^\infty}^2}{2A^2} \int_0^{A}y e^{- y} dy =\frac{\|g\|_{L^\infty}^2}{2A^2} \Bigl(1 - (1+A)e^{-A}\Bigr),\label{J-n-est-2}
\end{align}
and
\begin{align}
&\int_{A}^{a_n}y e^{w_n^{N/(N-1)}(y) - y}  g( N^\frac{1-N}{N} \omega_{N-1}^{-\frac{1}{N}} w_n(y) )  \int_y^{a_n} e^{w_n^{N/(N-1)}(x) - x}  g( N^\frac{1-N}{N} \omega_{N-1}^{-\frac{1}{N}} w_n(x) )  dx dy \nonumber \\
& \le \|g\|_{L^\infty}^2 \int_{A}^{a_n} \frac{1}{y^2} \int_y^{a_n} \frac{1}{x^3}  dx dy \le \|g\|_{L^\infty}^2 \int_{A}^{\infty} \frac{1}{y^2} \int_A^{\infty} \frac{1}{x^3}  dx dy = \frac{\|g\|_{L^\infty}^2}{2A^3} \qquad \text{for all $n \in \N$.} \label{J-n-est-3}
\end{align}
Consequently, by sending $A \to \infty$ in (\ref{J-n-est-1}), (\ref{J-n-est-2}) and (\ref{J-n-est-3}), we deduce that
\begin{equation}
\label{J-n-est}
\limsup_{n \to \infty} J_n \le \frac{{g^2(0)}}{4}.
\end{equation}
To estimate $K_n$, we fix $ \varepsilon \in ( 0 , 1 ) $ and define, for any $ n \in \mathbb{N} $,
\begin{equation*}
H_n^{+} = \{ x \in ( a_n , +\infty ) : w_n(x) \geq  \varepsilon x^\frac{N-1}{N} \}\qquad \text{and}\qquad
H_n^{-} = \{ x \in ( a_n , +\infty ) : w_n(x) <  \varepsilon x^\frac{N-1}{N} \}.
\end{equation*}
Then, for $n$ sufficiently large we have
\begin{align}
&\int_{ [ a_n , +\infty ) \cap H_n^{+}} \int_{ [ y , +\infty ) \cap H_n^{+}} \frac{y e^{w_n^{N/(N-1)}(y) - y} g( N^\frac{1-N}{N} \omega_{N-1}^{-\frac{1}{N}} w_n(y) )  }{ ( 1+N^\frac{1-N}{N} \omega_{N-1}^{-\frac{1}{N}} w_n(y) )^\frac{N}{2(N-1)} }\nonumber  \\ & \nonumber \times \frac{e^{ w_n^{N/(N-1)} (x) - x} g( N^\frac{1-N}{N} \omega_{N-1}^{-\frac{1}{N}} w_n(x) ) }{ ( 1+N^\frac{1-N}{N} \omega_{N-1}^{-\frac{1}{N}} w_n(x) )^\frac{N}{2(N-1)} } dx dy \nonumber  \\
&\footnotesize \leq \int_{ [ a_n , +\infty ) \cap H_n^{+}} \int_{ [ y , +\infty ) \cap H_n^{+}} \frac{y g( N^\frac{1-N}{N} \omega_{N-1}^{-\frac{1}{N}} w_n(y) )  g( N^\frac{1-N}{N} \omega_{N-1}^{-\frac{1}{N}} w_n(x) )  }{ \left( 1 + \varepsilon N^\frac{1-N}{N} \omega_{N-1}^{-\frac{1}{N}}  y^\frac{N-1}{N} \right)^\frac{N}{N-1} }\nonumber \\ & \times  e^{ w_n^{N/(N-1)}(x) + w_n^{N/(N-1)}(y) - (x+y)} dxdy \nonumber  \\
&\leq \frac{N \omega_{N-1}^\frac{1}{N-1}}{\varepsilon^{N/(N-1)}} \int_{a_n}^\infty \int_{y}^\infty e^{ w_n^{N/(N-1)}(x) + w_n^{N/(N-1)}(y) - (x+y)}\nonumber \times \\ & \times  g( N^\frac{1-N}{N} \omega_{N-1}^{-\frac{1}{N}} w_n(y) )  g( N^\frac{1-N}{N} \omega_{N-1}^{-\frac{1}{N}} w_n(x) ) dxdy \nonumber \\
&\nonumber = \frac{N \omega_{N-1}^\frac{1}{N-1}}{2\varepsilon^{N/(N-1)}} \left( \int_{a_n}^\infty e^{ w_n^\frac{N}{N-1}(x) - x} g( N^\frac{1-N}{N} \omega_{N-1}^{-\frac{1}{N}} w_n(x) ) dx \right)^2 \\ &\le \frac{N \omega_{N-1}^\frac{1}{N-1} e^{2(1+\frac{1}{2}+\cdots+\frac{1}{N}) } }{2\varepsilon^{N/(N-1)}} \left( {C_g} + o(1) \right)^2. \label{K-n-upper-bound-0-0-1}
\end{align}
The equality in the last line follows from the symmetry of the integrand and Fubini's theorem, and in the last inequality we used  (\ref{eq:cc-estimate}) together with the fact that
$$
w_n(x) \ge w(a_n) = (a_n - 3 \log a_n)^\frac{N-1}{N} \qquad \text{on $[a_n,\infty)$,}
$$
while $(a_n - 3 \log a_n)^\frac{N-1}{N} \to \infty$ as $a_n \to \infty$. Moreover, for $n$ sufficiently large,
\begin{align}
&\nonumber \int_{ [ a_n , +\infty ) \cap H_n^{+} } \int_{ [ y , +\infty ) \cap H_n^{-}} \frac{y e^{w_n^{N/(N-1)}(y) - y} g( N^\frac{1-N}{N} \omega_{N-1}^{-\frac{1}{N}} w_n(y) )  }{ ( 1+N^\frac{1-N}{N} \omega_{N-1}^{-\frac{1}{N}} w_n(y) )^\frac{N}{2(N-1)} }\times \\ & \times \frac{e^{ w_n^{N/(N-1)} (x) - x} g( N^\frac{1-N}{N} \omega_{N-1}^{-\frac{1}{N}} w_n(x) ) }{ ( 1+N^\frac{1-N}{N} \omega_{N-1}^{-\frac{1}{N}} w_n(x) )^\frac{N}{2(N-1)} } dx dy \nonumber \\
&\leq \int_{ [ a_n , +\infty ) \cap H_n^{+} } \int_{ [ y , +\infty ) \cap H_n^{-}} \frac{y e^{w_n^\frac{N}{N-1}(y) - y} e^{(\varepsilon^{N/(N-1)} - 1) x}  }{ ( 1 + \varepsilon N^\frac{1-N}{N} \omega_{N-1}^{-\frac{1}{N}}  y^\frac{N-1}{N} )^\frac{N}{N-1} }\nonumber \times \\ & \times g( N^\frac{1-N}{N} \omega_{N-1}^{-\frac{1}{N}} w_n(x) ) g( N^\frac{1-N}{N} \omega_{N-1}^{-\frac{1}{N}} w_n(y) ) dxdy\nonumber \\
&\leq \left( {C_g} + o(1) \right)^2 \times \frac{ e^{(\varepsilon^{N/(N-1)} - 1) a_n} }{1-\varepsilon^{N/(N-1)}}\nonumber \times \\ & \times \sup_{y \in [ a_n , +\infty ) } \frac{y}{( 1 + \varepsilon N^\frac{1-N}{N} \omega_{N-1}^{-\frac{1}{N}}  y^\frac{N-1}{N} )^\frac{N}{N-1} } \times \int_{a_n}^\infty e^{w_n^\frac{N}{N-1}(y) - y} dy \nonumber\\
&\leq \left( {C_g} + o(1) \right)^2 \times \frac{ N \omega_{N-1}^\frac{1}{N-1} e^{(\varepsilon^\frac{N}{N-1} - 1) a_n} }{ \varepsilon^\frac{N}{N-1} ( 1-\varepsilon^\frac{N}{N-1} )}  \times \int_{a_n}^\infty e^{w_n^\frac{N}{N-1}(y) - y} dy \to 0 \quad \text{as}~ n \to \infty, \label{K-n-upper-bound-2}
\end{align}
while similarly
\begin{align*}
&\limsup_{n \to \infty} \int_{ [ a_n , +\infty ) \cap H_n^{-} } \int_{ [ y , +\infty ) \cap H_n^{+}} \frac{y e^{w_n^{N/(N-1)}(y) - y} g( N^\frac{1-N}{N} \omega_{N-1}^{-\frac{1}{N}} w_n(y) )  }{ ( 1+N^\frac{1-N}{N} \omega_{N-1}^{-\frac{1}{N}} w_n(y) )^\frac{N}{2(N-1)} }\\ & \times  \frac{e^{ w_n^{N/(N-1)} (x) - x} g( N^\frac{1-N}{N} \omega_{N-1}^{-\frac{1}{N}} w_n(x) ) }{ ( 1+N^\frac{1-N}{N} \omega_{N-1}^{-\frac{1}{N}} w_n(x) )^\frac{N}{2(N-1)} } dx dy \leq 0,
\end{align*}
and
\begin{align*}
&\limsup_{n \to \infty} \int_{ [ a_n , +\infty ) \cap H_n^{-} } \int_{ [ y , +\infty ) \cap H_n^{-}}\frac{y e^{w_n^{N/(N-1)}(y) - y} g( N^\frac{1-N}{N} \omega_{N-1}^{-\frac{1}{N}} w_n(y) )  }{ ( 1+N^\frac{1-N}{N} \omega_{N-1}^{-\frac{1}{N}} w_n(y) )^\frac{N}{2(N-1)} }\\ & \times \frac{e^{ w_n^{N/(N-1)} (x) - x} g( N^\frac{1-N}{N} \omega_{N-1}^{-\frac{1}{N}} w_n(x) ) }{ ( 1+N^\frac{1-N}{N} \omega_{N-1}^{-\frac{1}{N}} w_n(x) )^\frac{N}{2(N-1)} } dx dy \leq 0.
\end{align*}
Combining these asymptotic estimates with (\ref{K-n-upper-bound-0-0-1}) and (\ref{K-n-upper-bound-2}), we obtain that
$$
K_n \le \frac{N \omega_{N-1}^\frac{1}{N-1} e^{2(1+\frac{1}{2}+\cdots+
\frac{1}{N-1}}) } {2\varepsilon^{N/(N-1)}} \left( {C_g} + o(1) \right)^2 \qquad \text{as $n \to \infty$.}
$$
Since $\varepsilon$ can chosen arbitrarily close to $1$ in this estimate, we obtain that 
\begin{equation}
 \label{K-n-upper-bound}
\limsup_{n \to \infty} K_n \leq \frac{N}{2}  \omega_{N-1}^\frac{1}{N-1} C_g^2 e^{2(1+\frac{1}{2}+\cdots+\frac{1}{N-1})  } .
\end{equation}
Collecting the asymptotic estimates for $I_n, J_n$ and $K_n$ given in (\ref{I-n-upper-bound}),(\ref{J-n-est}) and (\ref{K-n-upper-bound}), we get \eqref{upper_bound}. This finishes the proof of (\ref{limsup-SCS}).
\end{proof}

As announced,  we will now readily complete the 

\begin{proof}[Proof of Theorem~\ref{sec:introduction-main-thm-critical-sufficient-cond}]
  By assumption, we have
  \begin{equation}
    \label{eq:m-1-F-assumption}
    m_1(F)> \frac{2 \omega_{N-1}^2}{N^3} \Bigl(\frac{g^2(0)}{4} + \frac{N}{2}  \omega_{N-1}^\frac{1}{N-1} C_g^2 e^{2(1+\frac{1}{2}+\cdots+\frac{1}{N-1})  } \Bigr).          
  \end{equation}
  Let $(u_n)_n$ be a maximizing sequence in $\cB_1$ for $m_F$.  By the Polya-Szego inequality, we may assume that $u_n \in \cB_1^*$ for $n \in \N$. Moreover, we may pass to a subsequence with $u_n \weak u \in H^1_0(B_1)$. If $(u_n)_n$ is an $SCS$-sequence, then it follows from Theorem~\ref{sec:introduction-CC-theorem}(i) that
  $$
  m_1(F)= \lim_{n \to \infty}\Phi(u_n) \le \frac{2 \omega_{N-1}^2}{N^3} \Bigl(\frac{g^2(0)}{4} + \frac{N}{2}  \omega_{N-1}^\frac{1}{N-1} C_g^2 e^{2(1+\frac{1}{2}+\cdots+\frac{1}{N-1})}  \Bigr),          
  $$
  contrary to (\ref{eq:m-1-F-assumption}). Hence $(u_n)_n$ is no $SCS$-sequence, which implies that $u \not = 0$ and therefore
  $$
  m_1(F)= \lim_{n \to \infty}\Phi(u_n)= \Phi(u),
  $$
 by Proposition~\ref{abstract-strong-continuity}, showing that $m_1(F)$ is attained at $u \in \cB_1^*$.
\end{proof}

\section{Sharpness of the upper limit and convergence from above}
\label{sec:sharpn-upper-limit}
In this section, we verify that the upper estimate in (\ref{limsup-SCS}) is sharp. Specifically, we address the second assertion of Theorem~\ref{sec:introduction-CC-theorem}: we will show the existence of a Schwarz symmetric concentrating sequence compliant with (\ref{sharp-lim-SCS}), given that
\begin{equation}
  \label{eq:existence-C-g-limit}
 \exists\, C_g = \lim_{t \to \infty}g(t)  \in (0,\infty). 
\end{equation}
Should the condition (\ref{g-2-prime}) also hold, the relation (\ref{sharp-lim-SCS-above}) is obtained.

As in the last section, we will consider the transformation $r= e^{-t/N}$ of the radial variable. By Lemma~\ref{transformation}, the proof of (\ref{sharp-lim-SCS}) and (\ref{sharp-lim-SCS-above}) is reduced to the following Proposition.

\begin{proposition}
  \label{prop-transformed-sharp-lim}
Let $g$ satisfy $(g_0)$, $(g_1)$ and (\ref{eq:existence-C-g-limit}). There exists a sequence of increasing functions $w_n \in W^{1,N}_{loc}(\R_+)$ with $w_n(0) = 0$ for all $n \in \N$ and the following properties:
  \begin{itemize}
  \item[(i)] 
  \begin{equation}
    \label{eq:local-zero-deriv-conv-0-sharp}
\int_0^\infty (w^\prime_n)^N dt = 1  \quad \text{for all $n \in \N$,}\quad \text{and}\quad \int_0^A w^\prime_n dt \to 0 \quad \text{as $n \to \infty$ for every $A>0$.}
\end{equation}
\item[(ii)]         \begin{equation}
          \label{sharp-lim-SCS-transformed}
          \begin{split}
\lim_{n \to \infty}\int_0^\infty \frac{y e^{w_n^{N/(N-1)}(y) - y} g( N^\frac{1-N}{N} \omega_{N-1}^{-\frac{1}{N}} w_n(y) )  }{ ( 1+N^\frac{1-N}{N} \omega_{N-1}^{-\frac{1}{N}} w_n(y) )^\frac{N}{2(N-1)} } \int_{y}^\infty \frac{ e^{ w_n^{N/(N-1)} (x) - x} g( N^\frac{1-N}{N} \omega_{N-1}^{-\frac{1}{N}} w_n(x) ) }{( 1+N^\frac{1-N}{N} \omega_{N-1}^{-\frac{1}{N}} w_n(x) )^\frac{N}{2(N-1)}} dx dy \\
= \frac{{g^2(0)}}{4} + \frac{N}{2}  \omega_{N-1}^\frac{1}{N-1} C_g^2 e^{2(1+\frac{1}{2}+\cdots+\frac{1}{N-1})}).
\end{split}
\end{equation}
\item[(iii)] If in addition (\ref{g-2-prime}) holds, then 
        \begin{equation}
          \label{sharp-lim-SCS-above-transformed}
\begin{split}
\int_0^\infty \frac{y e^{w_n^{N/(N-1)}(y) - y} g( N^\frac{1-N}{N} \omega_{N-1}^{-\frac{1}{N}} w_n(y) )  }{ ( 1+N^\frac{1-N}{N} \omega_{N-1}^{-\frac{1}{N}} w_n(y) )^\frac{N}{2(N-1)} } \int_{y}^\infty \frac{ e^{ w_n^{N/(N-1)} (x) - x} g( N^\frac{1-N}{N} \omega_{N-1}^{-\frac{1}{N}} w_n(x) ) }{( 1+N^\frac{1-N}{N} \omega_{N-1}^{-\frac{1}{N}} w_n(x) )^\frac{N}{2(N-1)}} dx dy \\
> \frac{{g^2(0)}}{4} + \frac{N}{2}  \omega_{N-1}^\frac{1}{N-1} C_g^2 e^{2(1+\frac{1}{2}+\cdots+\frac{1}{N-1}}).
\end{split}
\end{equation}  
for large $n$.
\end{itemize}
\end{proposition}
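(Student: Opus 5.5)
We construct $w_n$ explicitly, following the Carleson--Chang / de Figueiredo--do \'O--Ruf extremal sequences, in a form adapted to the splitting $I_n+J_n+K_n$ from the proof of Theorem~\ref{sec:introduction-CC-theorem}(i). Fix a parameter $a=a_n\to\infty$ and a small slope $\delta_n\to0$ and define
$$
w_n(t)=\begin{cases} \delta_n t, & 0\le t\le a_n,\\[2pt] \bigl(h_n(t)\bigr)^{\frac{N-1}{N}}, & a_n\le t\le b_n,\\[2pt] w_n(b_n), & t\ge b_n,\end{cases}
$$
where on the middle interval $w_n^{N/(N-1)}(t)$ follows the Carleson--Chang extremal profile: $w_n^{N/(N-1)}(t)-t \approx -\,c_N(t-a_n)$, up to a constant, chosen so that $\int_{a_n}^\infty e^{w_n^{N/(N-1)}-x}\,dx\to e^{1+\frac12+\cdots+\frac1{N-1}}$. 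We then fix the free parameters ($\delta_n$, $b_n$, the additive constant) by the normalization $\int_0^\infty (w_n')^N=1$. Since $\delta_n^N a_n\to0$, the first segment carries negligible energy, and $\int_0^A w_n'\to0$ for each fixed $A$. This gives (i). The natural concrete choice is the profile from \cite[p.~140]{figuereido-do-o-ruf}, with an initial linear piece of length $a_n$ and slope $\delta_n$ chosen so that $w_n(a_n)\to\infty$ but $w_n^{N/(N-1)}(t)\le t-3\log t$ on $[3,a_n]$. Lemma~\ref{xi-choice} would be used to pick the junction point so that all the constraints hold simultaneously.

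For (ii), split the double integral as $I_n+J_n+K_n$ exactly as in \eqref{int-splitting}. We already know $\limsup I_n=0$ and $\limsup J_n\le g^2(0)/4$. For the lower bound on $J_n$, use $w_n\to0$ locally uniformly and continuity of $g$ at $0$; Fatou then gives $\liminf J_n\ge g^2(0)/4$. For $K_n$, on $[a_n,\infty)$ we have $w_n\to\infty$, so $g(\cdot)\to C_g$ by \eqref{eq:existence-C-g-limit}. The weight $y/(1+N^{\frac{1-N}{N}}\omega_{N-1}^{-1/N}w_n(y))^{\frac{N}{N-1}}$ equals $N\omega_{N-1}^{1/(N-1)}\,y/w_n(y)^{N/(N-1)}(1+o(1))$. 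Since the mass of $e^{w_n^{N/(N-1)}-x}$ concentrates where $w_n^{N/(N-1)}(y)=y(1+o(1))$, this ratio tends to $N\omega_{N-1}^{1/(N-1)}$. By symmetry, $\int_{a_n}^\infty\int_y^\infty=\frac12(\int_{a_n}^\infty)^2$, so $K_n\to\frac N2\omega_{N-1}^{1/(N-1)}C_g^2e^{2(1+\cdots+\frac1{N-1})}$. The tail $t\ge b_n$, where $w_n$ is constant, contributes a factor $\int_{b_n}^\infty e^{c-x}dx$. This piece is part of the Carleson--Chang constant and is handled by the explicit choice.

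For (iii), we need a quantitative version of the above. Write the total as the limit value plus an error $E_n$, and show that $E_n$ is eventually positive. The positive gain comes from $g(s)-C_g\ge c\,s^{-\rho}$ with $s\sim w_n\sim a_n^{(N-1)/N}$. This gives a relative gain of order $a_n^{-\rho(N-1)/N}$ in $K_n$, which is larger than $a_n^{-1/N}$ because $\rho<\frac1{N-1}$. The losses are of four kinds:
\begin{itemize}
\item[(a)] the factor $(1+c\,w_n)^{-N/(N-1)}$ versus $(c\,w_n)^{-N/(N-1)}$, of relative order $w_n^{-1}\sim a_n^{-(N-1)/N}$;
\item[(b)] the deviation $y/w_n^{N/(N-1)}(y)-1$, of order $\log a_n/a_n$;
\item[(c)] the convergence rate of the Carleson--Chang integral to $e^{1+\cdots+\frac1{N-1}}$;
\item[(d)] the deviation of $J_n$ from $g^2(0)/4$, where the truncation at $a_n$ gives only exponentially small losses and where $g\ge g(0)$ near $0$ (Remark~\ref{remark-g-0-assumption}) helps.
\end{itemize}

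The main obstacle is (c) together with the energy bookkeeping. The small slope $\delta_n$ costs energy $\delta_n^Na_n$, which shifts $w_n^{N/(N-1)}$ on $[a_n,\infty)$ down by a relative amount of about $\frac{1}{N-1}\delta_n^Na_n$. This reduces the Carleson--Chang integral, and the reduction must be $o(a_n^{-\rho(N-1)/N})$. The first choice I would try is to take $\delta_n$ so small that $\delta_n^Na_n^{2}\to0$ while still $\int_0^A w_n'\to0$ (automatic). Then I would verify via a Taylor expansion of the Carleson--Chang profile that the convergence error in (c) is $O(a_n^{-1/N})$ or better, which is precisely where the threshold $\rho<\frac1{N-1}$ should enter.
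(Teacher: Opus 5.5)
Your splitting into $I_n+J_n+K_n$, the use of the upper bound from the proof of Theorem~\ref{sec:introduction-CC-theorem}(i), Fatou for $J_n$ and the symmetrization of $K_n$ all match the paper. The gap is at the core: the sequence is never actually constructed, and the one you describe cannot concentrate. If $w_n'=\delta_n$ on $[0,a_n]$ with $\delta_n^Na_n\to0$, then $w_n^{N/(N-1)}(a_n)=a_n(\delta_n^Na_n)^{1/(N-1)}=o(a_n)$. By H\"older, $w_n(t)\le\delta_na_n+(t-a_n)^{(N-1)/N}$, so $w_n^{N/(N-1)}(t)-t\le-a_n(1-o(1))$ on every window $[a_n,a_n+o(a_n)]$, which is exactly where you place the Carleson--Chang profile. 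So the first segment does not lower the exponent by a relative $\frac{1}{N-1}\delta_n^Na_n$; it wastes an absolute amount of order $a_n$. More generally, $w_n^{N/(N-1)}(t)\ge t-O(1)$ forces near-equality in $w_n(t)\le t^{(N-1)/N}(\int_0^t(w_n')^N)^{1/N}$, i.e.\ $w_n'\approx t^{-1/N}$ on essentially all of $[0,t]$. Hence the sequence must be of Moser type up to the concentration point and carry almost all the energy there. In the paper this is $w_n(t)=tn^{-1/N}(1-\delta_n)^{(N-1)/N}$ on $[0,n]$, with energy $(1-\delta_n)^{N-1}$. It is followed by the de Figueiredo--do \'O--Ruf logarithmic profile, with $A_n$ fixed by the normalization; a linear-then-constant piece only produces the constant $N<e^{1+\frac12+\cdots+\frac1{N-1}}$. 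The convergence $\int e^{w_n^{N/(N-1)}-x}dx\to e^{1+\frac12+\cdots+\frac1{N-1}}$, which you simply impose (``chosen so that''), is the real content of (ii). In the paper it is Lemma~\ref{precise-A-n-asymptotics}, derived from the exact energy identity (\ref{eq:defining-eq-An}). Lemma~\ref{xi-choice} is not used to pick any junction point; it only enters the lower bound for $\tilde I_n^1$.

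For (iii) you leave the decisive quantitative step open, and the mechanism you guess is not the one that works. The missing idea is the tunable energy deficit $\delta_n=\frac{s\log n}{n}$. It gives $w_n^{N/(N-1)}(n)=n-s\log n$ and $A_n\sim e^{-(1+\frac12+\cdots+\frac1{N-1})}n^{-s}$, hence a loss of at most order $n^{-s}$ in the Carleson--Chang integral. The gain from $g(t)\ge C_g+C_2t^{-\rho}$ at $t\sim n^{(N-1)/N}$ is of order $n^{-(N-1)\rho/N}$, and your loss (a) is $O(n^{-(N-1)/N})$. Choosing $s>\frac{(N-1)\rho}{N}$ therefore makes the gain win. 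Shrinking your $\delta_n$ further (e.g.\ $\delta_n^Na_n^2\to0$) does nothing for (c); by the above it only makes things worse. The threshold $\rho<\frac1{N-1}$ also does not enter through (c). In the paper it comes from the $\tilde J_n$-term: only $g\ge g(0)$ on $[0,t_0]$ is used there, so the denominator costs $O(n^{\tau-1/N})$, which forces $\tau<\frac1N-s$ and hence $s<\frac1N$. Your claim that (d) is exponentially small is in fact true, but not for the reason you give. Since $\alpha_N(N^{\frac{1-N}{N}}\omega_{N-1}^{-\frac1N}w)^{\frac{N}{N-1}}=w^{\frac{N}{N-1}}$, the $y$-integrand equals $ye^{-y}F(N^{\frac{1-N}{N}}\omega_{N-1}^{-\frac1N}w_n(y))$. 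This is $\ge g(0)ye^{-y}$ because $F$ is increasing by $(g_1)$. The bound $g\ge g(0)$ near $0$ alone does not control the factor $(1+N^{\frac{1-N}{N}}\omega_{N-1}^{-\frac1N}w_n)^{-\frac{N}{2(N-1)}}$.
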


The remainder of this section is devoted to the proof of Proposition~\ref{prop-transformed-sharp-lim}. We fix $s  \in (0,\frac{1}{N})$ in the following. Moreover, for $n \in \mathbb{N}$, we set $ \delta_n = \frac{s \log n}{n}$ and define
$w_n \in W^{1,N}_{loc}(\R_+)$ by 
\begin{equation}
  \label{def-w-n}
w_n(t) = \left\{
\begin{aligned}
&\frac{t}{n^{1/N}} \left( 1 - \delta_n \right)^{(N-1)/N},&&\qquad 0 \leq t \leq n, \\
&\frac{N-1}{ ( n (1-\delta_n) )^{1/N}} \log \frac{ A_n + 1 }{ A_n + e^{-(t-n)/(N-1)} } + (n(1-\delta_n))^{(N-1)/N},&&\qquad t \ge n.
\end{aligned}
\right.
\end{equation}
Here we choose $A_n>0$ such that
\begin{equation}
\int_0^\infty |w_n'(t)|^N \,dt = 1,\qquad \text{i.e.}\quad \int_{n}^\infty |w_n'(t)|^N\,dt = 1 - (1-\delta_n)^{N-1}.
  \label{eq:n-infty-cond}
\end{equation}
To see that such a value $A_n$ exists, we note that (\ref{eq:n-infty-cond}) is equivalent to
$$
\frac{N-1}{n(1-\delta_n)}\Bigl(\log \frac{A_n+1}{A_n} - \sum_{k=1}^{N-1} \frac{1}{(N-k) (A_n +1)^{N-k} }\Bigr) = 1 - (1-\delta_n)^{N-1},
$$
i.e.,
\begin{equation}
  \label{eq:defining-eq-An}
\frac{A_n+1}{A_n} = e^{\sum_{k=1}^{N-1} \frac{1}{(N-k) (A_n +1)^{N-k} }} n^{-\frac{s}{N-1}} e^{\frac{n}{N-1} [1-(1-\delta_n)^N]}.
\end{equation}
So $A_n$ is chosen as an intersection point of the functions
$$
x \mapsto f(x):= \frac{x+1}{x}\quad \text{and}\quad x \mapsto h_n(x):= e^{\sum_{k=1}^{N-1} \frac{1}{(N-k) (x +1)^{N-k} }} n^{-\frac{s}{N-1}} e^{\frac{n}{N-1} [1-(1-\delta_n)^N]},
$$
which exists since
$$
\lim_{x \to 0^+}f(x)=+\infty > e^{\sum_{k=1}^{N-1} \frac{1}{N-k}}   n^{-\frac{s}{N-1}} e^{\frac{n}{N-1} [1-(1-\delta_n)^N]} = h_n(0)
$$
and
$$
\lim_{x \to \infty}f(x) = 1 <    n^{-\frac{s}{N-1}} e^{\frac{n}{N-1} [1-(1-\delta_n)^N]} =:h_{n,\infty} = \lim_{x \to \infty}h_n(x).
$$
This choice of $w_n$ is inspired by Figueiredo et al. \cite{figuereido-do-o-ruf}, who considered the special case $s=2$ in the definition of $w_n$. For our purposes, it turns out that the complementary choice of $s \in (0,\frac{1}{N})$ is essential. 

Partly following \cite{figuereido-do-o-ruf}, we first note some elementary estimates. 
Since $h_n(x) \ge h_{n,\infty} \to \infty$ as $n \to \infty$ for $x>0$, it follows that $A_n \to 0$. More precisely, we have

\begin{lemma}
  \label{precise-A-n-asymptotics}
As $n \to \infty$, we have   
\begin{equation}
  \label{eq:A-n-precise-est}
A_n = n^{-s} \frac{1}{e^{1+\frac{1}{2}+\cdots+\frac{1}{N-1}} +O(n^{-s})}= \frac{1}{e^{1+\frac{1}{2}+\cdots+\frac{1}{N-1} }} n^{-s} +O(n^{-2s})
\end{equation}
and
\begin{equation}
\label{Ruf-Remainder-est}
\int_{n}^\infty e^{w_n^{N/(N-1)}(t)-t}\,dt \ge e^{1+\frac{1}{2}+\cdots+\frac{1}{N-1}} - n^{-s} + o(n^{-s}). 
\end{equation}
\end{lemma}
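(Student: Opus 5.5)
The plan is to reduce both assertions to the defining relation \eqref{eq:defining-eq-An} through one exact identity. Throughout, write $H:=1+\frac12+\cdots+\frac1{N-1}$ and $S(x):=\sum_{j=1}^{N-1}\frac{1}{j(1+x)^j}$, so that $S(0)=H$ and $S'(0)=-(N-1)$.

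\emph{Asymptotics of $A_n$.} Since $\delta_n=\frac{s\log n}{n}$, a Taylor expansion gives
\[
n\bigl[1-(1-\delta_n)^N\bigr]=Ns\log n+O\Bigl(\frac{\log^2 n}{n}\Bigr),
\]
so \eqref{eq:defining-eq-An} becomes
\[
1+\frac{1}{A_n}=e^{S(A_n)}\,n^{s}\Bigl(1+O\bigl(\tfrac{\log^2 n}{n}\bigr)\Bigr).
\]
We already know $A_n\to0$. Hence $e^{S(A_n)}=e^{H}(1+O(A_n))$, and $n^s\frac{\log^2 n}{n}\to0$ because $s<\frac1N<1$. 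These give $\frac1{A_n}=e^Hn^s(1+O(A_n))$. A first bootstrap yields $A_n\asymp n^{-s}$, and a second yields \eqref{eq:A-n-precise-est}.

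\emph{Lower bound for the integral.} On $[n,\infty)$ set $m:=(n(1-\delta_n))^{\frac{N-1}{N}}$ and $L(t):=\log\frac{A_n+1}{A_n+e^{-(t-n)/(N-1)}}$. Then
\[
w_n=m\Bigl(1+\frac{(N-1)L}{n(1-\delta_n)}\Bigr),
\]
and Bernoulli's inequality with exponent $\frac{N}{N-1}>1$ gives
\[
w_n^{N/(N-1)}\ge n(1-\delta_n)+NL.
\]
Since $e^{-n\delta_n}=n^{-s}$, we obtain
\[
\int_n^\infty e^{w_n^{N/(N-1)}-t}\,dt\ \ge\ n^{-s}(A_n+1)^N\int_0^\infty\frac{e^{-\tau}\,d\tau}{(A_n+e^{-\tau/(N-1)})^N}.
\]
Substitute $u=e^{-\tau/(N-1)}$ and then $v=\frac{u}{A_n+u}$. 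The last integral becomes $(N-1)\int_0^1\frac{u^{N-2}}{(A_n+u)^N}du$, and this equals exactly $A_n^{-1}(1+A_n)^{-(N-1)}$. The right-hand side therefore equals $n^{-s}\bigl(1+\frac1{A_n}\bigr)=e^{S(A_n)}\bigl(1+o(n^{-s})\bigr)$, using the relation for $A_n$ above.

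\emph{Main obstacle: the constant in front of $n^{-s}$.} Expanding $e^{S(A_n)}=e^H\bigl(1-(N-1)A_n+O(A_n^2)\bigr)$ and inserting \eqref{eq:A-n-precise-est} gives only
\[
\int_n^\infty e^{w_n^{N/(N-1)}-t}\,dt\ \ge\ e^H-(N-1)n^{-s}+o(n^{-s}).
\]
This is exactly \eqref{Ruf-Remainder-est} when $N=2$, but for $N\ge3$ it is weaker than the stated coefficient $1$. I would first try to recover the missing $(N-2)n^{-s}$ from the terms discarded by Bernoulli's inequality, using the exact second-order term $\frac{N}{2(N-1)}\frac{L^2}{n}$ in $w_n^{N/(N-1)}$. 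However, $L\lesssim\log n$, so that term is $O(\log^2 n/n)=o(n^{-s})$ and cannot supply a gain of order $n^{-s}$. The only other possible source is an exact re-derivation of \eqref{eq:defining-eq-An}, checking whether the energy identity $\int_n^\infty|w_n'|^N\,dt=\frac{N-1}{n(1-\delta_n)}(\cdots)$ is correct as written. If both checks confirm the computation, then for $N\ge3$ this argument proves \eqref{Ruf-Remainder-est} only with $-(N-1)n^{-s}$ in place of $-n^{-s}$, and I would flag this discrepancy with the stated bound.
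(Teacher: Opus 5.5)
Your argument follows the paper's own proof. The paper gets the asymptotics of $A_n$ the same way, by inserting $n^{-s/(N-1)}e^{\frac{n}{N-1}[1-(1-\delta_n)^N]}=n^s e^{O(n\delta_n^2)}$ into \eqref{eq:defining-eq-An}. For the integral, it quotes from Figueiredo et al.\ the bound $\int_n^\infty e^{w_n^{N/(N-1)}-t}\,dt\ge \frac{A_n+1}{A_n n^s}$, which you derive explicitly via Bernoulli's inequality and the substitution $v=u/(A_n+u)$. It then expands, as you do, to $e^{H}(1-(N-1)A_n)(1+O(\log^2 n/n))=e^H-(N-1)n^{-s}+O(n^{-2s})$, where $H=1+\frac12+\cdots+\frac1{N-1}$. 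Its final step replaces this by $e^H-n^{-s}+o(n^{-s})$, which simply drops the factor $N-1$. That step is valid only for $N=2$, the planar case being generalized.

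The discrepancy you flag is real, and both of your checks confirm it.

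\emph{Energy identity.} The identity behind \eqref{eq:defining-eq-An} is correct: $\int_n^\infty|w_n'|^N\,dt=\frac{N-1}{n(1-\delta_n)}\int_0^1\frac{u^{N-1}}{(A_n+u)^N}\,du$. The same substitution turns the last integral into $\int_0^{1/(1+A_n)}\frac{v^{N-1}}{1-v}\,dv=\log\frac{A_n+1}{A_n}-\sum_{j=1}^{N-1}\frac{1}{j(1+A_n)^j}$.

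\emph{Second-order term.} Since $\frac{N}{N-1}\le 2$, the third derivative of $(1+x)^{N/(N-1)}$ is nonpositive. Hence $(1+x)^{N/(N-1)}\le 1+\frac{N}{N-1}x+\frac{N}{2(N-1)^2}x^2$ for $x\ge 0$. So your observation gives the matching upper bound $\int_n^\infty e^{w_n^{N/(N-1)}-t}\,dt\le \frac{A_n+1}{A_n n^s}\,e^{O(\log^2 n/n)}$.

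Therefore the integral equals $e^H-(N-1)n^{-s}+o(n^{-s})$. As stated, \eqref{Ruf-Remainder-est} is false for $N\ge 3$, and your version with $-(N-1)n^{-s}$ is the correct one.

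The slip does not affect the rest of the paper. In the bound for $\tilde I_n^2$, in \eqref{low-bound-K-n-4} and in \eqref{low-bound-K-n-5}, only a lower bound of the form $e^H-O(n^{-s})$ is used. For part (ii) the $n^{-s}$ terms disappear in the $\liminf$. In part (iii) they are absorbed into the $O(n^{-s})$ remainder, which is dominated by the positive $n^{-(N-1)\rho/N}$ term because $s>(N-1)\rho/N$.
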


\begin{proof}
As $n \to \infty$, we have  
\begin{equation}
  \label{eq:intermediate-1-section-sharp}
n^{-\frac{s}{N-1}} e^{\frac{n}{N-1} [1-(1-\delta_n)^N]} = n^{-\frac{s}{N-1}} e^{\frac{n}{N-1} ( N \delta_n + O(\delta_n^2))} = n^s e^{O(n\delta_n^2)}.
\end{equation}
Inserting this expansion in ~(\ref{eq:defining-eq-An}) gives 
$A_n = O(n^{-s})$ and
\begin{align}
  \label{eq:intermediate-2-section-sharp}
&\frac{A_n+1}{A_n n^s} =e^{\sum_{k=1}^{N-1} \frac{1}{(N-k) (A_n +1)^{N-k} }} e^{O(n\delta_n^2)}= e^{\sum_{k=1}^{N-1} \frac{1}{N-k}} (1+O(n^{-s}))(1+O(\frac{\log^2 n}{n}))\nonumber \\ &=e^{\sum_{k=1}^{N-1} \frac{1}{N-k}} +O(n^{-s}).
\end{align}
as $n \to \infty$. We point out that here it is essential that $s<1$. Consequently, 
$$
1+ \frac{1}{A_n}= \frac{A_n+1}{A_n} = n^s( e^{\sum_{k=1}^{N-1} \frac{1}{N-k}} + O(n^{-s}))
$$
and hence (\ref{eq:A-n-precise-est}) follows.\\
To see (\ref{Ruf-Remainder-est}), we follow the calculations in \cite[P. 147]{figuereido-do-o-ruf}, replacing $s=2$ by $s \in (0,\frac{1}{N})$ to see that
$$
  \int_{n}^\infty e^{w_n^{N/(N-1)}(t)-t}\,dt \ge \frac{A_n+1}{A_n n^s},
$$
which by (\ref{eq:intermediate-1-section-sharp}) and (\ref{eq:intermediate-2-section-sharp}) gives 
  \begin{align*}
    \int_{n}^\infty e^{w_n^{N/(N-1)}(t)-t}\,dt &\ge  e^{1+\frac{1}{2}+\cdots+\frac{1}{N-1}} ( 1- (N-1) A_n)(1+ O(\frac{\log^2 n}{n}))\\
 &=e^{1+\frac{1}{2}+\cdots+\frac{1}{N-1}} (1-\frac{ (N-1) n^{-s}}{e^{1+\frac{1}{2}+\cdots+\frac{1}{N-1}}} +  O(n^{-2s}))(1+ O(\frac{\log^2 n}{n})) \\
 &\ge e^{1+\frac{1}{2}+\cdots+\frac{1}{N-1}}- n^{-s} + o(n^{-s}).
\end{align*}
\end{proof}
We also need another technical lemma.
 
\begin{lemma}\label{xi-choice}
For any $ n $, there exists a unique $\xi_n = \xi(n,N) \in (0,1) $ such that \begin{align}\label{technical relationship}
    \int_0^{\xi_n n} e^{n^{-1/(N-1)} y (y^\frac{1}{N-1} - n^\frac{1}{N-1}) } \,dy = \int_{\xi_n n}^n e^{n^{-1/(N-1)} y (y^\frac{1}{N-1} - n^\frac{1}{N-1}) } \,dy,
\end{align} and moreover, $\xi_n \geq (\frac{N-1}{N})^{N-1}.$
\end{lemma}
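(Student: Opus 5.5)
The plan is to rescale to $[0,1]$, get existence and uniqueness from monotonicity, and get the lower bound from a level-set comparison of the two halves of the integrand.

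\textbf{Rescaling.} Substitute $y=nt$. The exponent becomes
\[
n^{-1/(N-1)}y\bigl(y^{\frac{1}{N-1}}-n^{\frac{1}{N-1}}\bigr)=n\,\psi(t),\qquad \psi(t):=t^{1+p}-t,\quad p:=\tfrac{1}{N-1}\in(0,1].
\]
So \eqref{technical relationship} is equivalent to $G_n(\xi)=0$, where
\[
G_n(\xi):=\int_0^{\xi}e^{n\psi(t)}\,dt-\int_\xi^1 e^{n\psi(t)}\,dt .
\]
The integrand is positive and continuous, so $G_n$ is continuous and strictly increasing on $[0,1]$. Moreover $G_n(0)<0<G_n(1)$. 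The intermediate value theorem then gives a unique root $\xi_n\in(0,1)$.

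\textbf{Structure of $\psi$.} The function $\psi$ vanishes at $0$ and $1$, is negative in between, and satisfies $\psi''(t)=(1+p)p\,t^{p-1}>0$. It is therefore strictly convex, with unique minimizer at $t^*$ given by $(1+p)(t^*)^p=1$, i.e.
\[
t^*=\Bigl(\tfrac{N-1}{N}\Bigr)^{N-1}.
\]
This is exactly the claimed lower bound. Since $G_n$ is increasing, it suffices to prove $G_n(t^*)\le 0$, i.e.
\[
\int_0^{t^*}e^{n\psi}\,dt\le\int_{t^*}^1 e^{n\psi}\,dt .
\]

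\textbf{Layer-cake comparison.} For any interval $I$,
\[
\int_I e^{n\psi}\,dt=\int_{-\infty}^0 n e^{nc}\,\bigl|\{t\in I:\psi(t)>c\}\bigr|\,dc .
\]
For a level $c\in(\psi(t^*),0)$, let $a(c)\in(0,t^*)$ and $b(c)\in(t^*,1)$ be the two solutions of $\psi=c$. Then the superlevel set in $[0,t^*]$ has length $a(c)$, and the one in $[t^*,1]$ has length $1-b(c)$. Below $\psi(t^*)$ both sets are the whole interval, so their lengths $t^*$ and $1-t^*$ already compare correctly provided $t^*\le 1/2$.

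Hence it suffices to show $a(c)+b(c)\le 1$ for all such $c$. Using that $\psi$ is increasing on $[t^*,1]$ and that $1-a\ge t^*$ (again from $t^*\le\frac12$), this reduces to
\[
k(a):=\psi(1-a)-\psi(a)\ge 0\qquad\text{for }a\in[0,\tfrac12].
\]
Now $k(0)=k(\tfrac12)=0$, and $k''(a)=\psi''(1-a)-\psi''(a)\le 0$ on $[0,\tfrac12]$. This holds because $\psi''$ is nonincreasing when $p\le1$ and $1-a\ge a$. So $k$ is concave with zero endpoints, hence $k\ge 0$. For $N=2$ everything is symmetric and $\xi_n=\tfrac12$.

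\textbf{Remaining check.} The one auxiliary fact is $t^*=\bigl(1-\tfrac1N\bigr)^{N-1}\le\tfrac12$. It equals $\tfrac12$ at $N=2$ and decreases in $N$ (towards $1/e$). I would verify this via monotonicity of $(N-1)\log(1-1/N)$. This step, together with the bookkeeping of the layer-cake argument at levels below $\psi(t^*)$, is the main point needing care. The concavity argument itself is elementary.
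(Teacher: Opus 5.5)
Your proof is correct, but it takes a different route from the paper's.

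\textbf{The paper's route.} The paper compares $e^{n\psi}$ pointwise with the function $e^{cn(y^2-y)}$, which is symmetric about $\frac12$. Here $c=N^{N-2}/(N^{N-1}-(N-1)^{N-1})$ is chosen so that the two exponents cross exactly at $t^*=(\frac{N-1}{N})^{N-1}$. The sign pattern on either side of $t^*$ comes from concavity of the auxiliary function $y^{1/(N-1)}-cy+c-1$, which vanishes at $t^*$ and $1$. This gives
\[
G_n(t^*)\le\int_0^{t^*}e^{cn(y^2-y)}\,dy-\int_{t^*}^1 e^{cn(y^2-y)}\,dy\le 0,
\]
using the symmetry of the quadratic and $t^*\le\frac12$.

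\textbf{Your route.} You identify $t^*$ as the minimizer of $\psi$. You then compare superlevel-set lengths on the two sides of $t^*$, which reduces everything to the reflection inequality $\psi(1-a)\ge\psi(a)$ on $[0,\frac12]$. You prove that inequality cleanly from the monotonicity of $\psi''$ for $N\ge 2$. One small point: for $N\ge3$, $\psi''$ blows up at $0$, so $k$ is concave on $(0,\frac12]$ and the claim on $[0,\frac12]$ follows by continuity. This route explains where the constant $(\frac{N-1}{N})^{N-1}$ comes from, and it avoids the tuned constant $c$.

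\textbf{A simplification.} Your reflection inequality does more than you use it for. The substitution $t\mapsto 1-t$ gives directly
\[
\int_0^{1/2}e^{n\psi(t)}\,dt\le\int_0^{1/2}e^{n\psi(1-t)}\,dt=\int_{1/2}^1 e^{n\psi(t)}\,dt,
\]
that is, $G_n(\frac12)\le 0$, and hence $\xi_n\ge\frac12\ge t^*$ for every $n$. So the layer-cake step can be dropped entirely, and you obtain the stronger bound $\xi_n\ge\frac12$. The inequality $t^*\le\frac12$ is then needed only in the final comparison.

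That inequality, which the paper merely asserts, follows as you suggest. The derivative of $x\mapsto(x-1)\log(1-\frac1x)$ is $\log(1-\frac1x)+\frac1x\le 0$, so $t^*$ decreases in $N$ from its value $\frac12$ at $N=2$.
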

\begin{proof}
   By applying the change of variable, the condition for $\xi_n \in (0,1)$ in \eqref{technical relationship} can be equivalently rewritten as 
    $$
    \int_0^{\xi_n} e^{n y (y^\frac{1}{N-1} - 1) } \,dy = \int_{\xi_n}^1 e^{n y (y^\frac{1}{N-1} - 1) } \,dy.
    $$
    To show the existence and uniqueness of $\xi_n$, let $$h_n(x) := \int_0^{x} e^{n y (y^\frac{1}{N-1} - 1) } dy- \int_{x}^1 e^{n y (y^\frac{1}{N-1} - 1) } \,dy,\,\,\,  x \in [0,1].$$ For any fixed $n$, $h_n(x)$ is continuous, strictly increasing and $h_n(0) = -h_n(1) < 0$, then there exists unique $\xi_n \in (0,1)$ such that $h_n(\xi_n) = 0$.\\ \noindent  To show the lower bound of $\xi_n$, we consider the function \begin{align*}
        \varphi_n(y) &:= {n (y^\frac{N}{N-1} - y)} - {\frac{N^{N-2}}{ N^{N-1} - (N-1)^{N-1} }  n (y^2-y)} \\ &= n y \bigl( - \frac{N^{N-2}}{ N^{N-1} - (N-1)^{N-1} } y + y^\frac{1}{N-1} \\ &+ (\frac{N^{N-2}}{ N^{N-1} - (N-1)^{N-1}} -1) \bigr) =: n y \psi(y),\,\,\,  y \in [0,1].
    \end{align*}   We note by definition of $\varphi_n$ that $\varphi_n(0) = \varphi_n(1) = \varphi_n( (\frac{N-1}{N})^{N-1} ) = 0$. Moreover, $\psi(y)$ is increasing in $(0,\alpha_N)$ and decreasing in $(\alpha_N,1)$, where $$\alpha_N := (N-1)^\frac{N-1}{2-N} \Big(\frac{N^{N-2}}{ N^{N-1} - (N-1)^{N-1}}\Big)^\frac{N-1}{2-N} \in (0,1) $$ such that $\psi^\prime (\alpha_N) = 0$. Then, we have  $$\varphi_n(y) < 0 \,\,\, \text{in}\,\,\, (0,(\frac{N-1}{N})^{N-1}),$$   while $$\varphi_n(y) > 0\,\,\, \text{in}\,\,\, ((\frac{N-1}{N})^{N-1},1).$$ This implies, using the monotonicity of the exponential function, that \begin{align}\label{phi negative}
        e^{n (y^\frac{N}{N-1} - y)} < e^{\frac{N^{N-2}}{ N^{N-1} - (N-1)^{N-1} }  n (y^2-y)}\,\,\, \text{in}\,\,\, (0,(\frac{N-1}{N})^{N-1})
    \end{align} and \begin{align}\label{phi positive}
        e^{n (y^\frac{N}{N-1} - y)} > e^{\frac{N^{N-2}}{ N^{N-1} - (N-1)^{N-1} }  n (y^2-y)}\,\,\, \text{in}\,\,\, ((\frac{N-1}{N})^{N-1},1).
    \end{align} Since $(\frac{N-1}{N})^{N-1} < \frac{1}{2}$, then by \eqref{phi negative}, \eqref{phi positive} and symmetry of quadratic function, we have 
    $$
    \begin{aligned}
   & h_n\left( \left( \frac{N-1}{N} \right)^{N-1} \right) = \int_0^{\left( \frac{N-1}{N} \right)^{N-1}} e^{n y (y^\frac{1}{N-1} - 1) } dy - \int_{\left( \frac{N-1}{N} \right)^{N-1}}^1 e^{n y (y^\frac{1}{N-1} - 1) } dy \\
    &\leq \int_0^{\left( \frac{N-1}{N} \right)^{N-1}}  e^{\frac{N^{N-2}}{ N^{N-1} - (N-1)^{N-1} }  n (y^2-y)} dy - \int_{\left( \frac{N-1}{N} \right)^{N-1}}^1 e^{\frac{N^{N-2}}{ N^{N-1} - (N-1)^{N-1} }  n (y^2-y)} dy \\
    &\leq \int_0^\frac{1}{2} e^{\frac{N^{N-2}}{ N^{N-1} - (N-1)^{N-1} }  n (y^2-y)} dy - \int_\frac{1}{2}^1 e^{\frac{N^{N-2}}{ N^{N-1} - (N-1)^{N-1} }  n (y^2-y)} dy = 0.
    \end{aligned}
    $$
As a consequence, $\xi_n \geq (\frac{N-1}{N})^{N-1}$ for all $ n \in \mathbb{N}$.
\end{proof}

Next, assuming that $(g_0)$, $(g_1)$ and (\ref{eq:existence-C-g-limit}) holds, we consider a similar decomposition as in (\ref{int-splitting}) for the double integral in (\ref{sharp-lim-SCS-transformed}), with $a_n$ replaced by $n$. First, we have
$$
\begin{aligned}
  \tilde{I}_n &:= \int_{0}^{n} \int_{n}^{+\infty} \frac{y e^{w_n^{N/(N-1)}(y) - y} g( N^\frac{1-N}{N} \omega_{N-1}^{-\frac{1}{N}} w_n(y) )  }{ ( 1+N^\frac{1-N}{N} \omega_{N-1}^{-\frac{1}{N}} w_n(y) )^\frac{N}{2(N-1)} } \frac{ e^{ w_n^{N/(N-1)} (x) - x} g( N^\frac{1-N}{N} \omega_{N-1}^{-\frac{1}{N}} w_n(x) ) }{( 1+N^\frac{1-N}{N} \omega_{N-1}^{-\frac{1}{N}} w_n(x) )^\frac{N}{2(N-1)}} dx dy \\
&= \int_{0}^{n} \frac{y e^{w_n^{N/(N-1)}(y) - y} g( N^\frac{1-N}{N} \omega_{N-1}^{-\frac{1}{N}} w_n(y) )  }{ ( 1+N^\frac{1-N}{N} \omega_{N-1}^{-\frac{1}{N}} w_n(y) )^\frac{N}{2(N-1)} } \,dy   \int_n^\infty \frac{ e^{ w_n^{N/(N-1)} (x) - x} g( N^\frac{1-N}{N} \omega_{N-1}^{-\frac{1}{N}} w_n(x) ) }{( 1+N^\frac{1-N}{N} \omega_{N-1}^{-\frac{1}{N}} w_n(x) )^\frac{N}{2(N-1)}} dx \\
&= \tilde{I}_n^1  \times \tilde{I}_n^2,
\end{aligned}
$$
where, by (\ref{eq:existence-C-g-limit}) and choosing $\xi_n$ as Lemma \ref{xi-choice},
\begin{align*}
  &\tilde{I}_n^1 \ge   \int_{\xi_n n}^{n} \frac{y e^{w_n^{N/(N-1)}(y) - y} g( N^\frac{1-N}{N} \omega_{N-1}^{-\frac{1}{N}} w_n(y) )  }{ ( 1+N^\frac{1-N}{N} \omega_{N-1}^{-\frac{1}{N}} w_n(y) )^\frac{N}{2(N-1)} } \,dy \\
  & \ge \Bigl( \inf_{\xi_n n \le y \le n}\frac{y}{( 1+N^\frac{1-N}{N} \omega_{N-1}^{-\frac{1}{N}} w_n(y) )^\frac{N}{2(N-1)}}\Bigr) \left( {C_g} + o(1) \right) \int_{\xi_n n}^{n} e^{w_n^{N/(N-1)}(y)-y}\,dy\\
  &= \Bigl(\inf_{\xi_n n \le y \le n} \frac{y}{( 1+N^\frac{1-N}{N} \omega_{N-1}^{-\frac{1}{N}} w_n(y) )^\frac{N}{2(N-1)}}\Bigr)  \left( {C_g} + o(1) \right) \int_{\xi_n n}^{n} e^{\frac{1-\delta_n}{n^{1/(N-1)}} y^\frac{N}{N-1}-y}\,dy\\
  &= \frac{y}{( 1+N^\frac{1-N}{N} \omega_{N-1}^{-\frac{1}{N}} \frac{y}{n^{1/N}} (1-\delta_n)^{(N-1)/N} )^\frac{N}{2(N-1)}}|_{y=\xi_n n}  \left( {C_g} + o(1) \right) \int_{\xi_n n}^{n} e^{\frac{1-\delta_n}{n^{1/(N-1)}} y^\frac{N}{N-1}-y} \,dy\\
  &= \frac{\xi_n n}{ (1+ N^\frac{1-N}{N} \omega_{N-1}^{-\frac{1}{N}} \xi_n   (n(1-\delta_n))^{(N-1)/N} )^\frac{N}{2(N-1)}}  \left( {C_g} + o(1) \right) \\ & \times \int_{\xi_n n}^{n} e^{n^{-\frac{1}{N-1}} y (y^\frac{1}{N-1} - n^\frac{1}{N-1})} e^{-\frac{\delta_n}{n^{1/(N-1)}} y^{N/(N-1)}} \,dy \\
  &\ge \frac{\xi_n n e^{-n \delta_n}}{ (1+ N^\frac{1-N}{N} \omega_{N-1}^{-\frac{1}{N}} \xi_n   (n(1-\delta_n))^{(N-1)/N} )^\frac{N}{2(N-1)}}  \left( {C_g} + o(1) \right) \int_{\xi_n n}^{n} e^{n^{-\frac{1}{N-1}} y (y^\frac{1}{N-1} - n^\frac{1}{N-1})} \,dy \\
  &= \frac{ \xi_n n^{1-s}}{(1+ N^\frac{1-N}{N} \omega_{N-1}^{-\frac{1}{N}} \xi_n   (n(1-\delta_n))^{(N-1)/N} )^\frac{N}{2(N-1)}}  \left( {C_g} + o(1) \right) \int_{0}^{\xi_n n} e^{n^{-\frac{1}{N-1}} y (y^\frac{1}{N-1} - n^\frac{1}{N-1})} \,dy \\
  &\ge  \frac{ \xi_n n^{1-s}}{(1+ N^\frac{1-N}{N} \omega_{N-1}^{-\frac{1}{N}} \xi_n   (n(1-\delta_n))^{(N-1)/N} )^\frac{N}{2(N-1)}}  \left( {C_g} + o(1) \right) \int_{0}^{\xi_n n} e^{- y}\,dy \\
  &= \Bigl(\xi_n^\frac{N-2}{2(N-1)} N^\frac{1}{2} \omega_{N-1}^\frac{1}{2(N-1)} n^{1/2-s} +o(n^{1/2-s}) \Bigr)  \left(  C_g + o(1) \right)(1+o(1)) \\
  &= {C_g} \left({\xi_n}^\frac{N-2}{N-1} N \omega_{N-1}^\frac{1}{N-1}\right)^{1/2} n^{1/2-s} +o(n^{1/2-s}) \\
  &\geq C_g \left( (\frac{N-1}{N})^{N-2} N \omega_{N-1}^\frac{1}{N-1} \right)^\frac{1}{2} n^{1/2-s} +o(n^{1/2-s})
\end{align*} 
and, by (\ref{Ruf-Remainder-est}),
\begin{align*}
 \tilde{I}_n^2& \ge  \frac{ {C_g} + o(1) }{[1 + N^{(1-N)/N} \omega_{N-1}^{-1/N} \Bigl(  \frac{N-1}{ \left( n \left( 1 - \delta_n \right)^{1/N} \right)} \log \frac{
A_n + 1}{A_n} + \left( n \left( 1 - \delta_n \right) \right)^{(N-1)/N}\Bigr) ]^{N/(2(N-1))} }\\ & \times \int_n^{+\infty} e^{ w_n^{N/(N-1)}(x) - x} dx \\
              &= \Bigl( N^\frac{1}{2} \omega_{N-1}^\frac{1}{2(N-1)} n^{-\frac{1}{2}} + o(n^{-\frac{1}{2}})\Bigr) \Bigl( {C_g} + o(1) \Bigr) \Bigl( e^{1+\frac{1}{2}+\cdots+\frac{1}{N-1}}+O(n^{-s})\Bigr) \\
              &= {C_g} N^\frac{1}{2} \omega_{N-1}^\frac{1}{2(N-1)} e^{1+\frac{1}{2}+\cdots+\frac{1}{N-1}} n^{-\frac{1}{2}} + o(n^{-\frac{1}{2}}).
\end{align*}
Consequently,
\begin{align}\label{low-bound-I-n}
  \tilde{I}_n  & \nonumber \ge  \Bigl( C_g \left( (\frac{N-1}{N})^{N-2} N \omega_{N-1}^\frac{1}{N-1} \right)^\frac{1}{2} n^{1/2-s} +o(n^{1/2-s}) \Bigr)\\ & \nonumber \times \Bigl( {C_g} N^\frac{1}{2} \omega_{N-1}^\frac{1}{2(N-1)} e^{1+\frac{1}{2}+\cdots+\frac{1}{N-1}} n^{-\frac{1}{2}} + o(n^{-\frac{1}{2}}) \Bigr)  \\
  &\ge  (\frac{N-1}{N})^\frac{N-2}{2} N C_g^2 \omega_{N-1}^\frac{1}{N-1} e^{1+\frac{1}{2}+\cdots+\frac{1}{N-1}} n^{-s} +o(n^{-s}).
\end{align}
Moreover, fixing $\tau \in (0,\frac{1}{N}-s)$, we find that
$$
0 \le N^\frac{1-N}{N} \omega_{N-1}^{-\frac{1}{N}} w_n(x) \le   n^{\tau-\frac{1}{N}}
              \left( 1 - \delta_n \right)^{(N-1)/N} \le t_0 \quad \text{for $x \in [0,n^{\tau}]$ and $n$ sufficiently large,} 
$$
where $t_0$ is chosen as in Remark~\ref{remark-g-0-assumption} so that $g(t) \ge g(0)$ for $0 \le t \le t_0$. 
Consequently, 
\begin{align}
\tilde{J}_n &:= \int_0^n \int_y^n \frac{y e^{w_n^{N/(N-1)}(y) - y} g( N^\frac{1-N}{N} \omega_{N-1}^{-\frac{1}{N}} w_n(y) )  }{ ( 1+N^\frac{1-N}{N} \omega_{N-1}^{-\frac{1}{N}} w_n(y) )^\frac{N}{2(N-1)} } \frac{ e^{ w_n^{N/(N-1)} (x) - x} g( N^\frac{1-N}{N} \omega_{N-1}^{-\frac{1}{N}} w_n(x) ) }{( 1+N^\frac{1-N}{N} \omega_{N-1}^{-\frac{1}{N}} w_n(x) )^\frac{N}{2(N-1)}} dx dy \nonumber \\
&\geq \int_0^{n^\tau} \int_y^{n^\tau} \frac{ y e^{-y} g( N^\frac{1-N}{N} \omega_{N-1}^{-\frac{1}{N}} w_n(y) ) }{ ( 1+N^\frac{1-N}{N} \omega_{N-1}^{-\frac{1}{N}} w_n(y) )^\frac{N}{2(N-1)} } \frac{ e^{-x} g( N^\frac{1-N}{N} \omega_{N-1}^{-\frac{1}{N}} w_n(x) ) }{ ( 1+N^\frac{1-N}{N} \omega_{N-1}^{-\frac{1}{N}} w_n(x) )^\frac{N}{2(N-1)} } dxdy \nonumber \\
&\geq \frac{g^2(0)}{ [ 1 + N^\frac{1-N}{N} \omega_{N-1}^{-\frac{1}{N}} n^{\tau-\frac{1}{N}}
              \left( 1 - \delta_n \right)^{(N-1)/N} ]^\frac{N}{(N-1)} }  \int_0^{{n^\tau}} \int_y^{{n^\tau}} y e^{-x -y} dxdy \nonumber \\
&=  \frac{{g^2(0)}}{[ 1 + N^\frac{1-N}{N} \omega_{N-1}^{-\frac{1}{N}} n^{\tau-\frac{1}{N}}
              \left( 1 - \delta_n \right)^{(N-1)/N} ]^\frac{N}{(N-1)} }  \left( \frac{1}{4} +O(n^{\tau}) e^{-{n^\tau}}\right) \nonumber \\
            &= {g^2(0)} \left(1 + O(n^{\tau-\frac{1}{N}}) \right)  \left( \frac{1}{4} +O(n^{\tau}) e^{-{n^\tau}}\right)= \frac{{g^2(0)}}{4} + O(n^{\tau-\frac{1}{N}}) \qquad \text{as $\;n \to \infty$.}
   \label{eq:tilde-J-n-est}
\end{align}
Finally, we have 
\begin{equation}\label{low-bound-K-n-1}
\begin{aligned}
\tilde{K}_n : &= \int_n^{+\infty} \frac{y e^{w_n^{N/(N-1)}(y) - y} g( N^\frac{1-N}{N} \omega_{N-1}^{-\frac{1}{N}} w_n(y) )  }{ ( 1+N^\frac{1-N}{N} \omega_{N-1}^{-\frac{1}{N}} w_n(y) )^\frac{N}{2(N-1)} } \int_y^{+\infty} \frac{ e^{ w_n^{N/(N-1)} (x) - x} g( N^\frac{1-N}{N} \omega_{N-1}^{-\frac{1}{N}} w_n(x) ) }{( 1+N^\frac{1-N}{N} \omega_{N-1}^{-\frac{1}{N}} w_n(x) )^\frac{N}{2(N-1)}} dx dy \\
&\geq \frac{n}{ \left( 1 + N^\frac{1-N}{N} \omega_{N-1}^{-\frac{1}{N}} \left( \frac{N-1}{ \left( n \left( 1 - \delta_n \right) \right)^{1/N}}  \log \frac{A_n + 1}{ A_n } + \left( n \left( 1 - \delta_n \right) \right)^{(N-1)/N} \right) \right)^{N/(N-1)} } \\
&~~~~\times \int_n^{+\infty} \int_y^{+\infty} e^{ w_n^{N/(N-1)} (x) + w_n^{N/(N-1)} (y) - x - y}  g( N^\frac{1-N}{N} \omega_{N-1}^{-\frac{1}{N}} w_n(x) ) g( N^\frac{1-N}{N} \omega_{N-1}^{-\frac{1}{N}} w_n(y) ) dxdy,
\end{aligned}
\end{equation}
where
\begin{equation}\label{low-bound-K-n-2}
\begin{aligned}
&\frac{N-1}{ \left( n \left( 1 - \delta_n \right) \right)^{1/N}}  \log \frac{A_n + 1}{ A_n } + \left( n \left( 1 - \delta_n \right) \right)^{(N-1)/N}=\frac{(N-1)\log \frac{1}{A_n} + o(1)}{ \left( n \left( 1 - \delta_n \right) \right)^{1/N}} + \left( n \left( 1 - \delta_n \right) \right)^{(N-1)/N}\\
  &=\frac{\log (e n^{s} + o(n^s))    + o(1)}{ \left( n \left( 1 - \delta_n \right) \right)^{1/N}} + \left( n \left( 1 - \delta_n \right) \right)^{(N-1)/N}=\frac{1 + s \log n  + o(1)}{ \left( n \left( 1 - \delta_n \right) \right)^{1/N}} + \left( n \left( 1 - \delta_n \right) \right)^{(N-1)/N}\\
  &=\left( n \left( 1 - \delta_n \right) \right)^{(N-1)/N} +o(1),
\end{aligned}
\end{equation}
and hence
\begin{equation}\label{low-bound-K-n-3}
\begin{aligned}
  &\frac{n}{ \left( 1 + N^\frac{1-N}{N} \omega_{N-1}^{-\frac{1}{N}} \left( \frac{N-1}{ \left( n \left( 1 - \delta_n \right) \right)^{1/N}}  \log \frac{A_n + 1}{ A_n } + \left( n \left( 1 - \delta_n \right) \right)^{(N-1)/N} \right) \right)^{N/(N-1)} }\\
  &=\frac{n}{\left(1 + N^\frac{1-N}{N} \omega_{N-1}^{-\frac{1}{N}} \left ( \left( n \left( 1 - \delta_n \right) \right)^{(N-1)/N} +o(1) \right)\right)^{N/(N-1)}} \\
 &= N \omega^\frac{1}{N-1}_{N-1} + O(n^{-\frac{N-1}{N}}),
\end{aligned}
\end{equation}
which implies that
\begin{equation}\label{low-bound-K-n-4}
\begin{aligned}
\tilde{K}_n &\ge \bigl(N \omega^\frac{1}{N-1}_{N-1} + O(n^{-\frac{N-1}{N}}) \bigr) \times \frac{1}{2} \times \left( \int_n^{+\infty} e^{ w_n^{N/(N-1)} (x) - x } g( N^\frac{1-N}{N} \omega_{N-1}^{-\frac{1}{N}} w_n(x) ) dx \right)^2 \\
&\geq \bigl(\frac{N}{2} \omega^\frac{1}{N-1}_{N-1} +O(n^{-\frac{N-1}{N}}) \bigr) \bigl(e^{1+\frac{1}{2}+\cdots+\frac{1}{N-1}} - n^{-s} + o(n^{-s})\bigr)^2 \left( {C_g} + o(1) \right)^2 \\
  &= \bigl(\frac{N}{2} \omega^\frac{1}{N-1}_{N-1} + O(n^{-\frac{N-1}{N}}) \bigr) \bigl( e^{2(1+\frac{1}{2}+\cdots+\frac{1}{N-1})} - 2e^{1+\frac{1}{2}+\cdots+\frac{1}{N-1}} n^{-s} + o(n^{-s} ) \bigr) \left( {C_g} + o(1) \right)^2 \\
\end{aligned}
\end{equation}
as $n \to \infty$.
Thus,
\begin{equation}
  \label{eq:liminf-tilde-K-n-bound}
\liminf_{n \to +\infty} \tilde{K}_n \geq \frac{N}{2} \omega^\frac{1}{N-1}_{N-1} C_g^2 e^{2(1+\frac{1}{2}+\cdots+\frac{1}{N-1})}.  
\end{equation}
Now, combining estimates \eqref{low-bound-I-n}, (\ref{eq:tilde-J-n-est}) and 
\eqref{eq:liminf-tilde-K-n-bound}, we conclude that
    \begin{equation*}
    \begin{aligned}
\liminf_{n \to \infty} \int_0^\infty \frac{y e^{w_n^{N/(N-1)}(y) - y} g( N^\frac{1-N}{N} \omega_{N-1}^{-\frac{1}{N}} w_n(y) )  }{ ( 1+N^\frac{1-N}{N} \omega_{N-1}^{-\frac{1}{N}} w_n(y) )^\frac{N}{2(N-1)} } \int_{y}^\infty & \frac{ e^{ w_n^{N/(N-1)} (x) - x} g( N^\frac{1-N}{N} \omega_{N-1}^{-\frac{1}{N}} w_n(x) ) }{( 1+N^\frac{1-N}{N} \omega_{N-1}^{-\frac{1}{N}} w_n(x) )^\frac{N}{2(N-1)}} dx dy \\
&\ge  \frac{{g^2(0)}}{4} + \frac{N}{2} \omega^\frac{1}{N-1}_{N-1} C_g^2 e^{2(1+\frac{1}{2}+\cdots+\frac{1}{N-1})}.
\end{aligned}
\end{equation*}
Combining this with (\ref{upper_bound}) which also holds for the sequence $(w_n)_n$ defined in (\ref{def-w-n}), we obtain (\ref{sharp-lim-SCS-transformed}). This finishes the proof of Proposition~\ref{prop-transformed-sharp-lim}(ii).\\

To prove Part (iii) of Proposition~\ref{prop-transformed-sharp-lim}, we now assume (\ref{g-2-prime}), which implies that there exist constants $C_2,t_0>0$ with 
$$
g(t) \ge {C_g} + C_2 t^{-\rho} \qquad \text{for $t \ge t_0$}.
$$
Since for $n$ sufficiently large we have
$$
N^\frac{1-N}{N} \omega_{N-1}^{-\frac{1}{N}} w_n(x)\ge t_0 \qquad \text{for $x \ge n$},
$$
the estimate \eqref{low-bound-K-n-4} can be improved, for $n$ large, as
\begin{align}
\tilde{K}_n &\ge \bigl(N \omega^\frac{1}{N-1}_{N-1} +O(n^{-\frac{N-1}{N}}) \bigr) \times \frac{1}{2} \times \left( \int_n^{+\infty} e^{ w_n^{N/(N-1)} (x) - x } g( N^\frac{1-N}{N} \omega_{N-1}^{-\frac{1}{N}} w_n(x) ) dx \right)^2 \nonumber \\
&\geq \bigl(\frac{N}{2} \omega^\frac{1}{N-1}_{N-1} + O(n^{-\frac{N-1}{N}})\bigr) \bigl(e^{1+\frac{1}{2}+\cdots+\frac{1}{N-1}} - n^{-s} + o(n^{-s})\bigr)^2 \nonumber \\ & \times \left( {C_g} + C_2 (N^{N-1} \omega_{N-1})^\frac{\rho}{N} \inf_{ x \geq n } w_n^{-\rho}(x) \right)^2 \nonumber \\
  &= \nonumber \bigl(\frac{N}{2} \omega^\frac{1}{N-1}_{N-1} + O(n^{-\frac{N-1}{N}}) \bigr) \bigl( e^{2(1+\frac{1}{2}+\cdots+\frac{1}{N-1})} - 2 e^{1+\frac{1}{2}+\cdots+\frac{1}{N-1}} n^{-s} + o(n^{-s}) \bigr) \times \\
  &\times \left( {C_g} + C_2 (N^{N-1} \omega_{N-1})^\frac{\rho}{N} \left( \left( n \left( 1 - \delta_n \right) \right)^\frac{N-1}{N} + o(1) \right)^{-\rho} \right)^2 \nonumber \\
  &=\frac{N}{2} \omega_{N-1}^\frac{1}{N-1} C_g^2 e^{2(1+\frac{1}{2}+\cdots+\frac{1}{N-1})}\nonumber \\ & + N \omega_{N-1}^\frac{1}{N-1} ( N^{N-1} \omega_{N-1} )^\frac{\rho}{N} {C_g} C_2 e^{2(1+\frac{1}{2}+\cdots+\frac{1}{N-1})} n^{-(N-1)\rho/N}+ O(n^{-s}).\label{low-bound-K-n-5}
\end{align}
Combining \eqref{low-bound-I-n}, (\ref{eq:tilde-J-n-est}) and (\ref{low-bound-K-n-5}), we conclude that 
\begin{align}\label{limit-from-above}
  \int_0^\infty & \frac{y e^{w_n^{N/(N-1)}(y) - y} g( N^\frac{1-N}{N} \omega_{N-1}^{-\frac{1}{N}} w_n(y) )  }{ ( 1+N^\frac{1-N}{N} \omega_{N-1}^{-\frac{1}{N}} w_n(y) )^\frac{N}{2(N-1)} } \int_{y}^\infty \frac{ e^{ w_n^{N/(N-1)} (x) - x} g( N^\frac{1-N}{N} \omega_{N-1}^{-\frac{1}{N}} w_n(x) ) }{( 1+N^\frac{1-N}{N} \omega_{N-1}^{-\frac{1}{N}} w_n(x) )^\frac{N}{2(N-1)}} dx dy\\
&  \geq \frac{{g^2(0)}}{4} + O(n^{\tau-\frac{1}{N}}) +  \frac{N}{2} \omega_{N-1}^\frac{1}{N-1} C_g^2 e^{2(1+\frac{1}{2}+\cdots+\frac{1}{N-1})} \\
&+ N \omega_{N-1}^\frac{1}{N-1} ( N^{N-1} \omega_{N-1} )^\frac{\rho}{N} {C_g} C_2 e^{2(1+\frac{1}{2}+\cdots+\frac{1}{N-1})} n^{-(N-1)\rho/N}+ O(n^{-s}) \nonumber\\
&= \frac{{g^2(0)}}{4} +  \frac{N}{2} \omega_{N-1}^\frac{1}{N-1} C_g^2 e^{2(1+\frac{1}{2}+\cdots+\frac{1}{N-1})} \\
&+ N \omega_{N-1}^\frac{1}{N-1} ( N^{N-1} \omega_{N-1} )^\frac{\rho}{N} {C_g} C_2 e^{2(1+\frac{1}{2}+\cdots+\frac{1}{N-1})} n^{-(N-1)\rho/N} + O(n^{-s})\nonumber
\end{align}
as $n \to \infty$. Here we used in the last step that we have chosen $\tau <\frac{1}{N}-s$. Since $\rho<\frac{1}{N-1}$, we can assume that $s
\in (\frac{(N-1) \rho}{N}, \frac{1}{N})$ was chosen here. We then conclude that (\ref{sharp-lim-SCS-above-transformed}) holds for $n$ large, as claimed. This finishes the proof of Proposition~\ref{prop-transformed-sharp-lim}(iii).

\end{document}